\newtheorem{theorem}{Theorem}[section]
\newtheorem{lemma}{Lemma}[section]
\newtheorem{proposition}{Proposition}[section]
\newtheorem{assumption}{Assumption}[section]
\newtheorem{remark}{Remark}[section]
\DeclareMathOperator{\bd}{bd}
\DeclareMathOperator{\inte}{int}
\pgfplotsset{compat=newest}
\global\let\tikz@ensure@dollar@catcode=\relax
\title[Convergence rate of APM]{Convergence rate of alternating projection method for the intersection of an affine subspace and the second-order cone}
\author[H. Ochiai]{Hiroyuki Ochiai}
\address[H. Ochiai]{Institute of Mathematics for Industry, Kyushu University, 744 Motooka, Nishi-ku, Fukuoka 819-0395, Japan}
\email{\ttfamily{ochiai@imi.kyushu-u.ac.jp}}
\author[Y. Sekiguchi]{Yoshiyuki Sekiguchi}
\address[Y. Sekiguchi]{Graduate School of Marine Science and Technology, Etchujima 2-1-8, Koto-ku, Tokyo 135-8533, Japan}
\email{\ttfamily{yoshi-s@kaiyodai.ac.jp}}
\author[H. Waki]{Hayato Waki}
\address[H. Waki]{Institute of Mathematics for Industry, Kyushu University, 744 Motooka, Nishi-ku, Fukuoka 819-0395, JAPAN}
\email{\ttfamily{waki@imi.kyushu-u.ac.jp}}
\subjclass[2010]{Primary 41A25, 90C25; Secondary 65K10}
\keywords{Alternating projection method, second-order cone, exact convergence rate, H{\"o}lder regularity}
\begin{document}

\maketitle
\begin{abstract}
We study the convergence rate of the alternating projection method (APM) applied to the intersection of an affine subspace and the second-order cone. We show that when they intersect non-transversally, the convergence rate is $O(k^{-1/2})$, where $k$ is the number of iterations of the APM. In particular, when the intersection is not at the origin or forms a half-line with the origin as the endpoint, the obtained convergence rate can be exact because a lower bound of the convergence rate is evaluated. These results coincide with the worst-case convergence rate obtained from the error bound discussed in \cite{Borwein2014} and \cite{Drusvyatskiy2017}. Moreover, we consider the convergence rate of the APM for the intersection of an affine subspace and the product of two second-order cones. We provide an example that the worst-case convergence rate of the APM is better than the rate expected from the error bound for the example. 
\end{abstract}
\section{Introduction and summary}

The alternating projection method (APM) is a well-known algorithm for finding a point in the intersection of multiple sets. There is a vast amount of literature on the theory and applications of the APM, see, e.g., \cite{Bauschke1996} and \cite{Noll2016} and the references therein. For example, it is shown in \cite{Gubin1966} that the sequences generated by the APM for the intersection of multiple convex sets converge to a point of the intersection. It is also known  in \cite[Theorem 2]{Gubin1966} that the convergence rate is linear when multiple convex sets intersect transversally. It is proved in \cite{Lewis2008} that the APM for the intersection of non-convex sets converges with a linear rate. 

When multiple convex sets intersect non-transversally, the convergence rate of the sequence generated by the APM is expected to be sublinear. The convergence rate of the APM has been analyzed in \cite{Borwein2014}. In particular, the authors proved that H{\"o}lder regularity holds for convex sets composed of convex polynomials and provided the convergence rate by using the regularity. In \cite{Luke2018} and \cite{Noll2021}, the convergence rate of the APM for non-convex sets is given by extending the regularity theory. 

In these studies, via the regularity theory, inequalities called error bounds play an essential role in analyzing the worst-case convergence rate.  However, the error bounds cannot be used to evaluate the convergence rate from below. This is the reason why the evaluation of {\itshape the exact convergence rate} is considered to be complicated even under the assumption of convexity. Indeed, there are a few results on the exact convergence rate of the APM. For instance, in \cite{Ochiai2021}, the exact convergence rate of the APM is obtained for particular sets. In more detail, the authors analyze the APM for the intersection of  a line or linear subspace and a convex set determined by a convex polynomial and  derive the exact convergence rate from the algebraic recursion formula that determines the sequence generated by the APM. It is also introduced that the convergence rate of the APM for the intersection of a line and  a convex set defined by two convex polynomials may vary depending on the initial points. 

\subsection*{Purpose and contributions}
We discuss the convergence rate of the APM for the intersection of an affine subspace and the second-order cone and find the exact convergence rate. As mentioned, it is known  that the APM converges at a linear rate when the affine subspace and the second-order cone intersect transversally. Therefore, we are interested in the convergence rate of the APM when they intersect non-transversally. 

A study related to this problem setting is known in \cite{Drusvyatskiy2017, Liu2022}. It is shown in \cite[Theorems 2.2 and 2.4]{Drusvyatskiy2017} that for the intersection of an affine subspace $H$ and the positive semidefinite cone $\mathbb{S}^n_+$, if the pair $(H, \mathbb{S}^n_+)$ satisfies the $\gamma$-H{\"o}lder regularity holds for some $\gamma\in (0, 1)$, then the sequence of the APM for the intersection converges with the sublinear rate $O(k^{-1/(2\gamma^{-1}-2)})$, where $k$ is the number of iterations of the APM, and that the convergence rate is linear when $\gamma=1$. In particular, the authors mention that  $\gamma = 1/2^d$ holds, where $d$ is the minimal iteration number of facial reduction \cite{Borwein1980, Borwein1981} and is called {\itshape singularity degree}. This equality is obtained from the H{\"o}lder regularity in \cite{Sturm2000}. Since the second-order cone can be described using positive semidefinite matrices (see, e.g., \cite[page 7]{Alizadeh2003}),  the result in \cite[Theorem 2.4]{Drusvyatskiy2017} is directly applicable to this case. Hence we can see that the APM converges at the worst-case rate $O(k^{-1/(2^{d+1}-2)})$, where $d$ is the singularity degree determined by this intersection when an affine subspace and the second-order cone intersect non-transversally. Furthermore, a slightly improved convergence rate is provided in \cite[Theorem 5.5]{Liu2022} by introducing the notion of consistent error bound functions.

The contribution of this paper is to provide a more precise analysis of the convergence rate of the APM for this problem setting. There are three possible cases where an affine subspace and the second-order cone intersect non-transversally: (i) the intersection is at the origin only, (ii) the intersection is at a point other than the origin, and (iii) the intersection of an affine subspace and the second-order cone is a half-line  with the origin as the endpoint. 

In case (i), the sequence generated by the APM converges linearly at the origin. Indeed, the linear regularity holds in this case. (It is also mentioned in \cite[Section 2]{Sturm2000} that linear regularity holds even when the intersection of the semidefinite cone and an affine subspace is at the origin only.) Combining this fact and \cite[Proposition 4.2]{Borwein2014}, we can also show that APM converges linearly. This fact is also obtained from \cite[Theorem 2.2]{Drusvyatskiy2017}.

We evaluate more precise convergence rates in cases (ii) and (iii). For this, we derive algebraic recursions on the sequence generated by the APM. Our result is summarized in Theorem ~\ref{thm:convrate}, which is described in more detail as follows: 
%The following three contributions can be obtained: 
\begin{enumerate}[label=(\roman*)]
\setcounter{enumi}{1}
    %\item The sequence generated by the APM converges linearly at the origin. The linear regularity holds in this case. (It is also mentioned in \cite[Section 2]{Sturm2000} that linear regularity holds even when the intersection of the semidefinite cone and an affine subspace is at the origin only.) Combining this fact and \cite[Proposition 4.2]{Borwein2014}, we can also show that APM converges linearly. This fact is also obtained from \cite[Theorem 2.2]{Drusvyatskiy2017}. 
    
    %Interestingly, in this setting, the singularity degree is 1 and then the worst-case convergence rate is $O(k^{-1/2})$. From this result, we can see that the convergence rate we obtained is better. 

    %This difference is due to $\gamma$ in the H{\"o}lder regularity. The $\gamma$ obtained from the singularity degree is 1/2.  On the other hand, the linear regularity holds in this case. (It is also mentioned in \cite[Section 2]{Sturm2000} that linear regularity holds even when the intersection of the semidefinite cone and an affine subspace is at the origin only.) Combining this fact and \cite[Proposition 4.2]{Borwein2014}, we can also show that APM converges linearly. This fact is also obtained from \cite[Theorem 2.2]{Drusvyatskiy2017}. 

    \item We prove that the sequence generated by the APM converges to a point with the exact rate $\Theta(k^{-1/2})$. More precisely, the sequence $\{\bm{u}_k\}$ generated by the APM satisfies the following inequalities: there exists $d_1, d_2>0$ and $\tilde{k}\in\mathbb{N}$ such that for all $k\ge \tilde{k}$, 
    \[
    d_1k^{-1/2}\le \|\bm{u}_k -\bm{u}_*\| \le d_2 k^{-1/2},  
    \]
    where $\bm{u}_*$ is a convergent point of the sequence $\{\bm{u}_k\}$. Note that we can also derive a lower bound for the convergence rate. We call such a convergence rate {\itshape the exact convergence rate}. Since the singularity degree is 1, the result coincides with the worst-case convergence rate obtained in \cite[Theorem 2.2]{Drusvyatskiy2017}.
    
    \item We prove that for certain initial points, the sequence reaches the origin in one iteration, and for other initial points, the sequence converges to a point on the half-line with the exact rate $\Theta(k^{-1/2})$. %Also, we present an example in which %there is no improvement in the error bound as in the first contribution, 
    %$1/2$-H{\"o}lder regularity holds, but the linear regularity fails. 
    Our exact convergence rate is equal to the worst-case convergence rate obtained in \cite[Proposition 4.2]{Borwein2014} and \cite[Theorem 2.2]{Drusvyatskiy2017}. 

%    \item We derive the convergence rate of the APM for several cases where the direct product of two second-order cones and an affine subspace intersect non-transversally. In these cases, we can prove that the convergence is linear convergence or $O(k^{-1/2})$. 
    
%In this setting, we can construct an example where the singularity degree is 2. In this example, the convergence rate of the APM is expected to be $O(k^{-1/6})$ or faster from \cite{Drusvyatskiy2017}. In fact, we can also prove that the convergence rate of the APM for this example is linear convergence. 
\end{enumerate}

 Another contribution is to consider the convergence rate of the APM for the intersection of an affine subspace and the  product set of two second-order cones. In particular, we provide two examples. The singularity degrees of those examples are two, and the worst-case convergence rate $O(k^{-1/6})$ is tight in the first example, which coincides with the worst-case convergence rate induced from the error bound with the singularity degree, while it is not tight in the second example. Indeed, the convergence rate in the second example is $O(k^{-1/2})$. The convergence rate of the second example is better than the worst-case convergence rate expected from the error bound with the singularity degree. 

 \subsection*{Future work}
 As we will see in the examples of the convergence rate of the APM for the intersection of an affine subspace and the product set of two second-order cones, a sequence of the APM can be generated by more than one recurrent formula. Unlike the case of the intersection of an affine subspace with a single second-order cone, this makes the analysis more difficult. Characterization of the affine subspace $H$ in such a way that the estimation of the convergence rate by the error bound determined by the singularity degree is tight remains as future work. Also, in the second example, a more precise analysis is used to obtain the exact convergence rate of the APM. The generalization of the analytical methods used for the examples is one of our future tasks. 

\subsection*{Organization}
We introduce the basic notation on the second-order cone, a brief introduction of singularity degree, and inequalities on the sequences for the convergence rate in Section~\ref{sec:preliminary}. In Section~\ref{sec:convergence}, we provide a proof of Theorem~\ref{thm:convrate}, summarizing our main result. For this, we consider three cases: (i) an affine subspace and the second-order cone intersect at the origin only, (ii) an affine subspace and the second-order cone intersect at a point other than the origin non-transversally, and (iii)  an affine subspace and the second-order cone intersect a half-line with the origin as the endpoint. We provide two examples where the convergence rate for the intersection of an affine subspace and the  product of two second-order cones is discussed in Section~\ref{sec:product}. Since the analysis is complicated, the details and proofs of the convergence rate of the APM are provided in Appendices~\ref{app:convrateEx1} and \ref{app:property}.

%In Section~\ref{sec:discussion}, we discuss a slight difference between \cite{Drusvyatskiy2017} and Theorem~\ref{thm:convrate}. 

\section{Preliminary}\label{sec:preliminary}
\subsection{Notation and symbols}
For $a\in\mathbb{R}$ and $\bm{b}\in\mathbb{R}^n$, we abbreviate $\begin{bmatrix}a\\ \bm{b}\end{bmatrix}$ as $(a; \bm{b})$. Similarly, for $\bm{x}\in\mathbb{R}^n$ and $\bm{y}\in\mathbb{R}^m$, we write $\begin{bmatrix}\bm{x}\\ \bm{y}\end{bmatrix}$ as $(\bm{x}; \bm{y})$. We also define $\hat{\bm{x}}=(x_0; -\bm{x}_1)$ for $\bm{x} = (x_0; \bm{x}_1)$. 

We define the set $\mathcal{K}_{n+1}$ as 
\[
\mathcal {K}_{n+1} = \left\{
(x_0; \bm{x}_1)\in\mathbb{R}^{n+1} : \|\bm{x}_1\|_2\le x_0
\right\}
\]
We call it the second-order cone. We also write $\mathcal{K}$ if the dimension can be determined from the context. We write $\inte\mathcal{K}$ for the interior of the second-order cone $\mathcal{K}$ and $\bd\mathcal{K}$ for its boundary. In other words, 
\begin{align*}
    \inte\mathcal{K}&=\left\{(x_0; \bm{x}_1)\in\mathbb{R}^{n+1} : \|\bm{x}_1\|_2 < x_0\right\}, 
    \bd\mathcal{K}&=\left\{(x_0; \bm{x}_1)\in\mathbb{R}^{n+1} : \|\bm{x}_1\|_2= x_0\right\}. 
\end{align*}

The following lemma shows a property of points on the boundary of the second-order cone. The proof is given in Appendix~\ref{sec:proofoflemma}. 

\begin{lemma}\label{lemma:socp0}
Assume that $\bm{x}\in\bd\mathcal{K}\setminus\{\bm{0}\}$. If $\bm{y}\in\mathcal{K}$ is orthogonal to $\bm{x}$, then $\bm{y}$ is formed as $\bm{y} = \alpha \hat{\bm{x}}$ for some $\alpha \ge 0$. 
\end{lemma}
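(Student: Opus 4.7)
The plan is to unpack the three hypotheses componentwise and observe that they force Cauchy--Schwarz to hold with equality. Writing $\bm{x} = (x_0; \bm{x}_1)$ and $\bm{y} = (y_0; \bm{y}_1)$, the assumption $\bm{x} \in \bd\mathcal{K} \setminus \{\bm{0}\}$ gives $\|\bm{x}_1\| = x_0$ with $x_0 > 0$ (since a zero $x_0$ would force $\bm{x}_1 = \bm{0}$). Cone membership of $\bm{y}$ gives $\|\bm{y}_1\| \le y_0$, and orthogonality gives $x_0 y_0 + \bm{x}_1 \cdot \bm{y}_1 = 0$.

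The key step is the chain
\[
x_0 y_0 \;=\; -\,\bm{x}_1 \cdot \bm{y}_1 \;\le\; \|\bm{x}_1\|\,\|\bm{y}_1\| \;=\; x_0\,\|\bm{y}_1\| \;\le\; x_0 y_0,
\]
which forces every inequality to be an equality. From the Cauchy--Schwarz equality I conclude that $\bm{y}_1$ is a scalar multiple of $\bm{x}_1$, and from $\|\bm{y}_1\| = y_0$ I conclude that $\bm{y}$ itself lies on $\bd\mathcal{K}$. Since $\bm{x}_1 \cdot \bm{y}_1 = -x_0 y_0 \le 0$, the scalar multiplier is nonpositive; writing $\bm{y}_1 = -(y_0/x_0)\bm{x}_1$ and setting $\alpha := y_0/x_0 \ge 0$ yields
\[
\bm{y} \;=\; \bigl(y_0;\ -\tfrac{y_0}{x_0}\bm{x}_1\bigr) \;=\; \tfrac{y_0}{x_0}\bigl(x_0;\ -\bm{x}_1\bigr) \;=\; \alpha\,\hat{\bm{x}},
\]
which is the desired conclusion. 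I do not anticipate a real obstacle; the only care needed is to check the degenerate case $y_0 = 0$, where the chain above already forces $\bm{y}_1 = \bm{0}$ and hence $\bm{y} = 0 \cdot \hat{\bm{x}}$, so the formula still holds with $\alpha = 0$.
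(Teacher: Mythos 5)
Your proof is correct and follows essentially the same route as the paper: both use orthogonality plus Cauchy--Schwarz to force equality throughout (hence $y_0=\|\bm{y}_1\|$ and $\bm{y}_1$ proportional to $\bm{x}_1$), then determine the sign of the multiplier. Your direct reading of the sign from $\bm{x}_1\cdot\bm{y}_1\le 0$ is a slightly cleaner finish than the paper's short contradiction argument, but the substance is identical.
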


For any $(x_0; \bm{x}_1)\in\mathbb{R}^{n+1}$, we can write $\bm{x}$ as follows:
\begin{align}
\label{spectraldec}
(x_0; \bm{x}_1) &= \lambda_1 \bm{e}_1 + \lambda_2 \bm{e}_2, \\
\nonumber \lambda_1&= \frac{x_0 +\|\bm{x}_1\|_2}{\sqrt{2}}, \lambda_2 = \frac{x_0-\|\bm{x}_1\|_2}{\sqrt{2}}, \\
\nonumber \bm{e}_1 &= \frac{1}{\sqrt{2}}\left(
1; \frac{\bm{x}_1}{\|\bm{x}_1\|_2}\right), \bm{e}_2 =\frac{1}{\sqrt{2}}\left(
1; -\frac{\bm{x}_1}{\|\bm{x}_1\|_2}\right). 
\end{align}
This is called the spectral decomposition of $\bm{x}$ (with respect to $\mathcal{K}$). It is known that the projection $P_{\mathcal{K}}(\bm{x})$ on $\mathcal{K}$ can be written as follows : 

\begin{theorem}(\cite[Theorem 3.3.6]{Bauschke1990})\label{thm:pprojsocp}
For any $\bm{x}=(x_0; \bm{x}_1)\in\mathbb{R}^{n+1}$, the following holds: 
\[
P_{\mathcal{K}}(\bm{x}) = \left\{
\begin{array}{cl}
(x_0; \bm{x}_1) & \mbox{ if } x_0\ge \|\bm{x}_1\|_2 \ (i.e., \bm{x}\in\mathcal{K}), \\
(0; \bm{0}) & \mbox{ if } -x_0\ge \|\bm{x}_1\|_2 \ (i.e., -\bm{x}\in\mathcal{K}), \\
\lambda_1 \bm{e}_1 & \mbox{otherwise} 
\end{array}
\right.
\]
where $\lambda_1$ and $\bm{e}_1$ correspond to the first component of the spectral decomposition \eqref{spectraldec} of $(x_0; \bm{x}_1)$.
\end{theorem}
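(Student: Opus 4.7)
The plan is to appeal to the standard characterization of the metric projection onto a closed convex cone (Moreau's theorem): for such a cone $K$, a point $\bm{p}$ equals $P_{K}(\bm{x})$ if and only if $\bm{p}\in K$, $\bm{x}-\bm{p}\in K^{\circ}$ (the polar cone), and $\langle \bm{p},\bm{x}-\bm{p}\rangle = 0$. The one additional ingredient I need is the self-duality of the second-order cone, $\mathcal{K}^{\circ}=-\mathcal{K}$. This is a short Cauchy--Schwarz computation: for $\bm{u}=(u_0;\bm{u}_1)$ and $\bm{v}=(v_0;\bm{v}_1)$ in $\mathcal{K}$, $\langle \bm{u},\bm{v}\rangle = u_0 v_0+\langle \bm{u}_1,\bm{v}_1\rangle \ge u_0 v_0 -\|\bm{u}_1\|_2\|\bm{v}_1\|_2\ge 0$, giving $\mathcal{K}\subseteq -\mathcal{K}^{\circ}$; the reverse inclusion follows by testing any $\bm{w}\notin \mathcal{K}$ against an explicit element of $\mathcal{K}$ on which $\langle \bm{w},\cdot\rangle$ is strictly positive.

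The first two branches then reduce to immediate verifications. If $\bm{x}\in\mathcal{K}$, then $\bm{p}=\bm{x}$ trivially satisfies all three Moreau conditions. If $-\bm{x}\in\mathcal{K}$, equivalently $\bm{x}\in\mathcal{K}^{\circ}$, then $\bm{p}=\bm{0}$ yields $\bm{p}\in\mathcal{K}$, $\bm{x}-\bm{p}=\bm{x}\in\mathcal{K}^{\circ}$, and $\langle\bm{p},\bm{x}-\bm{p}\rangle = 0$. For the remaining case $|x_0|<\|\bm{x}_1\|_2$, I use the spectral decomposition \eqref{spectraldec}: the strict inequality forces $\lambda_1>0>\lambda_2$, and a direct calculation from the formulas shows $\bm{e}_1,\bm{e}_2\in\bd\mathcal{K}\setminus\{\bm{0}\}$ with $\langle \bm{e}_1,\bm{e}_2\rangle = 0$. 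Setting $\bm{p}=\lambda_1\bm{e}_1$, the three Moreau conditions fall out: $\bm{p}\in\mathcal{K}$ since $\lambda_1\ge 0$; $\bm{x}-\bm{p}=\lambda_2\bm{e}_2\in -\mathcal{K}=\mathcal{K}^{\circ}$ since $-\lambda_2\ge 0$; and $\langle\bm{p},\bm{x}-\bm{p}\rangle = \lambda_1\lambda_2\langle \bm{e}_1,\bm{e}_2\rangle = 0$.

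I do not anticipate any serious obstacle here. The whole argument boils down to the self-duality identity together with the orthogonality $\bm{e}_1\perp\bm{e}_2$, both entirely elementary. The spectral decomposition formula in effect already packages the projection, so once the two properties of $\bm{e}_1$ and $\bm{e}_2$ are confirmed the Moreau conditions align mechanically. An alternative route would be to solve the minimization $\min\{\|\bm{x}-\bm{y}\|_2^{2}:\bm{y}\in\mathcal{K}\}$ directly via KKT conditions on the constraint $x_0^{2}-\|\bm{x}_1\|_2^{2}\ge 0$, but that requires case analysis on active constraints and is less transparent than the Moreau approach.
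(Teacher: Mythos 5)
Your argument is correct. Note, however, that the paper itself gives no proof of this statement: it is quoted verbatim from \cite[Theorem 3.3.6]{Bauschke1990}, so there is no in-paper argument to compare against. Your route via Moreau's decomposition is the standard self-contained one, and all three branches check out: the self-duality $\mathcal{K}^{\circ}=-\mathcal{K}$, the orthogonality $\langle\bm{e}_1,\bm{e}_2\rangle=0$ with $\bm{e}_1,\bm{e}_2\in\bd\mathcal{K}$, and the sign pattern $\lambda_1>0>\lambda_2$ in the ``otherwise'' case (where $|x_0|<\|\bm{x}_1\|_2$ guarantees $\bm{x}_1\neq\bm{0}$, so the spectral decomposition is well defined) together verify the Moreau conditions exactly as you say. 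One small slip in the self-duality step: to prove $\mathcal{K}^{*}\subseteq\mathcal{K}$ you want, for each $\bm{w}\notin\mathcal{K}$, an element of $\mathcal{K}$ on which $\langle\bm{w},\cdot\rangle$ is strictly \emph{negative} (e.g.\ $(\|\bm{w}_1\|_2;-\bm{w}_1)$ when $\bm{w}_1\neq\bm{0}$, and $(1;\bm{0})$ when $\bm{w}_1=\bm{0}$, $w_0<0$), not strictly positive; this is a wording error rather than a gap, since the witness you have in mind clearly does the job.
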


\subsection{Intersection of an affine subspace and the second-order cone}

In this paper, except for Section ~\ref{sec:product}, we assume the following. 
\begin{assumption}\label{assumption}
\leavevmode
\makeatletter
\@nobreaktrue
\makeatother
\begin{itemize}
\item The dimension $n+1$ of $\mathcal{K}$ is more than 2.  
\item The intersection of  an affine subspace $H$  and the second-order cone $\mathcal{K}$ is nonempty, but $H\cap \inte\mathcal{K}$ is empty. 
\end{itemize}
\end{assumption}

If $n+1\le 2$, then $\mathcal{K}$ is a polyhedral cone. Thus,  by combining \cite[Proposition 4.2]{Borwein2014} with Hoffman's error bound, see, e.g., \cite[Proposition 5.25]{Bauschke1996}, we can prove that the convergence rate of the APM is linear. 

When the intersection $H\cap \mathcal{K}$ consists of a single point $\bm{u}_*$,  we can write the affine subspace $H$ as follows: 
\[
H = \left\{
\bm{u}_* + \bm{B}\bm{t} : \bm{t}\in\mathbb{R}^p
\right\}, 
\]
where, $\bm{B}\in\mathbb{R}^{(n+1)\times p}$ satisfies $\bm{B}^T\bm{B} = \bm{I}_p$. Moreover, we write $\bm{b}^T\in\mathbb{R}^{1\times p}$ for the vector consisting of the first row of the matrix $\bm{B}$ and $\bm{B}_1\in\mathbb{R}^{n\times p}$ for the matrix formed by excluding the first row $\bm{b}^T$ from $\bm{B}\in\mathbb{R}^{(n+1)\times p}$. Thus, we can write,  
\[
\bm{B} = \begin{bmatrix}
\bm{b}^T\\
\bm{B}_1
\end{bmatrix}
\]
Note that for $p=1$, we do not use the bold font for the first row of $\bm{B}$, i.e., $\bm{B} = \begin{bmatrix}
b\\
\bm{B}_1
\end{bmatrix}$. 

When the intersection $H\cap\mathcal{K}$ consists of multiple points, i.e., $H\cap \mathcal{K} =\{\alpha \bm{d} : \alpha \ge 0\}$, where $\bm{d} \in\mathcal{K}\setminus\{\bm{0}\}$, then we write the affine subspace $H$ as $H=\{\bm{B}\bm{t} : \bm{t}\in\mathbb{R}^p\}$. The assumptions and notations about $\bm{B}$ are the same as above. 

We use the following facts to analyze the convergence rate of the APM. Their proofs are given in Appendix \ref{sec:proofoflemma}. 
\begin{lemma}\label{lemma:matrix}
The eigenvalues of the matrix $\bm{I}_p-2\bm{b}\bm{b}^T$ are $1$ and $1-2\|\bm{b}\|^2$ and their (algebraic) multiplicity are $(p-1)$ and $1$, respectively.
\end{lemma}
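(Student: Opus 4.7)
The plan is to reduce everything to the spectral analysis of the rank-at-most-one symmetric matrix $\bm{b}\bm{b}^T$, since $\bm{I}_p - 2\bm{b}\bm{b}^T$ is obtained from it by the affine transformation $\lambda \mapsto 1 - 2\lambda$ on the spectrum.

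First I would dispose of the trivial case $\bm{b} = \bm{0}$: then $\bm{I}_p - 2\bm{b}\bm{b}^T = \bm{I}_p$, whose only eigenvalue is $1$ with multiplicity $p$, and since $1 - 2\|\bm{b}\|^2 = 1$ as well, the stated multiplicities $p-1$ and $1$ are consistent (the two eigenvalues simply coincide).

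In the main case $\bm{b}\neq \bm{0}$, I would observe that $\bm{b}\bm{b}^T$ is symmetric of rank one. Computing $(\bm{b}\bm{b}^T)\bm{b} = \bm{b}(\bm{b}^T\bm{b}) = \|\bm{b}\|^2\,\bm{b}$ shows that $\bm{b}$ is an eigenvector of $\bm{b}\bm{b}^T$ with eigenvalue $\|\bm{b}\|^2$. For any $\bm{v}$ orthogonal to $\bm{b}$, $(\bm{b}\bm{b}^T)\bm{v} = \bm{b}(\bm{b}^T\bm{v}) = \bm{0}$, so the orthogonal complement $\bm{b}^{\perp}$ (a subspace of dimension $p-1$) lies in the kernel and consists of eigenvectors with eigenvalue $0$. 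Since $\mathbb{R}^p = \mathrm{span}(\bm{b}) \oplus \bm{b}^{\perp}$, this accounts for all eigenvalues of $\bm{b}\bm{b}^T$, with $\|\bm{b}\|^2$ of multiplicity $1$ and $0$ of multiplicity $p-1$.

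Applying $\lambda \mapsto 1 - 2\lambda$ to each eigenvalue (which preserves the eigenspaces and their dimensions) gives the eigenvalue $1 - 2\|\bm{b}\|^2$ for the eigenvector $\bm{b}$ with multiplicity $1$, and the eigenvalue $1$ on $\bm{b}^{\perp}$ with multiplicity $p-1$. This yields the claim.

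There is no real obstacle here; the proof is a direct computation for a rank-one perturbation of the identity. The only thing to be slightly careful about is allowing the degenerate configurations where $\bm{b}=\bm{0}$ or $\|\bm{b}\|^2 = 0$ (the same condition), so that the stated multiplicities still make sense as algebraic multiplicities even when the two listed eigenvalues coincide.
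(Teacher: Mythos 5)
Your proof is correct and follows essentially the same route as the paper: both arguments rest on the decomposition $\mathbb{R}^p = \mathrm{span}(\bm{b}) \oplus \bm{b}^{\perp}$, with $\bm{b}^{\perp}$ giving the eigenvalue $1$ with multiplicity $p-1$ and $\bm{b}$ itself the eigenvalue $1-2\|\bm{b}\|^2$. Your detour through the spectrum of $\bm{b}\bm{b}^T$ and the explicit treatment of the degenerate case $\bm{b}=\bm{0}$ are minor stylistic additions, not a different method.
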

\begin{lemma}\label{lemma:socp}
Under Assumption~\ref{assumption}, the following hold. 
\begin{enumerate}[label=(P\arabic*)]
%\setcounter{enumi}{-1} 
%\item\label{lemma:P} 
%\item\label{lemma:P0} $\bm{u}_*= \bm{0}$. The necessary and sufficient condition for $H\cap\inte\mathcal{K}\neq \emptyset$ is $2\|\bm{b}\|^2> 1$. 
%\item \label{lemma:P1} $\bm{u}_*\neq \bm{0}$. The necessary and sufficient condition for $H\cap\inte\mathcal{K}\neq \emptyset$ is $\bm{B}^T\hat{\bm{u}}_*\neq\bm{0}$. The necessary and sufficient condition for $H\cap\bd\mathcal{K}=\{\bm{u}_*\}$ is that $\bm{B}^T\hat{\bm{u}}_*=\bm{0}$ and $2\|\bm{b}\|^2 < 1$. 

\item \label{lemma:P2}  $H\cap\bd\mathcal{K}=\{\bm{u}_*\}$ if and only if  $\bm{B}^T\hat{\bm{u}}_*=\bm{0}$ and $2\|\bm{b}\|^2 < 1$
\item \label{lemma:P4} $H\cap\bd\mathcal{K}=\{\alpha\bm{d} : \alpha \ge 0\}$ for some $\bm{d}\in\bd\mathcal{K}\setminus\{\bm{0}\}$ if and only if $\|\bm{b}\|^2=1/2$. 
\item \label{lemma:P3} If $H\cap\bd\mathcal{K}=\{\bm{u}_*\}$, then for any $\bm{u}\in H\setminus\{\bm{u}_*\}$, $-\hat{\bm{u}} \not\in\mathcal{K}$. 
\item \label{lemma:P6} If $H\cap\bd\mathcal{K}=\{\bm{u}_*\}$, then for any $\bm{u}\in H\setminus\{\bm{u}_*\}$, $P_{\mathcal{K}}(\bm{u})\neq \bm{u}_*$. 
\item \label{lemma:P5} If $H\cap\bd\mathcal{K}=\{\alpha\bm{d} : \alpha \ge 0\}$ for some $\bm{d}\in\bd\mathcal{K}\setminus\{\bm{0}\}$, then for any $\bm{u}\in H\setminus\{\alpha \bm{d} : \alpha\in\mathbb{R}\}$, $-\hat{\bm{u}} \not\in\mathcal{K}$. 
\end{enumerate}
\end{lemma}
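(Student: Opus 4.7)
My plan is to reduce all five assertions to two common ingredients. The first is that under Assumption~\ref{assumption} the affine subspace $H$ lies inside the tangent hyperplane $T_{\bm{u}_*}:=\{\bm{x}\in\mathbb{R}^{n+1}:\langle\hat{\bm{u}}_*,\bm{x}\rangle=0\}$ of $\mathcal{K}$ at every nonzero boundary intersection point $\bm{u}_*\in H\cap\bd\mathcal{K}$. I would verify this by taking any $\bm{v}\in H$ and following the affine line $\bm{w}(t)=\bm{u}_*+t(\bm{v}-\bm{u}_*)\subseteq H$: the function $w_0(t)^2-\|\bm{w}_1(t)\|^2$ vanishes at $t=0$ with derivative $2\langle\hat{\bm{u}}_*,\bm{v}\rangle$, so if $\langle\hat{\bm{u}}_*,\bm{v}\rangle\neq 0$ then one sign of $t$ drives $\bm{w}(t)$ into $\inte\mathcal{K}$, contradicting Assumption~\ref{assumption}. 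In parametric form this reads $\bm{B}^T\hat{\bm{u}}_*=\bm{0}$ in case~(ii) and $\bm{B}^T\hat{\bm{d}}=\bm{0}$ in case~(iii). The second ingredient, supplied directly by Lemma~\ref{lemma:socp0}, identifies the exposed face $T_{\bm{u}_*}\cap\mathcal{K}$ with the generator $\{\alpha\bm{u}_*:\alpha\ge 0\}$.

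A short block computation then links everything to $\|\bm{b}\|^2$. Expanding $\bm{B}^T\hat{\bm{u}}_*=u_{*0}\bm{b}-\bm{B}_1^T\bm{u}_{*1}=\bm{0}$ gives $\bm{B}_1^T\bm{u}_{*1}=u_{*0}\bm{b}$, so $\bm{B}^T\bm{u}_*=2u_{*0}\bm{b}$ and $\|\bm{B}^T\bm{u}_*\|^2=4u_{*0}^2\|\bm{b}\|^2$. Since $\bm{B}\bm{B}^T$ is the orthogonal projector onto $\operatorname{range}(\bm{B})$ and $\|\bm{u}_*\|^2=2u_{*0}^2$, I read off $2\|\bm{b}\|^2\le 1$, with equality exactly when $\bm{u}_*\in\operatorname{range}(\bm{B})$. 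For (P1) this is decisive: $H=\bm{u}_*+\operatorname{range}(\bm{B})$ contains a second point of the ray $\{\alpha\bm{u}_*:\alpha\ge 0\}$ iff $\bm{u}_*\in\operatorname{range}(\bm{B})$, so $H\cap\bd\mathcal{K}=\{\bm{u}_*\}$ is equivalent to $\bm{B}^T\hat{\bm{u}}_*=\bm{0}$ together with $2\|\bm{b}\|^2<1$. For (P2), the same identity applied to $\bm{d}\in H=\operatorname{range}(\bm{B})$ forces $\|\bm{b}\|^2=1/2$; conversely, when $\|\bm{b}\|^2=1/2$ and $\bm{B}^T\bm{B}=\bm{I}_p$, Cauchy--Schwarz on the first coordinate $\bm{b}^T\bm{t}$ of $\bm{B}\bm{t}$ forces every $\bm{v}\in H\cap\mathcal{K}$ to satisfy $v_0=\|\bm{v}_1\|$ with $\bm{t}$ parallel to $\bm{b}$, yielding exactly the ray $\{\alpha\bm{B}\bm{b}:\alpha\ge 0\}$.

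The remaining three items then drop out. For (P3), I would note first that $-\hat{\bm{u}}\in\mathcal{K}$ is equivalent to $-\bm{u}\in\mathcal{K}$, so the tangency step places $-\bm{u}$ in $T_{\bm{u}_*}\cap\mathcal{K}=\{\alpha\bm{u}_*:\alpha\ge 0\}$; writing $\bm{u}=-\alpha\bm{u}_*$ and $\bm{u}-\bm{u}_*=-(1+\alpha)\bm{u}_*\in\operatorname{range}(\bm{B})$ forces $\bm{u}_*\in\operatorname{range}(\bm{B})$, contradicting (P1). Statement (P5) is the analog in case~(iii), shortened by the linearity of $H$: $-\bm{u}\in H\cap\mathcal{K}=\{\alpha\bm{d}:\alpha\ge 0\}$ immediately puts $\bm{u}$ in $\{\alpha\bm{d}:\alpha\in\mathbb{R}\}$. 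For (P4), the projection characterization gives $\bm{u}_*-\bm{u}\in\mathcal{K}$ orthogonal to $\bm{u}_*$, so Lemma~\ref{lemma:socp0} yields $\bm{u}_*-\bm{u}=\beta\hat{\bm{u}}_*$ with $\beta>0$; hence $\hat{\bm{u}}_*\in\operatorname{range}(\bm{B})$ contradicts $\bm{B}^T\hat{\bm{u}}_*=\bm{0}$ from (P1).

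The main obstacle I expect is the tangency step in the first paragraph, i.e.\ upgrading the separation-style hypothesis $H\cap\inte\mathcal{K}=\emptyset$ to the literal inclusion $H\subseteq T_{\bm{u}_*}$; once that is secured, everything else is linear algebra together with the self-duality $\mathcal{K}^\circ=-\mathcal{K}$ and Lemma~\ref{lemma:socp0}.
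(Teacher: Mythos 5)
Your route is genuinely different from the paper's and, where it applies, it is correct and arguably more geometric. The paper never introduces the tangent hyperplane: it characterizes $\bm{u}_*+\bm{B}\bm{t}\in\mathcal{K}$ by the sign condition $u_{*0}+\bm{b}^T\bm{t}\ge 0$ together with the quadratic inequality $2(\bm{b}u_{*0}-\bm{B}_1^T\bm{u}_{*1})^T\bm{t}+\bm{t}^T(2\bm{b}\bm{b}^T-\bm{I}_p)\bm{t}\ge 0$, and then runs everything through Lemma~\ref{lemma:matrix} ($\bm{I}_p-2\bm{b}\bm{b}^T$ is positive definite iff $2\|\bm{b}\|^2<1$ and singular iff $2\|\bm{b}\|^2=1$). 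Your tangency argument recovers the linear part $\bm{B}^T\hat{\bm{u}}_*=\bm{0}$ of that inequality, your projector identity $\|\bm{B}^T\bm{u}_*\|^2=4u_{*0}^2\|\bm{b}\|^2\le\|\bm{u}_*\|^2$ substitutes for the eigenvalue lemma, and your proofs of \ref{lemma:P6} and \ref{lemma:P3} via Lemma~\ref{lemma:socp0} are cleaner than the paper's componentwise computations. Item \ref{lemma:P5} is proved exactly as in the paper.

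The gap is the case $\bm{u}_*=\bm{0}$, which the statements of \ref{lemma:P2}, \ref{lemma:P3} and \ref{lemma:P6} include and which the paper treats as a separate branch ($u_{*0}=0$); this is not a corner case one may ignore, since \ref{lemma:P3} is invoked later precisely when $H\cap\bd\mathcal{K}=\{\bm{0}\}$ (Case 1 of Theorem~\ref{thm:convrate}). All of your tools degenerate there: $\hat{\bm{u}}_*=\bm{0}$, so the condition $\bm{B}^T\hat{\bm{u}}_*=\bm{0}$ is vacuous and the tangent hyperplane is all of $\mathbb{R}^{n+1}$; the derivative $2\langle\hat{\bm{u}}_*,\bm{v}\rangle$ is identically zero, so no interior point is produced; Lemma~\ref{lemma:socp0} cannot be applied with $\bm{x}=\hat{\bm{u}}_*$; and the inequality $4u_{*0}^2\|\bm{b}\|^2\le 2u_{*0}^2$ reads $0\le 0$ and says nothing about $\|\bm{b}\|^2$. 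In particular, your argument establishes neither direction of the equivalence $H\cap\bd\mathcal{K}=\{\bm{0}\}\iff 2\|\bm{b}\|^2<1$. The repair is short but must be supplied: for necessity, $\bm{B}\bm{b}=(\|\bm{b}\|^2;\bm{B}_1\bm{b})$ with $\|\bm{B}_1\bm{b}\|^2=\|\bm{b}\|^2(1-\|\bm{b}\|^2)$, so $2\|\bm{b}\|^2\ge 1$ would place the nonzero point $\bm{B}\bm{b}$ in $H\cap\mathcal{K}$; for sufficiency, $\bm{B}\bm{t}\in\mathcal{K}$ forces $\bm{t}^T(2\bm{b}\bm{b}^T-\bm{I}_p)\bm{t}\ge 0$, impossible for $\bm{t}\neq\bm{0}$ when $2\|\bm{b}\|^2<1$; and \ref{lemma:P3}, \ref{lemma:P6} at the origin follow at once from $-\bm{u}\in H\cap\mathcal{K}=\{\bm{0}\}$.
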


\begin{remark}
\ref{lemma:P3} asserts that $\bm{u}\in H\setminus\{\bm{u}_*\}$ satisfies $-u_0 < \|\bm{u}_1\|$ if the assumption of Lemma \ref{lemma:socp} is satisfied. Thus, it follows from Theorem \ref{thm:pprojsocp} that the projection to $\mathcal{K}$ of $\bm{u}\in H\setminus\{\bm{u}_*\}$ will be onto the first component $\lambda_1\bm{e}_1$ of the spectral decomposition of $\bm{u}$. Similarly, it follows from \ref{lemma:P5} that the projection to $\mathcal{K}$ of $\bm{u}\in H\setminus\{\alpha \bm{d} : \alpha\in\mathbb{R}\}$ will also be on to  first component $\lambda_1\bm{e}_1$ of the spectral decomposition of $\bm{u}$. Therefore, if the APM does not terminate in finitely many iterations, the projection of $\bm{u}$ onto $\mathcal{K}$ is the first component of its spectral decomposition. 

In addition, \ref{lemma:P6} implies that the APM does not terminate in finitely many iterations when the intersection of an affine subspace and the second-order cone is a singleton. Indeed, if the APM terminates finitely many iterations, then the projection of a $\bm{u}\in H$ to $\mathcal{K}$ must be $\bm{u}_*$. However, \ref{lemma:P6} ensures that  such $\bm{u}\in H\setminus\{\bm{u}_*\}$ does not exist. On the other hand, when the intersection is a half-line, the APM can terminate in finitely many iterations by choosing a specific initial point. See the subsection~\ref{sec:Case3} for the detail.
\end{remark}

\subsection{Facial structure of the second-order cone and the singularity degree}
We introduce the facial structure of the second-order cone $\mathcal{K}$ and the singularity degree. For this, we also explain facial reduction for the intersection of an affine subspace $H$ and $\mathcal{K}$ (or the product of second-order cones) because its singularity degree is defined as the minimum number of iterations of facial reduction. 

The concepts and symbols of convex analysis, which are often used to explain facial reduction, are introduced. For a nonempty set $S\subset\mathbb{R}^n$, $\bm{x}\in S$ is a relative interior of $S$ if the intersection of a neighborhood centered at $x$ and the affine hull of $S$ is contained in $S$. The set of all relative interiors of $S$ is called the relative interior of $S$, and is denoted by $\mathrm{ri}(S)$. For a convex set $C\subseteq\mathbb{R}^n$, a convex subset $F$ of $C$, we say that $F$ is a {\itshape face} of $C$ if $\bm{x}_1, \bm{x}_2\in C$ and the intersection of the open line segment $(\bm{x}_1, \bm{x}_2)$ and $F$ is nonempty implies that $\bm{x}_1, \bm{x}_2$ are both in $F$. For a face $F$ of a closed convex cone, $F^*$ and $F^\perp$ are defined as 
\begin{align*}
F^*&=\{\bm{y} : \bm{x}^T\bm{y} \ge 0 \ (\forall \bm{x}\in F)\}, F^\perp=\{\bm{y} : \bm{x}^T\bm{y} = 0 \ (\forall \bm{x}\in F)\}.
\end{align*}
For a nonempty convex subset $S$ of a convex cone $K\subset\mathbb{R}^n$, the {\itshape minimal face} of $K$ containing $S$ is defined as the intersection of all faces of $K$ that contain $S$, and is denoted by $\mathrm{face}(S, K)$. Note that the minimal face of is the smallest face of $K$ intersecting the relative interior of $S$, and $\mathrm{face}(S, K) = \mathrm{face}(\{\bm{x}\}, K)$ for any $\bm{x}\in\mathrm{ri}(S)$. 

In general, any nonempty face of the second-order cone $\mathcal{K}$ is either $\{\bm{0}\}$, itself or 
$C(\bm{x}) :=\{\alpha \bm{x} : \alpha \ge 0\}$, where $\bm{x}=(x_0; \bm{x}_1)\in\bd\mathcal{K}\setminus\{\bm{0}\}$. Also, when the set $K$ is the product of second-order cones, its face is the direct product of faces of the second-order cones. 

Let $K$ be the direct product of $r$ second-order cones with each length $n_j$ \ ($j=1, \ldots, r$). Recall that an affine subspace $H$ is formulated as $H=\{\bm{u}_* + \bm{B}\bm{t} : \bm{t}\in\mathbb{R}^p\}$. We assume that $H\cap K$ is nonempty. 
Facial reduction finds a minimal face $F_* :=\mathrm{face}(H\cap K, K)$, 
which satisfies $H\cap K = H \cap F_*$ and $H\cap \mathrm{ri}(F_*) \neq \emptyset$. The following theorem plays an essential role in facial reduction. We call $\bm{y}$ in the following theorem {\itshape reducing certificate}. 

\begin{theorem}\label{thm:fr}(\cite[Lemma 28.4]{Pataki2013})
Let $F$ be a closed convex cone. $H\cap \mathrm{ri}(K)$ is empty if and only if there exists $\bm{y}\in F^*\setminus F^\perp$ such that $\bm{u}_*^T\bm{y}=0$ and $\bm{B}^T\bm{y}=\bm{0}$. Moreover, if such $\bm{y}$ exists, then the set $G:=F\cap \{\bm{y}\}^\perp$ is strictly smaller face of $F$. % and satisfies $H\cap G = H\cap K$. 
\end{theorem}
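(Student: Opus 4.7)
The plan is to split the equivalence into its two directions and then establish the face claim as a standard consequence. The backward implication is straightforward: if $\bm{y}\in F^*\setminus F^\perp$ satisfies $\bm{u}_*^T\bm{y}=0$ and $\bm{B}^T\bm{y}=\bm{0}$, then every $\bm{x}=\bm{u}_*+\bm{B}\bm{t}\in H$ has $\langle \bm{x}, \bm{y}\rangle = 0$. On the other hand, the standard fact that any element of $F^*\setminus F^\perp$ is strictly positive on $\mathrm{ri}(F)$ (which one can verify by writing a point of $\mathrm{ri}(F)$ as a convex combination involving some $\bm{z}\in F$ with $\langle\bm{y},\bm{z}\rangle>0$) yields $\langle \bm{z}, \bm{y}\rangle > 0$ for every $\bm{z}\in\mathrm{ri}(F)$. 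Combining the two shows $H\cap \mathrm{ri}(F)=\emptyset$.

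For the forward direction, assume $H\cap\mathrm{ri}(F)=\emptyset$. Since $\mathrm{ri}(H)=H$ is disjoint from $\mathrm{ri}(F)$, the two convex sets $H$ and $F$ can be properly separated: there exist a nonzero vector $\bm{y}$ and a scalar $c$ with $\langle \bm{y}, \bm{x}\rangle \le c \le \langle \bm{y}, \bm{z}\rangle$ for all $\bm{x}\in H$ and $\bm{z}\in F$, with strict inequality at some point. Because $H$ is affine, $\langle\bm{y},\bm{x}\rangle$ is constant on $H$, so the first inequality is an equality. Because $F$ is a cone containing $\bm{0}$, taking $\bm{z}=\bm{0}$ gives $c\le 0$, and the scaling $\bm{z}\mapsto \alpha\bm{z}$ with $\alpha>0$ forces $\langle\bm{y},\bm{z}\rangle\ge 0$ on $F$ together with $c=0$. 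Hence $\bm{y}\in F^*$, $\langle \bm{y}, \bm{u}_* + \bm{B}\bm{t}\rangle = 0$ for every $\bm{t}\in\mathbb{R}^p$, which yields $\bm{u}_*^T\bm{y}=0$ and $\bm{B}^T\bm{y}=\bm{0}$. The strict inequality in the separation must be realized on $F$ (since $\bm{y}$ vanishes on $H$), giving some $\bm{z}\in F$ with $\langle\bm{y},\bm{z}\rangle>0$, so $\bm{y}\notin F^\perp$.

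For the moreover statement, $G=F\cap\{\bm{y}\}^\perp$ is a face of $F$ by the standard exposed-face argument: if $\bm{x}_1,\bm{x}_2\in F$ and some point of the open segment $(\bm{x}_1,\bm{x}_2)$ lies in $G$, then the convex combination $\lambda\langle\bm{y},\bm{x}_1\rangle+(1-\lambda)\langle\bm{y},\bm{x}_2\rangle=0$ with $\lambda\in(0,1)$ and non-negative summands forces both summands to vanish, so $\bm{x}_1,\bm{x}_2\in G$. The condition $\bm{y}\notin F^\perp$ produces some $\bm{x}\in F$ with $\langle\bm{y},\bm{x}\rangle>0$, so $\bm{x}\in F\setminus G$ and the containment $G\subsetneq F$ is strict.

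The main technical obstacle is the separation step in the forward direction: one must apply the proper separation theorem for convex sets whose relative interiors are disjoint, and then simultaneously leverage the affine structure of $H$ and the conic structure of $F$ to pin down the separating constant $c=0$, to place $\bm{y}$ in $F^*$, and to conclude $\bm{y}\notin F^\perp$ from the strict-inequality clause. Once this is done correctly, the translation of $\langle\bm{y},\bm{x}\rangle=0$ on $H$ into the two algebraic conditions on $\bm{u}_*$ and $\bm{B}$, as well as the exposed-face argument, is routine.
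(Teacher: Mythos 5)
The paper offers no proof of this statement to compare against: Theorem~\ref{thm:fr} is quoted directly from \cite[Lemma 28.4]{Pataki2013}, so your argument stands or falls on its own. Most of it is the standard and correct route: the backward direction via strict positivity of elements of $F^*\setminus F^\perp$ on $\mathrm{ri}(F)$, the proper separation of the affine set $H$ from the cone $F$, the extraction of $\bm{y}\in F^*$ from boundedness below of $\langle\bm{y},\cdot\rangle$ on a cone, the use of properness to get $\bm{y}\notin F^\perp$, and the exposed-face argument for $G=F\cap\{\bm{y}\}^\perp$ are all fine. (You also silently read $\mathrm{ri}(K)$ in the statement as $\mathrm{ri}(F)$, which is indeed the intended meaning.)

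There is one genuine gap, in the forward direction: you claim the scaling $\bm{z}\mapsto\alpha\bm{z}$ forces ``$\langle\bm{y},\bm{z}\rangle\ge 0$ on $F$ together with $c=0$.'' Scaling only gives $\bm{y}\in F^*$ and $\inf_F\langle\bm{y},\cdot\rangle=0$, hence $c\le 0$; nothing you wrote rules out $c<0$. To conclude $c=0$ you must invoke the standing feasibility assumption $H\cap F\neq\emptyset$ (in the paper's facial-reduction loop, $H\cap K\neq\emptyset$ and $H\cap K\subseteq F$ at every stage): a point $\bm{x}_0\in H\cap F$ satisfies $\langle\bm{y},\bm{x}_0\rangle=c$ and, since $\bm{y}\in F^*$, $\langle\bm{y},\bm{x}_0\rangle\ge 0$, whence $c=0$ and then $\bm{u}_*^T\bm{y}=0$ and $\bm{B}^T\bm{y}=\bm{0}$. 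This hypothesis is not decorative: taking $H=\{-1\}$ and $F=[0,\infty)$ in $\mathbb{R}$ gives $H\cap\mathrm{ri}(F)=\emptyset$, yet every $y\in F^*\setminus F^\perp=(0,\infty)$ has $u_*y=-y\neq 0$, so the forward implication is false without $H\cap F\neq\emptyset$. Add one sentence citing that assumption and your proof is complete.
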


Facial reduction starts with $K$ in Theorem \ref{thm:fr} as $F$. If no vectors $\bm{y}$ as in Theorem \ref{thm:fr}, then $K$ is the minimal face, and it  terminates. Otherwise, Theorem \ref{thm:fr} guarantees that $H\cap K$ lies in the smaller face $G=K\cap\{\bm{y}\}^\perp$. We replace $F$ in Theorem \ref{thm:fr} by $G$ and repeat this procedure. Since the facial reduction finds the minimal face or a face with a smaller dimension  (i.e., the dimension of the affine hull of the face) in each iteration, it terminates in finitely many iterations. The minimal number of iterations that are necessary for the facial reduction to terminate is called the singularity degree of the intersection of $H\cap K$. When $K$ is the direct product of $r$ second-order cones, the singularity degree is at most $2r$.  In \cite[Proposition 24]{Lourenco2018}, an example given of intersections of an affine subspace and the second-order cone such that the singularity degree is $r$. 

The following proposition provides the singularity degree of $H\cap K$ when $K$ is a single second-order cone, i.e., $K=\mathcal{K}$. We postpone the proof in Appendix~\ref{sec:proofOfsd}. 
%This theorem follows from \cite[Example1]{Lourenco2018}. 
\begin{proposition}\label{prop:fr}
Consider the intersection of an affine subspace $H\subset\mathbb{R}^{n+1}$ and a second-order cone $\mathcal{K}$. Assume that the intersection is nonempty and that $\bm{u}_*\in\bd\mathcal{K}$. The singularity degree of $H\cap \mathcal{K}$ is at most one. 
\end{proposition}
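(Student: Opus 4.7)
The plan is to exhibit, in each possible configuration of $H\cap\mathcal{K}$, one reducing certificate whose single facial reduction step already lands on the minimal face. If $H\cap\inte\mathcal{K}\neq\emptyset$ the singularity degree is $0$, so I may assume Assumption~\ref{assumption}. Combined with the face structure of $\mathcal{K}$ (whose faces are only $\{\bm{0}\}$, the rays $C(\bm{x})$ with $\bm{x}\in\bd\mathcal{K}\setminus\{\bm{0}\}$, and $\mathcal{K}$ itself), this forces $H\cap\mathcal{K}$ into one of three forms: (B) a singleton $\{\bm{u}_*\}$ with $\bm{u}_*\in\bd\mathcal{K}\setminus\{\bm{0}\}$, (C) a half-line $\{\alpha\bm{d}:\alpha\ge 0\}$ with $\bm{d}\in\bd\mathcal{K}\setminus\{\bm{0}\}$, or (A) the singleton $\{\bm{0}\}$. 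In each case I would apply Theorem~\ref{thm:fr} exactly once.

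For Case (B), parametrize $H=\{\bm{u}_*+\bm{B}\bm{t}\}$ and take the certificate $\bm{y}=\hat{\bm{u}}_*$. Then $\hat{\bm{u}}_*\in\mathcal{K}\setminus\{\bm{0}\}=\mathcal{K}^*\setminus\mathcal{K}^\perp$; $\bm{u}_*^T\hat{\bm{u}}_*=u_{*,0}^2-\|\bm{u}_{*,1}\|^2=0$ since $\bm{u}_*\in\bd\mathcal{K}$; and $\bm{B}^T\hat{\bm{u}}_*=\bm{0}$ is exactly Lemma~\ref{lemma:socp}\ref{lemma:P2}. Lemma~\ref{lemma:socp0} then identifies $\mathcal{K}\cap\{\hat{\bm{u}}_*\}^\perp$ with $C(\bm{u}_*)$, the minimal face containing $H\cap\mathcal{K}$. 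For Case (C), parametrize $H=\{\bm{B}\bm{t}\}$ and take $\bm{y}=\hat{\bm{d}}$; the same Lemma~\ref{lemma:socp0} yields $\mathcal{K}\cap\{\hat{\bm{d}}\}^\perp=C(\bm{d})$ once $\bm{B}^T\hat{\bm{d}}=\bm{0}$ is verified. For Case (A), the minimal face is $\{\bm{0}\}$ itself, so I seek a certificate in $\inte\mathcal{K}\cap H^\perp$; arguing by contradiction, if no such certificate existed I would separate the subspace $H^\perp$ from the open convex cone $\inte\mathcal{K}$, producing a nonzero $\bm{v}\in(H^\perp)^\perp\cap\mathcal{K}=H\cap\mathcal{K}$, which contradicts $H\cap\mathcal{K}=\{\bm{0}\}$.

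The main obstacle is verifying $\bm{B}^T\hat{\bm{d}}=\bm{0}$ in Case (C), since Lemma~\ref{lemma:socp}\ref{lemma:P4} only yields the scalar relation $\|\bm{b}\|^2=1/2$. My preferred route is a first-order/tangent argument: the hypersurface $\bd\mathcal{K}\setminus\{\bm{0}\}=\{\bm{x}:x_0^2-\|\bm{x}_1\|^2=0,\ x_0>0\}$ is smooth at $\bm{d}$, with tangent hyperplane $\{\bm{x}:\hat{\bm{d}}^T\bm{x}=0\}$. Since $H$ is affine, contains $\bm{d}$, and by Assumption~\ref{assumption} does not meet $\inte\mathcal{K}$, any direction in $\mathrm{range}(\bm{B})$ lying outside this tangent hyperplane would, to first order, push $H$ into $\inte\mathcal{K}$; hence $\mathrm{range}(\bm{B})\subset\hat{\bm{d}}^\perp$, i.e.\ $\bm{B}^T\hat{\bm{d}}=\bm{0}$. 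An equivalent route is to separate $\mathrm{range}(\bm{B})$ from $\inte\mathcal{K}$ and then invoke Lemma~\ref{lemma:socp0} to conclude the separator is a positive multiple of $\hat{\bm{d}}$. With this in hand, Theorem~\ref{thm:fr} delivers the single-step facial reduction in every case.
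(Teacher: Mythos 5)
Your proposal is correct and shares the paper's overall skeleton --- the same trichotomy on $H\cap\mathcal{K}$ (the origin, a nonzero singleton, a half-line), one application of Theorem~\ref{thm:fr} in each case, and in the singleton case literally the same certificate $\hat{\bm{u}}_*$ justified by \ref{lemma:P2} and Lemma~\ref{lemma:socp0} --- but it diverges in how the other two cases are verified. For the half-line case the paper does not need your tangency or separation argument: the proof of \ref{lemma:P4} already produces the explicit generator $\bm{d}=\bm{B}\bm{b}/\|\bm{b}\|$ with $\|\bm{b}\|^2=1/2$, and then $\bm{B}^T\hat{\bm{d}}=\bm{b}\,\|\bm{b}\|-\bm{B}_1^T\bm{B}_1\bm{b}/\|\bm{b}\|=(2\|\bm{b}\|^2-1)\bm{b}/\|\bm{b}\|=\bm{0}$ is a one-line computation from $\bm{b}\bm{b}^T+\bm{B}_1^T\bm{B}_1=\bm{I}_p$; your first-order argument at the smooth boundary point $\bm{d}$ is valid but replaces this computation with an analytic one. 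For the origin case the paper is constructive where you are not: it takes an orthonormal complement $\bm{C}=\begin{bmatrix}\bm{c}^T\\ \bm{C}_1\end{bmatrix}$ of $\bm{B}$, sets $\bm{y}=\bm{C}\bm{c}$, and checks $y_0^2-\|\bm{y}_1\|^2=\|\bm{c}\|^2(2\|\bm{c}\|^2-1)>0$ using $\|\bm{c}\|^2=1-\|\bm{b}\|^2>1/2$, which follows from \ref{lemma:P2}. Your separation argument (if $\inte\mathcal{K}\cap H^\perp=\emptyset$, a separating functional lies in $(H^\perp)^\perp\cap\mathcal{K}^*=H\cap\mathcal{K}$ and is nonzero) is a clean duality proof that would generalize to any full-dimensional self-dual cone, at the cost of not exhibiting the certificate; just make sure to note that separating a linear subspace from an open cone forces the functional to vanish on the subspace and be nonnegative on the cone. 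Both routes are sound; the paper's buys explicit certificates and stays within the linear algebra already set up for Lemma~\ref{lemma:socp}, while yours is shorter on computation and more portable.
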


\subsection{Lemma for exact convergence rate}
We use the following lemma to find the convergence rate of this case. The proof is given in Appendix~\ref{sec:proofOfLemma31}. %We remark that \ref{L4} of the following lemma is used in a proof in Appendix~\ref{app:property}.

\begin{lemma}\label{lemma1}
\begin{enumerate}[label=(L\arabic*)]
%\item \label{L1} Define $F(\bm{t}) := \|\bm{t}\|^2-2(\bm{b}^T\bm{t})^2$. Then,  for any $\bm{t}\in\mathbb{R}^p$, 
%\begin{align}\label{ineqL1}
%(1-2\|\bm{b}\|^2)\|\bm{t}\|^2 &\le F(\bm{t})\le \|\bm{t}\|^2. 
%\end{align}
\item \label{L2} Consider a sequence  $\{x_k\}\subset\mathbb{R}$ that satisfies  $x_k>0$ \ $(k=0, 1, \ldots)$, $x_k\to0 \ (k\to\infty)$. Assume that there exist $C>0$, $\tilde{k}, q\in\mathbb{N}$, and a convergent power series $h$ such that 
\begin{align}\label{ineqL2}
x_{k+1}&\le x_k(1-Cx_{k}^q + x_k^{q+1}h(x_k)) \ (k\ge \tilde{k}),  
\end{align}
%and $1-Cx_{k}^q + x_k^{q+1}h(x_k)>0$ for all $k\ge K$. 
Then, $\displaystyle\limsup_{k\to\infty} (qC)^{1/q}k^{1/q}x_k \le 1$. 
\item \label{L3} Consider a sequence $\{x_k\}\subset\mathbb{R}$ that satisfies $x_k>0$ \ $(k=0, 1, \ldots, )$, $x_k\to0 \ (k\to\infty)$.  Assume that there exist $C>0$, $\tilde{k}, q\in\mathbb{N}$ and a convergent power series $h$ such that 
\begin{align}\label{ineqL3}
x_{k+1}&\ge x_k(1-Cx_{k}^q + x_k^{q+1}h(x_k)) \ (k\ge \tilde{k}), 
\end{align}
%and $1-Cx_{k}^q + x_k^{q+1}h(x_k)>0$ for all $k\ge K$. 
Then, $\displaystyle\liminf_{k\to\infty} (qC)^{1/q}k^{1/q}x_k \ge 1$. 

\item\label{L4} Consider a sequence $\{x_k\}\subset\mathbb{R}$ that converges to $x_\infty$ and $\{w_k\}\subset\mathbb{R}$  defined as 
\[
w_{k+1} = w_k\left(1 - w_k^q f(x_k, w_k)\right) \ (k=0, 1, \ldots)
\]
with positive integer $q$ and a function $f : \mathbb{R}^2 \to\mathbb{R}$ which is continuous at $(x_\infty, 0)$. Assume that $w_k > 0$ for all $k$ and $w_k\to 0$ \ $(k\to\infty)$. Then, we have $\displaystyle\lim_{k\to\infty} 1/(k w_{k}^q) = qf(x_\infty, 0)$. In particular, if $f(x_\infty, 0)\neq 0$, then this implies $w_{k} = \Theta(k^{-1/q})$. 

\item\label{L5} Consider a sequence $\{a_k\}\subset\mathbb{R}$ that converges to $a_\infty$. In addition, we consider a sequence $\{x_k\}\subset \mathbb{R}$ that converges to $x_\infty$ and satisfies
\[
x_{k+1} - x_k = \frac{a_k}{k^2} \ (k=0, 1, \ldots).
\]
Then, we have $\displaystyle\lim_{k\to\infty} k(x_\infty - x_k) = a_\infty$. In particular, if $a_\infty \neq 0$, this implies $|x_\infty - x_k| = \Theta(k^{-1})$. 
\end{enumerate}
\end{lemma}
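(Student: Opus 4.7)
The plan is to reduce parts \ref{L2}, \ref{L3}, and \ref{L4} to the elementary observation that the reciprocal power $1/x_k^q$ (respectively $1/w_k^q$) increases by a nearly constant amount at each step, and then apply telescoping together with the Stolz--Ces\`aro theorem to extract the precise asymptotic rate; part \ref{L5} is instead handled by summing the tail $\sum_{j\ge k} a_j/j^2$ directly.

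For \ref{L2}, I would set $y_k=1/x_k^q$. Since $x_k\to 0$, for $k$ large the value $x_k$ lies inside the disk of convergence of $h$, so $h(x_k)$ is uniformly bounded, and the bracket $1-Cx_k^q+x_k^{q+1}h(x_k)$ is positive. The expansion $(1-u)^{-q}=1+qu+O(u^2)$ with $u=Cx_k^q-x_k^{q+1}h(x_k)=O(x_k^q)$ then yields
\[
y_{k+1}\;\ge\;y_k\bigl(1+qCx_k^q+O(x_k^{q+1})\bigr)\;=\;y_k+qC+O(x_k).
\]
Summing from $\tilde k$ to $k-1$ and noting that the Ces\`aro average of the null sequence $\{x_j\}$ tends to $0$, I obtain $y_k/k\ge qC-o(1)$, which rearranges to the claimed $\limsup$ bound. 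Part \ref{L3} follows from the identical argument with every inequality reversed. For \ref{L4}, where the recursion is an equality, the same Taylor expansion gives
\[
\frac{1}{w_{k+1}^q}-\frac{1}{w_k^q}\;=\;qf(x_k,w_k)+O(w_k^q),
\]
and continuity of $f$ at $(x_\infty,0)$ combined with $x_k\to x_\infty$ and $w_k\to 0$ drives the right-hand side to $qf(x_\infty,0)$; Stolz--Ces\`aro then delivers $1/(kw_k^q)\to qf(x_\infty,0)$, whence $w_k=\Theta(k^{-1/q})$ in the nondegenerate case.

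For \ref{L5}, the convergence of $\{x_k\}$ lets me write $x_\infty-x_k=\sum_{j=k}^{\infty}a_j/j^2$. Decomposing $a_j=a_\infty+\varepsilon_j$ with $\varepsilon_j\to 0$, the main term satisfies $a_\infty\cdot k\sum_{j=k}^{\infty}1/j^2\to a_\infty$ by the standard tail asymptotic $\sum_{j\ge k}1/j^2=1/k+O(1/k^2)$, while the remainder obeys the bound $\bigl|k\sum_{j=k}^{\infty}\varepsilon_j/j^2\bigr|\le\sup_{j\ge k}|\varepsilon_j|\cdot k\sum_{j\ge k}1/j^2\to 0$. The main obstacle I anticipate is making uniform, across \ref{L2}--\ref{L4}, the constants hidden in $O(x_k)$ and $O(w_k^q)$; this requires choosing $\tilde k$ large enough that all subsequent iterates lie in a fixed compact subset of the disk of convergence of $h$ (resp.\ a compact neighborhood of $(x_\infty,0)$ on which $f$ is bounded), but once this is arranged, the Ces\`aro/telescoping step is routine.
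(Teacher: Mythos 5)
Your proposal is correct and follows essentially the same route as the paper: for \ref{L2}--\ref{L4} the paper also passes to the reciprocal $q$-th power, shows its increments tend to $qC$ (resp.\ $qf(x_\infty,0)$) via an auxiliary function $g$ playing the role of your Taylor remainder, and concludes by telescoping and a Ces\`aro-mean argument; for \ref{L5} it likewise sums the tail $\sum_{\ell\ge k}a_\ell/\ell^2$ and sandwiches $k(x_\infty-x_k)$ between $\inf_{i\ge k}a_i$ and $\sup_{i\ge k}a_i$. The only differences are cosmetic (explicit identities versus big-$O$ bookkeeping, and your explicit invocation of Stolz--Ces\`aro).
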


\section{Alternating projection method and the convergence rate}\label{sec:convergence}
Let $P_{\mathcal{K}}(\bm{u})$ be the projection of $\bm{u}$ onto $\mathcal{K}$ and $P_{H}(\bm{v})$ be the projection of $\bm{v}$ onto $H$. The APM generates a sequence $\{\bm{u}_k\}\subset\mathbb{R}^{n+1}$ for the initial point $\bm{u}_0\in H$ as follows: 
% \[
% \left\{
% \begin{array}{lcl}
% \bm{v}_{k+1} &=& P_{\mathcal{K}}(\bm{u}_k), \\
% \bm{u}_{k+1} &=& P_{H}(\bm{v}_{k+1})
% \end{array}
% \right. \ (k=0, 1, \ldots, ). 
% \]
\[
\bm{u}_{k+1} = P_H(P_{\mathcal{K}}(\bm{u}_k)) \ (k=0, 1, \ldots, ). 
\]
The initial point $\bm{u}_0$ satisfies $\bm{u}_0\not\in\mathcal{K}$. Otherwise, we do not need to apply the APM to find a point in $H\cap\mathcal{K}$. 

%As in Lemma ~\ref{lemma:socp}, if  $\bm{u}_k\neq\bm{u}_*$ and $\bm{u}_k\not\in\mathcal{K}$, then $-u_{k0}< \|\bm{u}_{k1}\|_2$.  Therefore, in each iteration of the APM, $\bm{v}_k = \lambda_1 \bm{e}_1$ when the spectral decomposition of $\bm{u}_k$ is $\lambda_1\bm{e}_1 + \lambda_2\bm{e}_2$. 

%\section{Convergence rate}

Since the sequence $\{\bm{u}_k\}$ obtained by applying the APM satisfies $\bm{u}_k\in H$ \ $(k=0, 1, \ldots, )$, 
$\bm{u}_{k}=\bm{u}_* + \bm{B}\bm{t}_{k}$ \ $(k=0, 1, \ldots, )$ can be written. Therefore, $\|\bm{u}_k -\bm{u}_*\| = \|\bm{B}\bm{t}_{k}\| = \|\bm{t}_k\|$. This shows that the convergence rates of the sequences $\{\|\bm{u}_k -\bm{u}_*\|\}$ and $\{\|\bm{t}_k\|\}$ are the same. In the following, we discuss the convergence rate of $\{\|\bm{t}_k\|\}$. %In particular, we analyze the convergence rate in two cases: $u_{*0}=0$ and $u_{*0}\neq 0$, i.e. $\bm{u}_*=\b{0}$ and $\bm{u}_*\neq \bm{0}$. 
The following theorem summarizes the results obtained in this section: 

\begin{theorem}\label{thm:convrate}
Let $\{\bm{u}_k\}$ be a sequence generated by applying the alternating projection to the intersection of an affine subspace $H$ and the second-order cone $\mathcal{K}$.   %Assume that $\bm{u}_k\not\in\mathcal{K}$ in each iteration. 
Under %this assumption and 
Assumption~\ref{assumption}, the following hold:
\begin{enumerate}[label=(\arabic*)]
\item \label{Conv1} If $H\cap\bd\mathcal{K}=\{\bm{0}\}$, it converges to $\bm{0}$ at a linear rate.
\item \label{Conv2} If $H\cap\bd\mathcal{K}=\{\bm{u}_*\}$ and $\bm{u}_{*}\neq \bm{0}$, it converges to $\bm{u}_*$ at $\Theta(k^{-1/2})$ rate.
\item\label{Conv3} If $H\cap\bd\mathcal{K}=C(\bm{d})$ for some $\bm{d}\in \bd\mathcal{K}\setminus\{\bm{0}\}$, it attains the origin in one iteration or it converges to a point in $C(\bm{d})$ other than the origin with $\Theta(k^{-1/2})$ rate.
% \item \label{Conv2} If $H\cap\bd\mathcal{K}=\{\bm{u}_*\}$, $u_{*0}\neq 0$ and $p=1$, then $\displaystyle\lim_{k\to\infty}(1-2b^2)k^{1/2}t_k = 1$.
% \item \label{Conv3}  If $H\cap\bd\mathcal{K}=\{\bm{u}_*\}$, $u_{*0}\neq 0$ and $p>1$, then 
% \begin{align}\label{cvineq}
% \sqrt{2}u_{*0}\le \liminf_{k\to\infty}k^{1/2}\|\bm{t}_k\|,  \limsup_{k\to\infty}k^{1/2}\|\bm{t}_k\|\le \frac{
% \sqrt{2}u_{*0}}{1-2\|\bm{b}\|^2}
% \end{align}
\end{enumerate}
\end{theorem}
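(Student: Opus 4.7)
The plan is to treat the three cases separately, with the two non-trivial ones (\ref{Conv2} and \ref{Conv3}) reduced to asymptotic analyses of scalar recursions on $\{\|\bm{t}_k\|\}$ that can be handled by Lemma~\ref{lemma1}.

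For \ref{Conv1}, since $H\cap\inte\mathcal{K}=\emptyset$ and $H\cap\bd\mathcal{K}=\{\bm{0}\}$, the whole intersection is just the origin, where linear (in fact, bounded linear) regularity of the pair $(H,\mathcal{K})$ holds. Combining this with \cite[Proposition 4.2]{Borwein2014} (as already foreshadowed in the introduction), the sequence converges linearly to $\bm{0}$, and no further work is needed.

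For \ref{Conv2}, I would first use the affine parameterization $\bm{u}_k=\bm{u}_*+\bm{B}\bm{t}_k$ and observe, via \ref{lemma:P3} and Theorem~\ref{thm:pprojsocp}, that $P_{\mathcal{K}}(\bm{u}_k)=\lambda_1\bm{e}_1$ at every iteration, where $\lambda_1$ and $\bm{e}_1$ come from the spectral decomposition of $\bm{u}_k$. Since $P_H(\bm{v})=\bm{u}_*+\bm{B}\bm{B}^T(\bm{v}-\bm{u}_*)$, an explicit recursion $\bm{t}_{k+1}=\Phi(\bm{t}_k)$ can be extracted. Using the defining constraints $\bm{B}^T\hat{\bm{u}}_*=\bm{0}$ and $2\|\bm{b}\|^2<1$ from \ref{lemma:P2}, together with the eigenstructure of $\bm{I}_p-2\bm{b}\bm{b}^T$ given by Lemma~\ref{lemma:matrix}, I would expand $\Phi$ asymptotically for small $\|\bm{t}_k\|$ to obtain an inequality of the form
\[
\|\bm{t}_{k+1}\|^2=\|\bm{t}_k\|^2\bigl(1-C\|\bm{t}_k\|^2+\|\bm{t}_k\|^3 h(\bm{t}_k)\bigr),
\]
with an explicit constant $C>0$ determined by $\bm{u}_*$ and $\bm{B}$ and a convergent power series $h$. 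Setting $x_k:=\|\bm{t}_k\|^2$ and applying parts \ref{L2} and \ref{L3} of Lemma~\ref{lemma1} with $q=2$ (or, more cleanly, part \ref{L4}) yields matching upper and lower bounds and therefore $\|\bm{t}_k\|=\Theta(k^{-1/2})$.

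For \ref{Conv3}, I would first dispatch the finite-termination case: if $-\bm{u}_0\in\mathcal{K}$, Theorem~\ref{thm:pprojsocp} gives $P_{\mathcal{K}}(\bm{u}_0)=\bm{0}\in H$, so $\bm{u}_1=\bm{0}$ and the APM terminates in one iteration. Otherwise, \ref{lemma:P5} guarantees $P_{\mathcal{K}}(\bm{u}_k)=\lambda_1\bm{e}_1$ throughout the iteration, and the defining property \ref{lemma:P4} gives $\|\bm{b}\|^2=1/2$. Writing $H=\{\bm{B}\bm{t}:\bm{t}\in\mathbb{R}^p\}$ and choosing coordinates so that the half-line $C(\bm{d})$ is aligned with the eigenspace of $\bm{I}_p-2\bm{b}\bm{b}^T$ for eigenvalue $1-2\|\bm{b}\|^2=0$ given by Lemma~\ref{lemma:matrix}, I would split $\bm{t}_k$ into a component $\alpha_k$ along $\bm{d}$ and a transverse component $\bm{s}_k$, derive the analogous recursion, and again obtain an expansion $\|\bm{s}_{k+1}\|^2=\|\bm{s}_k\|^2(1-C\|\bm{s}_k\|^2+\cdots)$ to which Lemma~\ref{lemma1} applies. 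Part \ref{L5} of Lemma~\ref{lemma1} is then used to control the drift $\alpha_k\to\alpha_*>0$ at the faster rate $\Theta(k^{-1})$, so the overall rate is dominated by the transverse part at $\Theta(k^{-1/2})$.

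The main obstacle will be the bookkeeping in the asymptotic expansion of the recursion: producing the explicit leading coefficient $C$ and certifying that the remainder is genuinely a convergent power series in $\bm{t}_k$, as demanded by Lemma~\ref{lemma1}. Verifying $C>0$ — and thus that the lower bound matches the upper bound — is the decisive step, and it rests squarely on the identities $\bm{B}^T\bm{B}=\bm{I}_p$, $\bm{B}^T\hat{\bm{u}}_*=\bm{0}$ (resp.\ the half-line analogue), and $2\|\bm{b}\|^2<1$ (resp.\ $=1$). In case \ref{Conv3}, the extra subtlety is that the limit point $\alpha_*\bm{d}$ is not prescribed, so the analysis must simultaneously track the drift $\alpha_k$ and the transverse decay, which is where the cleaner hypotheses of part \ref{L5} of Lemma~\ref{lemma1} become essential.
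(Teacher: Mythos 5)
Your proposal is correct and follows essentially the same route as the paper: case \ref{Conv1} via linear regularity combined with \cite[Proposition 4.2]{Borwein2014}; case \ref{Conv2} via the parameterized recursion $\bm{t}_{k+1}=\bm{B}^T(\lambda_1\bm{e}_1-\bm{u}_*)$, its asymptotic expansion, and parts \ref{L2}/\ref{L3} of Lemma~\ref{lemma1}; and case \ref{Conv3} via the split of $\bm{t}_k$ into the component along $\bm{d}$ and the transverse part, with \ref{L4} for the transverse decay and \ref{L5} for the drift. The one refinement the paper makes that you should adopt is that for $p\ge 2$ the leading coefficient in case \ref{Conv2} depends on the direction of $\bm{t}_k$ through $(\bm{b}^T\bm{t}_k)^2$ and must be sandwiched by Cauchy--Schwarz before applying \ref{L2}/\ref{L3} (yielding a $\Theta(k^{-1/2})$ bracket rather than a single limit), and in case \ref{Conv3} the transverse coefficient depends on the drifting coordinate $x_k$, which is precisely why \ref{L4} with a two-variable $f$ continuous at $(x_\infty,0)$, rather than \ref{L2}/\ref{L3} with a fixed constant $C$, is the tool that closes the argument.
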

It follows from Lemma~\ref{lemma:socp} that we have $\bm{u}_k\not\in\mathcal{K}$ \ $(k=0, 1, \ldots)$ in the cases \ref{Conv1} and \ref{Conv2} of Theorem~\ref{thm:convrate},  Furthermore, from \ref{lemma:P3}  and \ref{lemma:P5} of Lemma~\ref{lemma:socp}, $P_{\mathcal{K}}(\bm{u}_k) = \lambda_1 \bm{e}_1$ at each iteration, where $\lambda_1\bm{e}_1$ is the first component of the spectral decomposition of $\bm{u}_k$. On the other hand, in \ref{Conv3} of Theorem~\ref{thm:convrate}, if we choose an initial point from the set $\{\alpha\bm{d}: \alpha\in\mathbb{R}\}$, the APM terminates in at most one iteration. 

\subsection{Case 1: $H\cap\bd\mathcal{K}=\{\bm{0}\}$}
When the affine subspace and the second-order cone intersect  non-transversally at the origin only, the linear regularity, which is stronger than the H{\"o}lder regularity with the exponent $1/2$, holds. This can be summarized as the following fact. Item \ref{Conv1} of Theorem~\ref{prop:linearreg} follows from this fact. 
\begin{proposition}\label{prop:linearreg}
Suppose that the intersection of the affine subspace $H$ and the second-order cone $\mathcal{K}$ is $\{\bm{0}\}$. Then, there exists $c>0$ such that for all $\bm{x}\in\mathbb{R}^{n+1}$, 
\begin{align}\label{eq:errorbound}
\mathrm{dist}(\bm{x}, H\cap \mathcal{K}) &\le c \left(
\mathrm{dist}(\bm{x}, H) + \mathrm{dist}(\bm{x},  \mathcal{K}) 
\right). 
\end{align}
Here $\mathrm{dist}(\bm{x}, A)=\inf\{\|\bm{x}-\bm{y}\| : \bm{y}\in A\}$ for any set $A\subset\mathbb{R}^{n+1}$, which means that the distance from the point $\bm{x}$ to the set $A$. 
\end{proposition}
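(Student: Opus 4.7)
The plan is to exploit the fact that, under the hypothesis $H\cap\mathcal{K}=\{\bm{0}\}$, both sets are actually cones through the origin, and then reduce the inequality to a compactness argument on the unit sphere. First I would observe that $\bm{0}\in H\cap\mathcal{K}$ forces the affine subspace $H$ to pass through the origin, hence $H$ is a linear subspace; in particular $H$ is invariant under positive scaling, and so is $\mathcal{K}$. Consequently the three quantities $\|\bm{x}\|=\mathrm{dist}(\bm{x},H\cap\mathcal{K})$, $\mathrm{dist}(\bm{x},H)$ and $\mathrm{dist}(\bm{x},\mathcal{K})$ are all positively homogeneous of degree $1$ in $\bm{x}$. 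Hence, dividing through by $\|\bm{x}\|$ (the case $\bm{x}=\bm{0}$ is trivial), it suffices to establish
\[
1 \;\le\; c\bigl(\mathrm{dist}(\bm{x},H)+\mathrm{dist}(\bm{x},\mathcal{K})\bigr)
\]
for every $\bm{x}$ on the unit sphere $S=\{\bm{x}\in\mathbb{R}^{n+1}:\|\bm{x}\|=1\}$.

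Next I would prove this sphere inequality by contradiction. Suppose no such constant $c$ exists; then there is a sequence $\{\bm{x}_k\}\subset S$ with
\[
\mathrm{dist}(\bm{x}_k,H)+\mathrm{dist}(\bm{x}_k,\mathcal{K})\;\longrightarrow\;0.
\]
Since $S$ is compact, passing to a subsequence I may assume $\bm{x}_k\to\bm{x}_*$ for some $\bm{x}_*\in S$, in particular $\|\bm{x}_*\|=1$. The distance functions to closed sets are continuous, so $\mathrm{dist}(\bm{x}_*,H)=0$ and $\mathrm{dist}(\bm{x}_*,\mathcal{K})=0$; since $H$ and $\mathcal{K}$ are closed, this yields $\bm{x}_*\in H\cap\mathcal{K}=\{\bm{0}\}$, contradicting $\|\bm{x}_*\|=1$. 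This furnishes the required $c>0$ on $S$, and homogeneity propagates it to all of $\mathbb{R}^{n+1}$.

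The argument is short and essentially soft; the only substantive point is the reduction step (homogeneity), which would fail for a general affine $H$ not passing through the origin. In the present case the hypothesis $H\cap\mathcal{K}=\{\bm{0}\}$ automatically supplies the required conic structure, so no additional work is needed. I expect the main conceptual obstacle a reader might anticipate, namely that singularity degree $1$ (as given by Proposition~\ref{prop:fr}) would only yield H\"older regularity with exponent $1/2$ via \cite{Drusvyatskiy2017}, is sidestepped entirely by the homogeneity trick, which is why the bound we obtain here is genuinely linear rather than merely H\"olderian.
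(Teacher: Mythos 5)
Your argument is correct: since $\bm{0}\in H$ the affine subspace is in fact linear, all three distance functions are positively homogeneous of degree one, and the reduction to the unit sphere followed by the compactness/contradiction step (the continuous function $\bm{x}\mapsto\mathrm{dist}(\bm{x},H)+\mathrm{dist}(\bm{x},\mathcal{K})$ attains a minimum on $S$, which cannot be zero because $H$ and $\mathcal{K}$ are closed and meet only at the origin) is airtight. It is worth noting that the paper does not actually prove Proposition~\ref{prop:linearreg}: it states it as a known fact, points to \cite[Section 2]{Sturm2000} for the analogous statement about the positive semidefinite cone, and only records the homogeneity identity $\mathrm{dist}(\alpha\bm{x},A)=|\alpha|\,\mathrm{dist}(\bm{x},A)$ for closed convex cones as a remark. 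So your proposal supplies a complete, self-contained, elementary proof where the paper offers only a citation; the homogeneity remark in the paper is precisely the reduction step you carry out, so your route is the natural completion of what the authors left implicit. One small point of care: homogeneity under \emph{negative} scaling fails for $\mathrm{dist}(\cdot,\mathcal{K})$ (and the paper's remark as stated with $|\alpha|$ for all $\alpha\in\mathbb{R}$ is not quite right for a pointed cone), but your proof only ever divides by $\|\bm{x}\|>0$, so only positive homogeneity is used and nothing breaks.
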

When the set $A$ is a closed convex cone, %the distance function is positively homogeneous, i.e., 
we have $\mathrm{dist}(\alpha\bm{x}, A) =|\alpha|\mathrm{dist}(\bm{x}, A)$ for any $\alpha\in\mathbb{R}$. 

It should be noted that this is also mentioned in \cite[Section 2]{Sturm2000} for the intersection of the positive semidefinite cone and an affine subspace.
Hence, it follows from \cite[Proposition 4.2]{Borwein2014} or \cite[Theorem 2.2]{Drusvyatskiy2017} that the convergence rate of the APM is linear. 

\subsection{Case 2: $H\cap\bd\mathcal{K}=\{\bm{u}_*\}$ and $\bm{u}_{*}\neq \bm{0}$}

%Note that  we assumed in Assumption \ref{assumption} that the APM does not terminate in finitely many iterations. Thus, for any $k\in\mathbb{N}$, $\bm{u}_k\in H\setminus\{\bm{u}_*\}$, but $\bm{u}_k\not\in\mathcal{K}$. Furthermore, it follows from \ref{lemma:P3} of Lemma ~\ref{lemma:socp} that for all $k$, the projection of $\bm{u}_k$ onto $\mathcal{K}$ is the first component of the spectral decomposition of $\bm{u}_k$. 

The following proposition analyzes one iteration of the APM. We postpone the proof in Appendix~\ref{sec:proofOfprop}. 
\begin{lemma}\label{prop:updaterule}
Let $\bm{u}\in H$ and $\tilde{\bm{u}} = P_H(P_{\mathcal{K}}(\bm{u}))$. Moreover, we write $\bm{u}$ and $\tilde{\bm{u}}$ as
$
\bm{u} = \bm{u}_*+\bm{B}\bm{t}$ and $\tilde{\bm{u}} = \bm{u}_*+\bm{B}\tilde{\bm{t}}$, respectively. Assume that $\bm{u}\neq\bm{u}_*$ and $\bm{u}, \tilde{\bm{u}}\not\in\mathcal{K}$. Then, we have
\begin{align}
    \label{update}
    \tilde{\bm{t}} &=\bm{B}^T(\lambda_1\bm{e}_1 -\bm{u}_*),
\end{align}
where $\lambda_1(\bm{t})$ and $\bm{e}_1$ are the coefficients and associated vector of the first component of the spectral decomposition of $\bm{u}$. 
\end{lemma}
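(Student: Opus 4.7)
The plan is to unwind the definition $\tilde{\bm u}=P_H(P_{\mathcal K}(\bm u))$ into its two projection steps, determine the form of each projection under the given hypotheses, and then read off the coefficient vector $\tilde{\bm t}$ from the parameterization $\tilde{\bm u}=\bm u_*+\bm B\tilde{\bm t}$.

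For the inner projection, I would invoke Theorem~\ref{thm:pprojsocp} and rule out the first two cases. The assumption $\bm u\notin\mathcal K$ directly excludes the first case $P_{\mathcal K}(\bm u)=\bm u$. For the second case (where $P_{\mathcal K}(\bm u)=\bm 0$), the hypothesis is $-u_0\ge \|\bm u_1\|$, which is equivalent to $-\bm u\in\mathcal K$ and hence also to $-\hat{\bm u}\in\mathcal K$ since $\|-\bm u_1\|=\|\bm u_1\|$. Because we are in the subsection with $H\cap\bd\mathcal K=\{\bm u_*\}$ and $\bm u\in H\setminus\{\bm u_*\}$, property \ref{lemma:P3} of Lemma~\ref{lemma:socp} guarantees $-\hat{\bm u}\notin\mathcal K$, eliminating the second case. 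Therefore the third case of Theorem~\ref{thm:pprojsocp} applies and yields $P_{\mathcal K}(\bm u)=\lambda_1\bm e_1$, where $(\lambda_1,\bm e_1)$ is the first component of the spectral decomposition \eqref{spectraldec} of $\bm u$.

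For the outer projection, I would use the standard formula for orthogonal projection onto the affine subspace $H=\{\bm u_*+\bm B\bm t:\bm t\in\mathbb R^p\}$. Because $\bm B^T\bm B=\bm I_p$ (so the columns of $\bm B$ form an orthonormal basis of the direction of $H$), we have $P_H(\bm v)=\bm u_*+\bm B\bm B^T(\bm v-\bm u_*)$ for any $\bm v\in\mathbb R^{n+1}$. Applying this with $\bm v=\lambda_1\bm e_1$ gives
\[
\tilde{\bm u}=\bm u_*+\bm B\bm B^T(\lambda_1\bm e_1-\bm u_*),
\]
and comparing with the parameterization $\tilde{\bm u}=\bm u_*+\bm B\tilde{\bm t}$ (together with the injectivity of $\bm B$, or equivalently left-multiplying by $\bm B^T$ and using $\bm B^T\bm B=\bm I_p$) produces the claimed identity $\tilde{\bm t}=\bm B^T(\lambda_1\bm e_1-\bm u_*)$.

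The only nontrivial step is the case analysis for $P_{\mathcal K}(\bm u)$, and in particular the invocation of \ref{lemma:P3} to exclude $P_{\mathcal K}(\bm u)=\bm 0$; the assumption $\tilde{\bm u}\notin\mathcal K$ is not by itself enough to rule this out, since nothing would prevent $P_H(\bm 0)=\bm u_*-\bm B\bm B^T\bm u_*$ from lying outside $\mathcal K$, so the geometric input from Lemma~\ref{lemma:socp} is genuinely needed. Everything else is an algebraic consequence of orthonormality of the columns of $\bm B$.
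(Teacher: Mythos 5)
Your proposal is correct and follows essentially the same route as the paper: the projection onto $H$ is computed from the orthogonality condition together with $\bm{B}^T\bm{B}=\bm{I}_p$, and $P_{\mathcal{K}}(\bm{u})=\lambda_1\bm{e}_1$ is obtained from Lemma~\ref{lemma:socp}. You merely spell out the case analysis of Theorem~\ref{thm:pprojsocp} (including the reduction of $-\bm{u}\in\mathcal{K}$ to $-\hat{\bm{u}}\in\mathcal{K}$ and the use of \ref{lemma:P3}) that the paper's appendix leaves implicit.
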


We analyze this case using the APM update rule \eqref{update}. For this, we use $\bm{b}\bm{b}^T+\bm{B}_1^T\bm{B}_1=\bm{I}$ and $\bm{B}_1\bm{u}_{*1} = u_{*0}\bm{b}$. For simplicity, we introduce some notation and symbols as follows:
\begin{align*}
X &= u_{*0} +\bm{b}^T\bm{t}, Y = \|\bm{B}_1\bm{t}\|^2 -(\bm{b}^T\bm{t})^2=\|\bm{t}\|^2 -2(\bm{b}^T\bm{t})^2,\\
P &= \|\bm{u}_{*1} +\bm{B}_1\bm{t}\| = \sqrt{u_{*0}^2+2u_{*0}\bm{b}^T\bm{t} + \|\bm{B}_1\bm{t}\|^2} = \sqrt{X^2+Y}.
\end{align*}
We remark that $P-X = Y/(P+X)$. Then from \eqref{update}, we obtain 
%For this, we need to compute $\nabla \left(\lambda_2^2(\bm{t})/2\right)$. 
\begin{align*}
\tilde{\bm{t}}&=\frac{\lambda_1}{\sqrt{2}}\left(\frac{1}{\|\bm{u}_{*1}+\bm{B}_1\bm{t}\|}\bm{t} + \frac{u_{*0}-\bm{b}^T\bm{t}+\|\bm{u}_{*1}+\bm{B}_1\bm{t}\|}{\|\bm{u}_{*1}+\bm{B}_1\bm{t}\|}\bm{b}\right)-2u_{*0}\bm{b}\\
&=\frac{1}{2}\left(\frac{u_{*0}^2-(\bm{b}^T\bm{t})^2 -2u_{*0} \|\bm{u}_{*1}+\bm{B}_1\bm{t}\| + \|\bm{u}_{*1}+\bm{B}_1\bm{t}\| ^2}{\|\bm{u}_{*1} + \bm{B}_1\bm{t}\|}\right)\bm{b} + \frac{1}{2}\left(\frac{u_{*0}+\bm{b}^T\bm{t}}{\|\bm{u}_{*1}+\bm{B}_1\bm{t}\|}+1\right)\bm{t}\\
&= -\frac{1}{2}\left(
    \frac{X}{P}-1
    \right)
    \left(P-X+2\bm{b}^T\bm{t}\right)\bm{b}+ \frac{1}{2}\left(\frac{X}{P}-1\right)\bm{t}+\bm{t}\\%\label{eq1}
&= \bm{t} -\frac{1}{2}\left(1-\frac{X}{P}\right)(\bm{t}-2(\bm{b}^T\bm{t})\bm{b}) + \frac{1}{2}\left(1-\frac{X}{P}\right)(P-X)\bm{b}\\
&=\bm{t} - \frac{Y}{2P(P+X)}(\bm{t}-2(\bm{b}^T\bm{t})\bm{b}) + \frac{Y^2}{2P(P+X)^2}\bm{b}.
\end{align*}
Here we have $X = u_{*0} + O(\|\bm{t}\|)$, $P = u_{*0} + O(\|\bm{t}\|)$, $Y^2 = O(\|\bm{t}\|^4)$, and 
\begin{align*}
%X&= u_{*0} + O(\|\bm{t}\|), P = u_{*0} + O(\|\bm{t}\|), Y^2 = O(\|\bm{t}\|^4)\\
2P(P+X)&= 4u_{*0}^2 + O(\|\bm{t}\|), 2P(P+X)^2 = 8u_{*0}^3 + O(\|\bm{t}||).
\end{align*}
Using these equations, we obtain 
\begin{align*}
    %\label{eq2}
    %\nonumber
    \tilde{\bm{t}} &= \bm{t} - \frac{1}{2}\left(
    \frac{\|\bm{t}\|^2 - 2(\bm{b}^T\bm{t})^2}{2u_{*0}^2}
    \right)\left(\bm{t}-2(\bm{b}^T\bm{t})\bm{b}\right) + O(\|\bm{t}\|^4). 
\end{align*}
Therefore, we obtain the following update rule for this case:
\[
\bm{t}_{k+1} =\bm{t}_k - \frac{1}{2}\left(
    \frac{\|\bm{t}_k\|^2 - 2(\bm{b}^T\bm{t}_k)^2}{2u_{*0}^2}
    \right)\left(\bm{t}_k-2(\bm{b}^T\bm{t}_k)\bm{b}\right) + O(\|\bm{t}_k\|^4) \ (k=0, 1, \ldots).
\]
From this equation, we obtain
\[
\|\bm{t}_{k+1}\|^2 = \|\bm{t}_k\|^2 -\frac{1}{2}\left(
\frac{\|\bm{t}_k\|^2 - 2(\bm{b}^T\bm{t}_k)^2}{u_{*0}}
\right)^2 + O(\|\bm{t}_k\|^5) \ (k=0, 1, \ldots, ).
\]
Using the Taylor expansion of $\sqrt{1-X}$ around $X=0$, we can reduce the equation as follows:
\begin{align}\label{eq3}
\|\bm{t}_{k+1}\|&=\|\bm{t}_k\|\left(
1-\frac{1}{4\|\bm{t}_k\|^2}\left(
\frac{\|\bm{t}_k\|^2 -2(\bm{b}^T\bm{t}_k)^2}{u_{*0}}
\right)^2 + O(\|\bm{t}_k\|^3)
\right). 
\end{align}

We can prove the following theorem by using \eqref{eq3} and Lemma \ref{lemma1}. Item \ref{Conv2} of Theorem \ref{thm:convrate} follows from this theorem. 
\begin{theorem}\label{thm:case2}
The following hold:
\begin{enumerate}[label=(\arabic*)]
\item \label{Q1} If $p=1$, then $\displaystyle\lim_{k\to\infty}\left(\frac{1-2b^2}{\sqrt{2}u_{*0}}\right)^2k^{1/2}|t_k| = 1$. 
\item\label{Q2} If $p\ge 2$, then we have 
\[
\sqrt{2}u_{*0}\le \liminf_{k\to\infty}k^{1/2}\|\bm{t}_k\|, \limsup_{k\to\infty}k^{1/2}\|\bm{t}_k\|\le \frac{\sqrt{2}u_{*0}}{1-2\|\bm{b}\|^2}. 
\]
\end{enumerate}
\end{theorem}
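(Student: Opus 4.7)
The plan is to start from the asymptotic recursion \eqref{eq3} and apply the appropriate items of Lemma~\ref{lemma1} to extract the sharp rate. The preparatory step is to observe that $\|\bm{t}_k\|\to 0$, which follows from the standard convergence of the APM on two closed convex sets together with the identity $\|\bm{u}_k-\bm{u}_*\|=\|\bm{t}_k\|$; items \ref{lemma:P3} and \ref{lemma:P6} of Lemma~\ref{lemma:socp} ensure that $\bm{u}_k\notin\mathcal{K}$ at every step, so Lemma~\ref{prop:updaterule} is applicable at every iteration and the derivation of \eqref{eq3} is justified from some index onwards.

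For \ref{Q1}, since $\bm{b}$ and $\bm{t}_k$ are scalars, $\|\bm{t}_k\|^2-2(\bm{b}^T\bm{t}_k)^2=(1-2b^2)t_k^2$ and \eqref{eq3} collapses to
\[
|t_{k+1}|=|t_k|\left(1-\frac{(1-2b^2)^2}{4u_{*0}^2}\,t_k^2+O(|t_k|^3)\right).
\]
This is exactly the setting of \ref{L4} with $q=2$: the bracketed factor has the form $1-|t_k|^2 f(x_k,|t_k|)$ where $f$ is continuous at $(x_\infty,0)$ with limiting value $(1-2b^2)^2/(4u_{*0}^2)$. One may take the auxiliary input $x_k$ to encode the sign of $t_k$, which stabilizes because the leading correction is cubic in $t_k$ and preserves the sign of $t_k$ for small $|t_k|$. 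Applying \ref{L4} then yields $k\,t_k^2\to 2u_{*0}^2/(1-2b^2)^2$, which is the claimed equality.

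For \ref{Q2}, set $r_k:=(\bm{b}^T\bm{t}_k)^2/\|\bm{t}_k\|^2$. By Cauchy–Schwarz and \ref{lemma:P2}, $r_k\in[0,\|\bm{b}\|^2]\subset[0,1/2)$, and hence $(1-2r_k)^2\in[(1-2\|\bm{b}\|^2)^2,\,1]$. Substituting the pointwise extremes into the negative quadratic term of \eqref{eq3} produces the two one-sided recursions
\[
\|\bm{t}_{k+1}\|\le \|\bm{t}_k\|\left(1-\frac{(1-2\|\bm{b}\|^2)^2}{4u_{*0}^2}\|\bm{t}_k\|^2+O(\|\bm{t}_k\|^3)\right),
\]
\[
\|\bm{t}_{k+1}\|\ge \|\bm{t}_k\|\left(1-\frac{1}{4u_{*0}^2}\|\bm{t}_k\|^2+O(\|\bm{t}_k\|^3)\right).
\]
Feeding the first into \ref{L2} and the second into \ref{L3}, both with $q=2$, delivers the announced upper bound on $\limsup_{k\to\infty} k^{1/2}\|\bm{t}_k\|$ and the lower bound on $\liminf_{k\to\infty} k^{1/2}\|\bm{t}_k\|$.

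The main technical obstacle I anticipate is a bookkeeping point: the $O(\|\bm{t}_k\|^3)$ remainders must be cast in the exact shape $\|\bm{t}_k\|^{q+1}h(\|\bm{t}_k\|)$ with $h$ a convergent power series, as the hypotheses of Lemma~\ref{lemma1} demand. Since $u_{*0}>0$, the factor $1/P=1/\sqrt{X^2+Y}$ is real-analytic in $\bm{t}$ near the origin, so the remainder inherits this analyticity; what must be checked is that it is uniformly dominated by a single-variable convergent series in $\|\bm{t}_k\|$ so that the hypotheses of \ref{L2} and \ref{L3} are literally met. This is routine but has to be done carefully in both parts, and in \ref{Q1} it additionally requires tracking the sign of $t_k$ inside the cubic remainder of the scalar recursion.
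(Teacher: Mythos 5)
Your proposal is correct and follows essentially the same route as the paper: both start from the recursion \eqref{eq3}, and for \ref{Q2} both use the Cauchy--Schwarz bounds $(1-2\|\bm{b}\|^2)\|\bm{t}\|^2\le\|\bm{t}\|^2-2(\bm{b}^T\bm{t})^2\le\|\bm{t}\|^2$ to produce the two one-sided recursions fed into \ref{L2} and \ref{L3}. The only cosmetic difference is that for \ref{Q1} the paper applies \ref{L2} and \ref{L3} together to the scalar recursion rather than \ref{L4}, which sidesteps the sign-tracking issue you flag but yields the same limit.
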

\begin{remark}
In \ref{Q1} of Theorem \ref{thm:case2}, the vectors $\bm{b}$ and $\bm{t}_k$ are not written with the bold font because $p=1$. Moreover, it follows from Theorem~\ref{thm:case2} that the convergence rate of the APM for Case 2 is $\Theta(k^{-1/2})$.  Indeed, it is clear that $|t_k|=\Theta(k^{-1/2})$ when $p=1$. In addition, it follows from the inequalities in \ref{Q2} of Theorem \ref{thm:case2} that for a fixed $\epsilon >0$, there exists $\tilde{k}\in\mathbb{N}$ such that for all $k\ge \tilde{k}$, 
\[
k^{-1/2}(\sqrt{2}u_{*0} -\epsilon) \le \|\bm{t}_k\|\le k^{-1/2}\left(\frac{\sqrt{2}u_{*0}}{1-2\|\bm{b}\|^2} + \epsilon\right).
\]
This implies $\|\bm{t}_k\| = \Theta(k^{-1/2})$. %Similarly, we can prove $|t_k| = \Theta(k^{-1/2})$ when $p=1$. 
\end{remark}

%\subsubsection{Proof of \ref{Q1} in Theorem \ref{thm:case2}}
\begin{proof}[Proof of Theorem \ref{thm:case2}] If $p=1$, then  \eqref{eq3} can be rewritten as
\[
|t_{k+1}| = |t_k|\left(1 -\frac{1}{4}\left(\frac{1-2b^2}{u_{*0}}\right)^2|t_k|^2 +O(|t_k|^3)\right)  \ (k=0, 1, \ldots). 
\]
%Moreover, we assume in this theorem that $t_k\to 0$ \ $(k\to\infty)$. 
Applying \ref{L2} and \ref{L3} of Lemma \ref{lemma1}, we obtain the desired equation.

It follows from Cauchy-Schwarz inequality that $(1-2\|\bm{b}\|^2)\|\bm{t}\|^2 \le \|\bm{t}\|^2 - 2(\bm{b}^T\bm{t})^2\le \|\bm{t}\|^2$ for all $\bm{t}\in\mathbb{R}^p$. Using these inequalities, we obtain 
\begin{align*}
\|\bm{t}_{k+1}\| &\ge \|\bm{t}_k\|\left(1 - \displaystyle\frac{\|\bm{t}_k\|^2}{4u_{*0}^2} + O(\|\bm{t}_k\|^3)\right), \\
\|\bm{t}_{k+1}\| &\le \|\bm{t}_k\|\left(1 - \displaystyle\frac{1}{4}\left(\frac{1-2\|\bm{b}\|^2}{u_{*0}}\right)^2\|\bm{t}_k\|^2 + O(\|\bm{t}_k\|^3)\right), 
\end{align*}
for all $k=0, 1, \ldots$. Applying \ref{L2} and \ref{L3}  of Lemma \ref{lemma1} to these inequalities, we obtain the desired inequalities. 
\end{proof}

\subsection{Case 3: $H\cap\bd\mathcal{K}=C(\bm{d})$}\label{sec:Case3} Recall that $\|\bm{b}\|^2 =1/2$ from \ref{lemma:P4} of Lemma ~\ref{lemma:socp}. Using this property, we can write $\bm{d}$ as $\bm{d}=\bm{B}\bm{b}/\|\bm{b}\|$. It is clear that $\bm{d}\in\bd\mathcal{K}$. Also, we rewrite the affine subspace $H$ in terms of an orthogonal matrix as follows:
\[
H=\left\{\bm{D}\bm{t} : \bm{t}\in\mathbb{R}^p
\right\}, \bm{D} := \begin{bmatrix}
\bm{d} & \tilde{\bm{D}}
\end{bmatrix}:=\begin{bmatrix}
1/\sqrt{2} & \bm{0}^T\\
\bm{B}_1\bm{b}/\|\bm{b}\| & \bm{D}_1
\end{bmatrix}, 
\]
where the matrix $\bm{D}_1$ satisfies $\bm{D}_1^T\bm{D}_1=\bm{I}_{p-1}$ and $\bm{b}^T\bm{B}_1^T\bm{D}_1=\bm{0}$. 

For $\bm{t}=(x; \bm{y})\in\mathbb{R}^p$, $\lambda_1$ and $\bm{e}_1$ in the spectral decomposition of $\bm{u}=\bm{D}\bm{t}$ are 
\begin{align*}
\lambda_1 &= \frac{1}{\sqrt{2}}\left(
\frac{x}{\sqrt{2}}+\|\bm{z}\|
\right)= \frac{1}{2}\left(
x+\sqrt{x^2 +2\|\bm{y}\|^2}
\right), \\
\bm{e}_1&=\frac{1}{\sqrt{2}}\left(1; \frac{\bm{z}}{\|\bm{z}\|}\right), \mbox{ where } \bm{z} = \frac{\bm{B}_1\bm{b}}{\|\bm{b}\|}x + \bm{D}_1\bm{y}. 
\end{align*}
We see $\|\bm{z}\| = \sqrt{x^2/2 + \|\bm{y}\|^2}$ by using $\bm{b}\bm{b}^T+\bm{B}_1^T\bm{B}_1=\bm{I}$. If $\bm{y}=\bm{0}$ and $x\ge 0$, then $\bm{u}\in C(\bm{d})$. If $\bm{y}=\bm{0}$ and $x<0$, then $P_{\mathcal{K}}(\bm{u}) =\bm{0}\in C(\bm{d})$. These facts mean that if we choose $x\bm{d}$ as the initial point of the APM, then it finds a point in $C(\bm{d})$ in at most one iteration. If $\bm{y}\neq \bm{0}$, we have $P_{\mathcal{K}}(\bm{u})=\lambda_1\bm{e}_1$. Thus, we can write $\tilde{\bm{t}}$ obtained by one iteration of the APM for $\bm{t}$ as follows: 
\[
\tilde{\bm{t}} = \lambda_1 \bm{D}^T\bm{e}_1 = \frac{1}{\sqrt{2}}\left(
\frac{x}{\sqrt{2}}+\|\bm{z}\|
\right)
\begin{bmatrix}
\frac{1}{2}+\frac{x}{2\sqrt{2}\|\bm{z}\|}\\
\frac{\bm{y}}{\sqrt{2}\|\bm{z}\|}
\end{bmatrix} = \frac{1}{2}\begin{bmatrix}
x + \frac{x^2+\|\bm{y}\|^2}{\sqrt{x^2+2\|\bm{y}\|^2}}\\
\bm{y} + \frac{x}{\sqrt{x^2+2\|\bm{y}\|^2}}\bm{y}
\end{bmatrix}. 
\]

We consider the sequence $\{\bm{t}_k\}$ generated by the APM with the initial point $\bm{t}_0=(x_0; \bm{y}_0)$, where $\bm{y}_0\neq \bm{0}$. For simplicity, we write $\bm{t}_k =(x_k; \bm{y}_k)$ for all $k$, and then we obtain the following recurrence formula:
\begin{align}\label{eq:recurrence}
\left\{
\begin{array}{lcl}
x_{k+1} &=&  \displaystyle\frac{1}{2}\left(
x_k + \frac{x_k^2+\|\bm{y}_k\|^2}{\sqrt{x_k^2+2\|\bm{y}_k\|^2}}\right), \\
\bm{y}_{k+1}&=& \displaystyle\frac{1}{2}\left(1 + \frac{x_k}{\sqrt{x_k^2+2\|\bm{y}_k\|^2}}\right)\bm{y}_k, 
\end{array}
\right. \ (k=0, 1, \ldots). 
\end{align}

From the recurrence formula \eqref{eq:recurrence}, we can see that (i) $\{x_k\}$ is monotonically increasing, (ii) $\{\|\bm{y}_k\|\}$ is monotonically decreasing,  (iii) $x_k  > 0$ for all $k=1, 2, \ldots$, and (iv) $\bm{y}_k\neq \bm{0}$ for all $k$ if $\bm{y}_0\neq\bm{0}$. 
Indeed, we can see these properties from the following inequalities: for any $\bm{y}\neq \bm{0}$, $
\left|x\right| < \sqrt{x^2+2\|\bm{y}\|^2}$ and $|x\sqrt{x^2+2\|\bm{y}\|^2}| < x^2+\|\bm{y}\|^2$.

We find the convergence rate of the APM. The convergence rate can be summarized in the following theorem. Item \ref{Conv3} of Theorem~\ref{thm:convrate} follows from this theorem.

\begin{theorem}\label{thm:case3}
The following hold for \eqref{eq:recurrence}:
\begin{enumerate}[label=(\arabic*)]
\item\label{Case3-1} If $\bm{y}_0 = \bm{0}$, then $(x_0, \bm{y}_0)\in H\cap\mathcal{K}$ or  $(x_1, \bm{y}_1)\in H\cap\mathcal{K}$. 
\item\label{Case3-2} If $\bm{y}_0\neq \bm{0}$, then the convergence rate of the APM is $\Theta(k^{-1/2})$. 
\end{enumerate}
\end{theorem}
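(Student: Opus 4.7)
For \ref{Case3-1}, observe that $\bm{y}_0=\bm{0}$ reduces $\bm{u}_0=\bm{D}\bm{t}_0$ to $x_0\bm{d}$. When $x_0\ge 0$ this point already lies in $C(\bm{d})=H\cap\mathcal{K}$; when $x_0<0$ one has $-\bm{u}_0\in\mathcal{K}$, so Theorem~\ref{thm:pprojsocp} gives $P_{\mathcal{K}}(\bm{u}_0)=\bm{0}$ and hence $\bm{u}_1=P_H(\bm{0})=\bm{0}\in H\cap\mathcal{K}$.

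For \ref{Case3-2}, my plan is to decouple the two components of \eqref{eq:recurrence} and apply \ref{L4} of Lemma~\ref{lemma1} to $\|\bm{y}_k\|$ and \ref{L5} to $x_k$. Since the APM for two closed convex sets converges, one has $\bm{t}_k\to\bm{t}_\infty$ with $\bm{D}\bm{t}_\infty\in C(\bm{d})$, which forces $\bm{t}_\infty=(x_\infty;\bm{0})$ for some $x_\infty\ge 0$. The four monotonicity/positivity properties listed after \eqref{eq:recurrence}, together with a one-line check from \eqref{eq:recurrence} that $x_1>0$ whenever $\bm{y}_0\neq\bm{0}$, upgrade this to $x_\infty\ge x_1>0$. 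This strict positivity is essential, because $x_\infty$ will appear in the denominator of every asymptotic quantity below.

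For the $\bm{y}$-component, I set $v_k:=\|\bm{y}_k\|^2$ and $s_k:=\sqrt{x_k^2+2v_k}$ and use the rationalization $s_k-x_k=2v_k/(s_k+x_k)$ to rewrite the second line of \eqref{eq:recurrence} exactly as
\[
v_{k+1}=v_k\left(1-v_k\cdot\frac{3s_k+x_k}{2s_k^2(s_k+x_k)}\right).
\]
The coefficient $f(x,v):=(3s+x)/(2s^2(s+x))$ with $s=\sqrt{x^2+2v}$ is continuous at $(x_\infty,0)$ with $f(x_\infty,0)=1/x_\infty^2>0$, so \ref{L4} of Lemma~\ref{lemma1} with $q=1$ yields $kv_k\to x_\infty^2$, i.e.\ $\|\bm{y}_k\|=\Theta(k^{-1/2})$.

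For the $x$-component, the identity $(x_k^2+v_k)^2-x_k^2 s_k^2=v_k^2$ rewrites the first line of \eqref{eq:recurrence} as
\[
x_{k+1}-x_k=\frac{v_k^2}{2s_k(x_k^2+v_k+x_ks_k)}.
\]
Substituting $v_k\sim x_\infty^2/k$ gives $k^2(x_{k+1}-x_k)\to x_\infty/4$, and \ref{L5} then delivers $|x_\infty-x_k|=\Theta(1/k)$. Because $\bm{D}$ is an isometry, $\|\bm{u}_k-\bm{u}_\infty\|^2=(x_\infty-x_k)^2+\|\bm{y}_k\|^2$, which is dominated by the $\|\bm{y}_k\|$ term and yields the claimed $\Theta(k^{-1/2})$ rate. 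The main obstacle will be algebraic rather than conceptual: Lemma~\ref{lemma1} is being used here to extract a matching lower bound as well as the upper bound, which rules out a naive Taylor expansion carrying an $O(\cdot)$ remainder in place of $f(x_k,v_k)$; the two rationalization identities above are precisely what converts \eqref{eq:recurrence} into the clean continuous-coefficient form the lemma requires.
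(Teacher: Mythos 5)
Your proposal is correct and follows essentially the same route as the paper: part (1) is the same one-line observation, and part (2) applies \ref{L4} of Lemma~\ref{lemma1} to the $\bm{y}$-component and \ref{L5} to the $x$-component after establishing $x_\infty>0$, differing only cosmetically in that you run \ref{L4} on $\|\bm{y}_k\|^2$ with $q=1$ rather than on $\|\bm{y}_k\|$ with $q=2$. Incidentally, your limit $k^2(x_{k+1}-x_k)\to x_\infty/4$ is the correct constant: the paper's identity $\frac{1}{2}\bigl(-x_k+\frac{x_k^2+\|\bm{y}_k\|^2}{\sqrt{x_k^2+2\|\bm{y}_k\|^2}}\bigr)=\frac{(\sqrt{x_k^2+2\|\bm{y}_k\|^2}-x_k)^2}{2\sqrt{x_k^2+2\|\bm{y}_k\|^2}}$ is off by a factor of $2$ (the right side should have $4$ in the denominator), though this does not affect the stated $\Theta(k^{-1})$ and $\Theta(k^{-1/2})$ rates.
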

\begin{proof}
\ref{Case3-1} : If $\bm{y}_0 =\bm{0}$, then $\bm{y}_1=\bm{0}$ and $x_1 = (x_0 + |x_0|)/2$. Thus, $(x_0, \bm{y}_0)\in H\cap\mathcal{K}$ or $(x_1, \bm{y}_1)\in H\cap\mathcal{K}$. 

\ref{Case3-2} : It follows from the property (iv) that the APM does not terminate in finitely many iterations. Furthermore, since the singularity degree of this case is $1$, it follows from \cite[Theorem 2.4]{Drusvyatskiy2017} that the sequence $\{(x_k; \bm{y}_k)\}$ converges to a point $(x_\infty; \bm{0})\in\mathbb{R}^p$ with $O(k^{-1/2})$ rate, where $x_\infty\ge 0$. In particular, from the property (i), $x_\infty$ should be positive. 

We find the convergence rate of the sequence $\{\|\bm{y}_k\|\}$. For this, we define $f(x, y)$ as $f(x, y) = \frac{\sqrt{x^2 +2y^2}-x}{2y^2\sqrt{x^2+2y^2}}$. %as follows:
%\[
%f(x, y) := \frac{1}{(x+\sqrt{x^2 + 2y^2})(\sqrt{x^2+2y^2})} = \frac{\sqrt{x^2 +2y^2}-x}{2y^2\sqrt{x^2+2y^2}}. 
%\]
Then we can make an equation on $\|\bm{y}_k\|$ from \eqref{eq:recurrence} as follows:
\begin{align*}
\|\bm{y}_{k+1}\| &=\frac{\|\bm{y}_k\|}{2}\left(1 +\frac{x_k}{\sqrt{x_k^2 + 2\|\bm{y}_k\|^2}}\right) = \|\bm{y}_k\|\left(1 - \frac{1}{2}\left(\frac{\sqrt{x_k^2+2\|\bm{y}_k\|^2}-x_k}{\sqrt{x_k^2 + 2\|\bm{y}_k\|^2}}\right)\right)\\
&=\|\bm{y}_k\|\left(1 - \|\bm{y}_k\|^2 f(x_k, \|\bm{y}_k\|)\right) \ (k=0, 1, \ldots). 
\end{align*}
It follows from \ref{L4} of Lemma~\ref{lemma1} that $\lim_{k\to\infty}1/(k\|\bm{y}_k\|^2) = 2f(x_\infty, 0) = 1/x_\infty^2$, and thus $\|\bm{y}_k\|=\Theta(k^{-1/2})$. 
%
%Then, we obtain the following inequalities:
%\[
%\frac{\|\bm{y}_k\|}{2}\left(1+\left(\sqrt{1+2\left(\frac{\|\bm{y}_k\|}{a-\epsilon}\right)^2}\right)^{-1}\right)  < \|\bm{y}_{k+1}\| < \frac{\|\bm{y}_k\|}{2}\left(1+\left(\sqrt{1+2\left(\frac{\|\bm{y}_k\|}{a}\right)^2}\right)^{-1}\right) 
%\]
%for all sufficiently large $k$. By using the Taylor expansion of the function $1/\sqrt{1+X}$ around $X=0$, we can obtain 
%\[
%\|\bm{y}_k\|\left(1-\frac{\|\bm{y}_k\|^2}{2(a-\epsilon)^2} + \|\bm{y}_k\|^3h_1(\|\bm{y}_k\|)\right) < \|\bm{y}_{k+1}\| < \|\bm{y}_k\|\left(1-\frac{\|\bm{y}_k\|^2}{2a^2} + \|\bm{y}_k\|^3h_2(\|\bm{y}_k\|)\right)
%\]
%for all sufficiently large $k$, where $h_1$ and $h_2$ are convergent power series with $h_1(0) = h_2(0) = 0$. By applying Lemma ~\ref{lemma1}, we obtain $\|\bm{y}_k\| = \Theta(k^{-1/2})$. 
%

We consider the convergence rate of the sequence $\{x_k\}$. For this, we define a sequence $\{a_k\}\subset\mathbb{R}$ with   
\[
a_k := \frac{k^2\left(\sqrt{x_k^2+2\|\bm{y}_k\|^2} -x_k\right)^2}{2\sqrt{x_k^2+2\|\bm{y}_k\|^2}} = \frac{2k^2\|\bm{y}_k\|^4}{\sqrt{x_k^2 + 2\|\bm{y}_k\|^2}(\sqrt{x_k^2+2\|\bm{y}_k\|^2} +x_k)^2}\ (k=0, 1, \ldots)
\]
We note that $\displaystyle\lim_{k\to\infty}a_k = x_\infty/2$. Then we have 
\begin{align*}
x_{k+1}-x_k &= \frac{1}{2}\left(
-x_k + \frac{x_k^2 + \|\bm{y}_{k}\|^2}{\sqrt{x_k^2 + 2\|\bm{y}_{k}\|^2}}
\right) = \frac{\left(\sqrt{x_k^2 + 2\|\bm{y}_{k}\|^2} - x_k\right)^2}{2\sqrt{x_k^2 + 2\|\bm{y}_k\|^2}} = \frac{a_k}{k^2}. 
\end{align*}
It follows from \ref{L5} of Lemma~\ref{lemma1} that $\displaystyle\lim_{k\to\infty}k(x_\infty - x_k) =x_\infty/2$, which implies $x_\infty-x_k=\Theta(1/k)$. Therefore, $\|\bm{t}_k-(x_\infty; \bm{0})\|=\|(x_k; \bm{y}_k)-(x_\infty; \bm{0})\|=\Theta(k^{-1/2})$.
%As we have mentioned, $\|(x_k; \bm{y}_k)-(a; \bm{0})\|=O(k^{-1/2})$ holds. Since we have $\|\bm{y}_k\| = \Theta(k^{-1/2})$, it follows from \ref{L3} of Lemma~\ref{lemma1} that for all sufficiently large $k$, 
%\[
%(a-\epsilon) k^{-1/2} \le \|\bm{y}_k\|\le \|\bm{t}_k-(a; \bm{0})\| =\|(x_k; \bm{y}_k)-(a; \bm{0})\| \le d_1 k^{-1/2}, 
%\]
%where $d_1$ is a positive constant number. Therefore, $\|\bm{t}_k-(a; \bm{0})\|=\|(x_k; \bm{y}_k)-(a; \bm{0})\|=\Theta(k^{-1/2})$.
\end{proof}

\begin{remark}
In the case where $H\cap\mathcal{K}=C(\bm{d})$ for some  $\bm{d}\in\bd\mathcal{K}\setminus\{\bm{0}\}$, the linear regularity may not hold. Indeed, we can show that the linear regularity fails in the second-order cone and the affine subspace defined by the following $\bm{u}_*$ and $\bm{B}$: 
\[
\bm{u}_*=\bm{0}, \bm{B} = \begin{bmatrix}
1/\sqrt{2} & 0\\
1/\sqrt{2} & 0\\
0 & 1
\end{bmatrix}. 
\]
Instead, the $1/2$-H{\"o}lder regularity holds. Thus, it follows from \cite[Proposition 4.2]{Borwein2014} and \cite[Theorem 2.2]{Drusvyatskiy2017} that the worst-case convergence rate of the APM is $O(k^{-1/2})$. Our result gives, in addition to this, a lower bound on the convergence rate. 
\end{remark}

\section{Examples of the intersection of an affine subspace and the product set of two second-order cones}\label{sec:product}

We consider the convergence rate of the APM for the intersection of an affine subspace and the product set of two second-order cones. In this case, since the singularity degree can be two, it follows from \cite[Theorem 6.7]{Lourenco2018} that the worst-case convergence rate of the APM is $O(k^{-1/6})$\footnote{More precisely, an error bound is provided by using the value called the distance to the partial polyhedral Slater's condition in \cite[Theorem 6.7]{Lourenco2018}. From the definition, the value is smaller than or equal to the singularity degree. Thus, the error bound with this value may give a better convergence rate of the APM. However, in all examples of this section, the value coincides with the singularity degree. Therefore, we can consider the convergence rate derived from singularity degree}. 

In this section, we introduce two examples. Since the singularity degrees of both examples are two, it is expected that the worst-case convergence rate is $O(k^{-1/6})$. However,  although the worst-case convergence rate $O(k^{-1/6})$ in the first example is tight, the worst-case convergence rate $O(k^{-1/6})$ in the second case is {\itshape not} tight. %Moreover, we see that a sequence generated by the APM with an initial guess is linearly converged to a point in the intersection. 
In other words, the convergence rate of the APM for any initial guess in the second example is faster than the previously known worst-case convergence rate $O(k^{-1/6})$. 

We briefly introduce the APM for the intersection of an affine subspace $H$ and the product set $K$ of two second-order cones. Let $K = \mathcal{K}_{m+1}\times \mathcal{K}_{n+1}$. We write the affine subspace $H\subset\mathbb{R}^{(m+1)+(n+1)}$ by
\[
H = \left\{
\begin{bmatrix}
    \bm{u}_* \\
    \bm{v}_* 
\end{bmatrix}+ \begin{bmatrix}
    \bm{B}\\
    \bm{C}
\end{bmatrix}\bm{t} : \bm{t}\in\mathbb{R}^p
\right\}. 
\]
Here, we assume that each column vector of the matrix $\begin{bmatrix}\bm{B}\\ \bm{C}\end{bmatrix}$ is orthonormal.  In addition, for a given $\bm{t}\in\mathbb{R}^p$, we define $\bm{u}(\bm{t})$ and $\bm{v}(\bm{t})$ by
\[
\bm{u}(\bm{t}) = \bm{u}_*+\bm{B}\bm{t} \mbox{ and } \bm{v}(\bm{t}) = \bm{v}_*+\bm{C}\bm{t}. 
\]
We describe one iteration of the APM. For this, we assume that the APM generates $(\bm{u}(\tilde{\bm{t}}); \bm{v}(\tilde{\bm{t}})) \in H $ from $(\bm{u}(\bm{t}); \bm{v}(\bm{t}))\in H$ with one  iteration. Then, the projection $P_K((\bm{u}(\bm{t}); \bm{v}(\bm{t}))$ of $(\bm{u}(\bm{t}); \bm{v}(\bm{t}))$ into $K$ is 
\[
P_K((\bm{u}(\bm{t}); \bm{v}(\bm{t}))) = (P_{\mathcal{K}}(\bm{u}(\bm{t}));P_{\mathcal{K}}(\bm{v}(\bm{t}))).
\]
Recall that $P_{\mathcal{K}}(\bm{u})$ is the projection of $\bm{u}$ into the second-order cone $\mathcal{K}$. Thus by applying the projection $P_H$ to the affine subspace $H$ for $P_K((\bm{u}(\bm{t}); \bm{v}(\bm{t})))$, we obtain 
\[
\tilde{\bm{t}} = \begin{bmatrix}
    \bm{B}^T & \bm{C}^T
\end{bmatrix}\begin{bmatrix}
    P_{\mathcal{K}}(\bm{u}(\bm{t}))-\bm{u}_*\\
    P_{\mathcal{K}}(\bm{v}(\bm{t}))-\bm{v}_*
\end{bmatrix}. 
\]
This is the update of the parameter $\bm{t}$ which spends one iteration of the APM. For a sequence $\{(\bm{u}_k; \bm{v}_k)\}$ of the APM, we can consider the sequence of parameters $\{\bm{t}_k\}_k$ of $\{(\bm{u}_k; \bm{v}_k)\}$. Then, as we have already discussed in Section~\ref{sec:convergence}, the convergence rate of $\{\bm{t}_k\}$ is the same as that of $\{(\bm{u}_k; \bm{v}_k)\}$. Therefore, we consider the convergence rate of $\{\bm{t}_k\}$ in the following examples. 

%We provide a sufficient condition for the singularity degree of a pair $(H, K)$ to be two. The following examples satisfy the sufficient condition. Recall that $\hat{\bm{u}} = (u_0; -\bm{u}_1)$ for $\bm{u} = (u_0; \bm{u}_1)$.
%
%\begin{proposition}
%We assume the following for an affine subspace $H$:
%\begin{enumerate}[label=(A\arabic*)]
%\item\label{A1} $\bm{u}_*\in \mathrm{bd}\mathcal{K}_{m+1}\setminus\{\bm{0}\}$, $\bm{v}_*\in \mathrm{bd}\mathcal{K}_{n+1}\setminus\{\bm{0}\}$, 
%\item \label{A2} $\bm{B}^T\hat{\bm{u}}_* =\bm{0}$, $\bm{C}^T\hat{\bm{v}}_*\neq \bm{0}$, and
%\item\label{A3} there exists $\tilde{\bm{x}}\neq 0$ such %that $\bm{u}_*^T\tilde{\bm{x}}$ and $\bm{B}^T\tilde{\bm{x}} + \bm{C}^T\hat{\bm{v}}_*=\bm{0}$. 
%\end{enumerate}
%Then the singularity degree of a pair $(H, K)$ is two.
%\end{proposition}
%
In the following examples, we deal with $K=\mathcal{K}_3\times \mathcal{K}_3$

\subsection{Example 1: the convergence rate $O(k^{-1/6})$ is tight}\label{example1}
We define the affine subspace $H$ by 
%Consider the intersection of a product set $K$ of two second-order cones and the affine subspace $H$ defined as follows:
\[
\bm{u}_* = \bm{v}_* =\begin{bmatrix}
1\\
1\\
0
\end{bmatrix}, \bm{B} = \begin{bmatrix}
    1/\sqrt{3} & 0\\
    1/\sqrt{3}  & 0\\
    0 & 1/\sqrt{6} 
\end{bmatrix}, \bm{C} = \begin{bmatrix}
  0 & 2/\sqrt{6}\\
  0 & 1/\sqrt{6}\\
  1/\sqrt{3} & 0
\end{bmatrix}. 
\]
Then we can prove that $H\cap K = \{(\bm{u}_*; \bm{v}_*)\}$ and that the singularity degree is two. In fact, the minimal face of $K$ is $C(\bm{u}_*)\times C(\bm{v}_*)$. Facial reduction finds this face by using the reducing certificates
$\bm{d}_1:=(1, -1, 0, 0, 0, 0)^T$, $\bm{d}_2:=(1, -1, -1, 1, -1, 0)^T$, which corresponds to $\bm{y}$ in Theorem~\ref{thm:fr}. Moreover, it is easy to prove that there does not exist any reducing certificate $\bm{y}$ such that $K\cap\{\bm{y}\}^\perp =C(\bm{u}_*)\times C(\bm{v}_*)$, which implies the singularity degree is not one. 

%The worst-case convergence rate of the APM is estimated in the following theorem. This 
The following theorem implies that the convergence rate of the APM for this example coincides with the worst-case  convergence rate induced by the error bound for this example. We give a proof of this theorem in Appendix \ref{app:convrateEx1}.

\begin{theorem}\label{thm:example1}
There exists an initial guess $(\bm{u}_0;\bm{v}_0)\in H$ such that the convergence rate of the generated sequence $\{(\bm{u}_k; \bm{v}_k)\}\subset H$ is $\Theta(k^{-1/6}
)$.
\end{theorem}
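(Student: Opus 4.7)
The plan is to derive an explicit asymptotic recursion for $\{\bm{t}_k\}=\{(t_{1,k};t_{2,k})\}\subset\mathbb{R}^2$, exhibit a slow/fast decomposition with a one-dimensional invariant slow manifold, and reduce to a recursion of the form $t_{1,k+1}=t_{1,k}(1-C\,t_{1,k}^{6}+O(t_{1,k}^{7}))$ with $C>0$. Lemma~\ref{lemma1}\ref{L2}--\ref{L3} with $q=6$ then yields $|t_{1,k}|=\Theta(k^{-1/6})$, and since $|t_{2,k}|=O(t_{1,k}^{2})$ on the slow manifold, $\|\bm{t}_k\|=\Theta(k^{-1/6})$ follows.

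First, the APM update is computed. Since $u_0(\bm{t})^{2}-\|\bm{u}_1(\bm{t})\|^{2}=-t_2^{2}/6\le 0$, the point $\bm{u}(\bm{t})$ lies outside $\mathcal{K}_3$ whenever $t_2\neq 0$, so its projection is given by the third branch of Theorem~\ref{thm:pprojsocp}. For $\bm{v}(\bm{t})$, the sign of $v_0(\bm{t})^{2}-\|\bm{v}_1(\bm{t})\|^{2}=2t_2/\sqrt{6}+t_2^{2}/2-t_1^{2}/3$ splits a neighborhood of $\bm{0}$ into two regimes. Adapting the derivation of Case 2 in Section~\ref{sec:convergence}---but noting that the analogue $\bm{C}_1^{T}\bm{v}_{*1}=v_{*0}\bm{c}$ fails, so a non-zero vector $\bm{s}_v=(0;-1/\sqrt{6})$ enters the expansion of $\bm{C}^{T}(P_\mathcal{K}(\bm{v})-\bm{v}_*)$---yields explicit power-series expressions for each block in terms of $(t_1,t_2)$.

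Next, one identifies the slow manifold. In the regime $\bm{v}(\bm{t})\not\in\mathcal{K}_3$, the linearization at $\bm{t}=\bm{0}$ is $\mathrm{diag}(1,11/12)$, so $t_2$ contracts geometrically onto an invariant curve $t_2=\phi(t_1)=a_2t_1^{2}+a_3t_1^{3}+\cdots$, while $t_1$ is neutral. Matching powers in the invariance equation $\phi(\tilde{t}_1)=\tilde{t}_2$ gives $a_2=1/\sqrt{6}$ (precisely tangent to the parabolic boundary between the two $\bm{v}$-regimes), $a_3=0$, $a_4=-\sqrt{6}/24$, and so on. Substituting $t_2=\phi(t_1)$ into the expansion of $\tilde{t}_1$, a cascade of cancellations occurs: the very coefficients $a_j$ that define the slow manifold are exactly those needed to annihilate the corresponding coefficients of $\tilde{t}_1-t_1$ at orders $t_1^{3},t_1^{4},t_1^{5},t_1^{6}$, leaving $\tilde{t}_1=t_1+c_7t_1^{7}+O(t_1^{8})$ with computable $c_7<0$. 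Choosing, for instance, $\bm{t}_0=(\varepsilon;\varepsilon^{2}/\sqrt{6})$ with $\varepsilon>0$ small, a stability argument shows the trajectory stays close to the slow manifold, whereupon Lemma~\ref{lemma1}\ref{L2}--\ref{L3} applied with $q=6$ and $C=-c_7$ delivers $|t_{1,k}|=\Theta(k^{-1/6})$.

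The principal obstacle is verifying the cancellation cascade rigorously. A priori one might expect $\tilde{t}_1-t_1$ to start at order $t_1^{3}$ or $t_1^{4}$, which would produce a much faster rate; the fact that six consecutive coefficients vanish---precisely producing the exponent $1/6$---reflects the singularity degree being $2$. Executing this requires a disciplined power-series expansion through order seven in $t_1$, together with a careful argument that, although the trajectory may oscillate across the $\bm{v}$-regime boundary (where the projection formulas differ), its dynamics is well-approximated by the slow-manifold reduction and the fast mode does not perturb the slow asymptotics.
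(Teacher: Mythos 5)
Your plan is the same strategy the paper executes in Appendix~\ref{app:convrateEx1}: pass to scaled coordinates, identify a slow curve $Y\approx X^2/2+\cdots$ tangent to the boundary of the $\bm{v}$-regime, observe that on this curve the drift in the slow direction only appears at order seven (the paper's $U(X)=X-X^7/96$, your $\tilde t_1=t_1+c_7t_1^7+\cdots$ with $c_7<0$), and then invoke Lemma~\ref{lemma1}\ref{L2}--\ref{L3} with $q=6$. Your structural observations (the contraction factor $11/12$ in the fast direction, the leading coefficient $1/\sqrt{6}$ of the slow curve, its tangency to the parabola separating the two projection regimes) all agree with the paper's formulas. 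However, as a proof the proposal is incomplete, and the two steps you yourself flag as ``the principal obstacle'' are precisely where all the content lies; nothing in the proposal actually establishes them.

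Concretely, what is missing and how the paper supplies it: (i) the order-seven cancellation is not verified --- you assert that the coefficients $a_j$ of the formal invariant curve annihilate the coefficients of $\tilde t_1-t_1$ up to order six, but this is exactly the computation that must be carried out, and the paper does it by exhibiting the explicit degree-six truncation $S(X)=X^2/2-3X^4/8+X^6/2$ and $U(X)=X-X^7/96$ and checking $|f(X,Y)-U(X)|<X^8$, $|g(X,Y)-S(U(X))|<X^7$ on the tube; (ii) the ``stability argument'' confining the orbit to the slow manifold is not supplied. A formal power-series invariant curve for a map with a neutral direction does not by itself control the actual orbit; the paper replaces this with an elementary, fully rigorous device, namely the forward-invariant tube $E(\delta)=\{|Y-S(X)|<X^7,\ 0\le X\le\delta\}$, whose invariance (Lemma~\ref{lemma:convrateEx1}) is proved by checking that $f$ and $g$ are monotone in $Y$ and evaluating them on the two boundary curves $Y=S(X)\pm X^7$. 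The same lemma also disposes of your worry about the orbit oscillating across the $\bm{v}$-regime boundary: on $E(\delta)$ one has $X^2-3Y^2-2Y\ge X^6/8+O(X^7)>0$ (the tube has width $X^7=o(X^6)$, so the sign is decided by the $X^6/8$ term), hence the single recurrence \eqref{recEx1} applies at every iteration and no regime switching occurs. Without (i) and (ii) the exponent $1/6$ is a conjecture supported by a linearization and a tangency observation, not a theorem; with them, your argument becomes the paper's.
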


\subsection{Example 2: the convergence rate $O(k^{-1/6})$ is NOT tight}\label{example2}
We define the affine subspace $H$ by 
\[
\bm{u}_* = \bm{v}_*=\bm{0}, \bm{B} = \begin{bmatrix}
    a & 0 &0\\
    a & 0 &0\\
    0 & 0 & 2b
\end{bmatrix}, \bm{C} = \begin{bmatrix}
  0 & a & b\\
  0 & a & -b\\
  0& 0& 2b
\end{bmatrix}, 
\]
where $a=1/\sqrt{2}$ and  $b=1/\sqrt{10}$. 
Then we can prove that $H\cap K = C(\bm{d})\times C(\bm{d})$, where $\bm{d} = (1, 1, 0)^T$. In addition, we can prove that the singularity degree is two. Indeed, the first reducing certificate is $\bm{d}_1:=(1, -1, 0, 0, 0, 0)^T$ and the second one is $\bm{d}_2 := (1, -1, -1, 1, -1, 0)^T$. Moreover, there does not exist any reducing certificate $\bm{y}\neq \bm{0}$ such that $K\cap\{\bm{y}\}^\perp = C(\bm{d})\times C(\bm{d})$, which is the minimal face of $H\cap K$. 

For $\bm{t}=(x, y, z)$, $(\bm{u}(\bm{t}); \bm{v}(\bm{t}))\in H$ is the form of 
\[
\bm{u}(\bm{t}) = \begin{bmatrix}
ax\\
ax\\
2bz
\end{bmatrix}, \bm{v}(\bm{t}) = \begin{bmatrix}
ay+bz\\
ay-bz\\
2bz
\end{bmatrix}.
\]
For simplicity, we define $X = ax$, $Y = ay$ and $Z=bz$ and consider the sequence $\{\bm{t}_k\} := \{(X_k, Y_k, Z_k)\}$ generated by the APM for this example. The following theorem implies that the worst-case convergence rate of the APM is better than the worst-case convergence rate induced by the error bound with the singularity degree of this example.  We will give the proof of this theorem in Appendix~\ref{app:property}. 
\begin{theorem}\label{thm:example2}
Let $(X_\infty, Y_\infty, Z_\infty)$ be the convergence point of the sequence generated by the APM. Then, the following hold. 
\begin{enumerate}[label=(C\arabic*)]
\item\label{C0} If $Z_0=0$, then $(X_0, Y_0, Z_0)\in H\cap K$ or $(X_1, Y_1, Z_1)\in H\cap K$.  
\item\label{C1} If $Z_0 > 0$, then we have $X_k-X_\infty = \Theta(k^{-1})$, $Y_k-Y_\infty = \Theta(k^{-1/2})$ and $|Z_k-Z_\infty|=\Theta(k^{-1/2})$. 
\item\label{C2} If $Z_0 < 0$, then we have $X_k-X_\infty = \Theta((4/5)^{4k})$, $Y_\infty-Y_k = \Theta((4/5)^{3k})$ and $|Z_k-Z_\infty|=\Theta((4/5)^k)$. 
\end{enumerate}
\end{theorem}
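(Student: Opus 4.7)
The plan is to derive the explicit APM recurrence in the rescaled coordinates $(X,Y,Z)$ and then analyze the three cases by asymptotic expansion around the limit point. By Theorem~\ref{thm:pprojsocp}, computing $P_{\mathcal{K}}(\bm{u}(\bm{t}))$ and $P_{\mathcal{K}}(\bm{v}(\bm{t}))$ for $\bm{u}(\bm{t})=(X,X,2Z)^T$ and $\bm{v}(\bm{t})=(Y+Z,Y-Z,2Z)^T$ requires a case split: $\bm{u}(\bm{t})\in \mathcal{K}_3$ only when $Z=0$ and $X\ge 0$, while $\bm{v}(\bm{t})\in \mathcal{K}_3$ iff $Y+Z\ge 0$ and $Z(Y-Z)\ge 0$. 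The sign of $Z$, and of $Y-Z$, thus dictates which branch of the projection formula applies. Setting $r_u=\sqrt{X^2+4Z^2}$ and $r_v=\sqrt{(Y-Z)^2+4Z^2}$ and substituting into $\tilde{\bm{t}} = \bm{B}^T P_{\mathcal{K}}(\bm{u}(\bm{t})) + \bm{C}^T P_{\mathcal{K}}(\bm{v}(\bm{t}))$ yields explicit rational formulas for $(\tilde X,\tilde Y,\tilde Z)$ that drive the analysis.

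Case~\ref{C0} is a short direct check: with $Z_0=0$, $\bm{u}_0$ and $\bm{v}_0$ reduce to $(X_0,X_0,0)$ and $(Y_0,Y_0,0)$, whose projections by Theorem~\ref{thm:pprojsocp} are immediate, and one APM step places the iterate into $C(\bm{d})\times C(\bm{d}) = H\cap K$. Case~\ref{C2} is handled by first verifying inductively that $Z_k<0$ for every $k$, so that the ``$\bm{v}\notin \mathcal{K}_3$'' branch is active throughout, and then Taylor-expanding the recurrence around the limit $(X_\infty,Y_\infty,0)$ with $X_\infty,Y_\infty>0$. The leading behavior is linear, $Z_{k+1}=(4/5)Z_k+\cdots$, while the higher-order corrections give $\tilde{Y}-Y = \Theta(Z^3)$ and $\tilde{X}-X = \Theta(Z^4)$; a geometric-summation argument then upgrades the $\Theta((4/5)^k)$ rate on $Z$ into the claimed $\Theta((4/5)^{3k})$ and $\Theta((4/5)^{4k})$ rates on $Y_\infty-Y_k$ and $X_k-X_\infty$, respectively, once one checks that the leading coefficients do not vanish.

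Case~\ref{C1} is the most delicate. Since $Z_k\to 0^+$ and $Y_k$ stays bounded away from zero, the iteration eventually enters the regime $0<Z_k<Y_k$, in which $\bm{v}(\bm{t}_k)\in \mathcal{K}_3$ and the recurrence simplifies. Using $r_u = X + 2Z^2/X + O(Z^4)$, one obtains
\[
\tilde{Z}=Z\!\left(1-\frac{2Z^2}{5X^2}+O(Z^3)\right), \qquad \tilde{X}-X=\frac{Z^4}{X^3}+O(Z^5).
\]
Part~\ref{L4} of Lemma~\ref{lemma1} with $q=2$ and $f(X_\infty,0)=2/(5X_\infty^2)$ gives $Z_k=\Theta(k^{-1/2})$, and part~\ref{L5} applied to the $X$-increments $X_{k+1}-X_k = a_k/k^2$ with $a_k\to$ const gives $|X_k-X_\infty|=\Theta(k^{-1})$. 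For the $Y$-rate, one tracks the cumulative change of $Y$ during the transient phase (when $Z_k\ge Y_k$ so that $\bm{v}(\bm{t}_k)\notin \mathcal{K}_3$ and $\tilde{Y}-Y$ is of order $Z^3$), and shows that summing these contributions gives the asserted $\Theta(k^{-1/2})$ behavior of $|Y_k-Y_\infty|$.

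The main obstacle is Case~\ref{C1}: one must first prove, by a monotonicity argument on the gap $Y_k-Z_k$, that the iteration leaves the ``$\bm{v}\notin \mathcal{K}_3$'' branch after finitely many steps, and then carry out the asymptotic analysis on the tail in the ``$\bm{v}\in \mathcal{K}_3$'' branch. Matching the multiplicative constants to obtain $\Theta$ rates (rather than mere $O$ rates) for every coordinate requires expanding the recurrence to higher order than is strictly needed for the upper bounds and verifying that each leading coefficient is nonzero---routine but lengthy. This is why the authors defer the full proof to Appendix~\ref{app:property}.
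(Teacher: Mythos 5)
Your overall strategy---deriving the branch-dependent recurrences in the coordinates $(X,Y,Z)$, determining which projection branch is active in the tail from the signs of $Z$ and $Y-Z$, and then applying \ref{L4} and \ref{L5} of Lemma~\ref{lemma1} when $Z_0>0$ and a geometric-product argument when $Z_0<0$---is essentially the paper's, and your outlines of \ref{C0} and \ref{C2} match the proof of Theorem~\ref{thm:example2F} in Appendix~\ref{app:property} (with the minor caveat that in \ref{C2}, to get a genuine $\Theta((4/5)^k)$ bound on $|Z_k|$ rather than an $O$ bound, you must verify that the infinite product of the multiplicative corrections converges to a nonzero limit, which the paper does by writing the correction as $1-|Z_k|b_k$ with $\{b_k\}$ bounded and $\sum_k|Z_k|<\infty$; your sketch should make this step explicit).

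There is, however, a genuine gap in your treatment of the $Y$-coordinate in case \ref{C1}. Once the iteration enters the regime $Y_k\ge Z_k>0$---which, as you note, happens after finitely many steps because $Z_k\to 0$ while $Y_k$ is increasing with positive limit---the point $\bm{v}(\bm{t}_k)$ lies in $\mathcal{K}_3$, so $P_{\mathcal{K}}(\bm{v}(\bm{t}_k))=\bm{v}(\bm{t}_k)$ and the recurrence \eqref{Proj3} gives $\tilde{Y}=Y$ \emph{exactly}. Hence $Y_k$ is constant for all $k\ge\tilde{k}$ and $Y_k-Y_\infty=0$ eventually. Your proposed mechanism for the $Y$-rate---summing the $\Theta(Z^3)$ increments accumulated during the transient phase---cannot yield a $\Theta(k^{-1/2})$ decay, because that sum is finite and is exhausted before the asymptotic regime begins; there is simply no residual decay of $Y_k$ to quantify. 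This is precisely what the paper's own appendix establishes: item \ref{CC1} of Theorem~\ref{thm:example2F} states that $Y_k$ is eventually constant, which is at odds with the $\Theta(k^{-1/2})$ claim for $Y_k-Y_\infty$ in the main-text statement you are proving. In other words, you are attempting to supply an argument for an assertion that the detailed analysis refutes, and no repair of the summation idea can succeed; the correct conclusion for the $Y$-coordinate in case \ref{C1} is eventual constancy, and the overall $\Theta(k^{-1/2})$ rate of the iterates in that case is carried entirely by the $Z$-coordinate.
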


\section*{Acknowledgement}
The first author was supported by JSPS KAKENHI Grant Number JP17K18726 and JSPS Grant-in-Aid for Transformative Research Areas (A) (22H05107). The second author was supported by JSPS KAKENHI Grant Number JP19K03631. The third author was supported by JSPS KAKENHI Grant Number JP20K11696 and ERATO HASUO Metamathematics for Systems Design Project (No.JPMJER1603), JST.

\appendix 
\section{Proofs of Lemmas~\ref{lemma:socp0}, \ref{lemma:matrix} and  \ref{lemma:socp}}\label{sec:proofoflemma}
\subsection{Proof of Lemma~\ref{lemma:socp0}}
Since $\bm{x}\in\bd\mathcal{K}$, $x_0=\|\bm{x}_1\|$. 
In addition, it follows from $\bm{x}^T\bm{y}=0$ and the Cauchy-Schwartz inequality that $y_0=\|\bm{y}_1\|$. Substituting $x_0=\|\bm{x}_1\|$ and $y_0=\|\bm{y}_1\|$ into $\bm{x}^T\bm{y}$, we obtain $\|\bm{x}_1\|\|\bm{y}_1\| = -\bm{x}_1^T\bm{y}_1$, and can conclude $\bm{y}_1 = -\alpha \bm{x}_1$ for some $\alpha \in\mathbb{R}$. In addition, we have $y_0 =\|\bm{y}_1\|=|\alpha|\|\bm{x}_1\|=|\alpha|x_0$. If $\alpha < 0$, then $\bm{y} = |\alpha|\bm{x}$, and this contradicts $\bm{x}^T\bm{y}=0$ and $\bm{x}\neq \bm{0}$. Therefore, $\alpha \ge 0$ and we can conclude $\bm{y}= \alpha \hat{\bm{x}}$ for some $\alpha\ge 0$. 

\subsection{Proof of Lemma~\ref{lemma:matrix}}
Let $\bm{C}:=\bm{I}_p-2\bm{b}\bm{b}^T$.  For a vector $\bm{q}$ orthogonal to $\bm{b}$, $\bm{C}\bm{q} = \bm{q}$. Since we can construct $(p-1)$ linearly independent vectors orthogonal to $\bm{b}$, the (algebraic) multiplicity with eigenvalue 1 is $(p-1)$. Also, since $\bm{C}\bm{b} = (1-2\|\bm{b}\|^2)\bm{b}$, the eigenvector is $\bm{b}$ with eigenvalue $1-2\|\bm{b}\|^2$. Since the size of the matrix $\bm{C}$ is $p$, the (algebraic) multiplicity is $1$.   

\subsection{Proof of \ref{lemma:P2} of Lemma~\ref{lemma:socp}}
$\bm{u}_*+\bm{B}\bm{t}\in\mathcal{K}$ is equivalent to $u_{*0} + \bm{b}^T\bm{t} \ge 0$ and 
\[
 (u_{*0}+\bm{b}^T\bm{t})^2 - \|\bm{u}_{*1} + \bm{B}_1\bm{t}\|^2=2(\bm{b}u_{*0}-\bm{B}_1^T\bm{u}_{*1})^T\bm{t} + \bm{t}
^T\left(2\bm{b}\bm{b}^T-\bm{I}_p\right)\bm{t} \ge 0. 
\]
Here we used $\bm{B}^T\bm{B}=\bm{b}\bm{b}^T + \bm{B}_1^T\bm{B}_1=\bm{I}_p$.
In the following, we consider two cases, $u_{*0}=0$ and $u_{*0}\neq 0$. 

First, if $u_{*0}=0$, then $\bm{u}_*=\bm{0}$. Therefore, $\bm{B}\bm{t}\in\mathcal{K}$ is equivalent to 
\[
\bm{b}^T\bm{t} \ge 0, \bm{t}^T\left(2\bm{b}\bm{b}^T-\bm{I}_p\right)\bm{t}\ge 0. 
\]
Hence, $H\cap\bd\mathcal{K}=\{\bm{0}\}$ if and only if for any $\bm{t}\neq \bm{0}$ with $\bm{b}^T\bm{t}\ge 0$, 
%\begin{align*}%\label{cond1}
$\bm{t}^T\left(2\bm{b}\bm{b}^T-\bm{I}_p\right)\bm{t} < 0$, 
%\end{align*}
Substituting $\bm{t}=\bm{b}$ into the last inequality, we obtain $2\|\bm{b}\|^2 < 1$. Furthermore, if $2\|\bm{b}\|^2 < 1$, then it follows from Lemma ~\ref{lemma:matrix} that  the matrix $2\bm{b}\bm{b}^T-\bm{I}_p$ is negative definite. Thus, under the assumption that $u_{*0}=0$, $H\cap\bd\mathcal{K}=\{\bm{0}\}$ if and only if $\|\bm{b}\|^2 < 1/2$. 

Next, we consider the case $u_{*0}\neq 0$. Since $u_{*0}>0$, the inequality $u_{*0}-\bm{b}^T\bm{t} \ge 0$ holds for $\|\bm{t}\|$ sufficiently close to 0. Thus, $H\cap \bd\mathcal{K}=\{\bm{u}_*\}$ if and only if for all $\bm{t}\in\mathbb{R}^p$, 
\[
2(\bm{b}u_{*0}-\bm{B}_1^T\bm{u}_{*1})^T\bm{t} + \bm{t}
^T\left(2\bm{b}\bm{b}^T-\bm{I}_p\right)\bm{t} \le  0. 
\]
If $\bm{b}u_{*0}-\bm{B}_1^T\bm{u}_{*1}\neq \bm{0}$, then we can construct $\bm{t}\neq \bm{0}$ that does not satisfy the above inequality. Thus $\bm{B}^T\bm{u}_*=\bm{b}u_{*0}-\bm{B}_1^T\bm{u}_{*1}= \bm{0}$ holds. Furthermore, the matrix $(2\bm{b}\bm{b}^T-\bm{I}_p)$ must be negative definite. 

Therefore, we can conclude that $H\cap \bd\mathcal{K}=\{\bm{u}_*\}$ if and only if $\bm{B}^T\bm{u}_*=\bm{0}$ and $2\|\bm{b}\|^2 < 1$. 
%$\bm{u}_*+\bm{B}\bm{t}\in\mathcal{K}$ is equivalent to
% \[
% u_*-\bm{b}^T\bm{t} \ge 0, (u_*-\bm{b}^T\bm{t})^2 -\|\bm{u}_*-\bm{B}_1\bm{t}\|^2 \ge 0.
% \]
% Since $u_*>0$, the inequality $u_*-\bm{b}^T\bm{t} \ge 0$ holds for $\|\bm{t}\|$ sufficiently close to 0. Thus, $H\cap \bd\mathcal{K}=\{\bm{u}_*\}$ if and only if for any $\bm{t}\neq \bm{0}$, 
% \begin{align}\label{cond3}
% 2(\bm{b}u_{*0}-\bm{B}_1^T\bm{u}_{*1})^T\bm{t} + \bm{t}
% ^T\left(\bm{I}_p-2\bm{b}\bm{b}^T\right)\bm{t} &< 0. 
% \end{align}
% If $\bm{b}u_{*0}-\bm{B}_1^T\bm{u}_{*1}\neq \bm{0}$, then there exists $\bm{t}\neq \bm{0}$ such that 
% \[
% 2(\bm{b}u_{*0}-\bm{B}_1^T\bm{u}_{*1})^T\bm{t} + \bm{t}
% ^T\left(\bm{I}_p-2\bm{b}\bm{b}^T\right)\bm{t} \ge 0. 
% \]
% Thus, $\bm{b}u_{*0}-\bm{B}_1^T\bm{u}_{*1}= \bm{0}$, which is equivalent to $\bm{B}^T\hat{\bm{u}}_*=\bm{0}$. Furthermore, if $\bm{I}_p-2\bm{b}\bm{b}^T\not\in\mathbb{S}^p_{++}$, then there  exists $\bm{t}\neq \bm{0}$ such that 
% \[
% 2(\bm{b}u_{*0}-\bm{B}_1^T\bm{u}_{*1})^T\bm{t} + \bm{t}
% ^T\left(\bm{I}_p-2\bm{b}\bm{b}^T\right)\bm{t} \ge 0. 
% \]

\subsection{Proof of \ref{lemma:P4} of Lemma~\ref{lemma:socp}}
Assume that $H\cap\mathcal{K}=\{\alpha \bm{d} : \alpha \ge 0\}$ for some $\bm{d}\in\bd\mathcal{K}\setminus\{\bm{0}\}$. Then there exists $\bm{t}\neq \bm{0}$ such that $\bm{B}\bm{t}\in\bd\mathcal{K}\setminus\{\bm{0}\}$.  Using $\bm{b}\bm{b}^T+\bm{B}_1^T\bm{B}_1=\bm{I}$, we obtain
\[
\bm{t}^T\left(\bm{I}-2\bm{b}\bm{b}^T\right)\bm{t} = 0. 
\]
If $\|\bm{b}\|^2<1/2$, then the matrix $\bm{I}-2\bm{b}\bm{b}^T$ is positive definite, and thus $\|\bm{b}\|^2\ge 1/2$ holds. 

Assume that $\|\bm{b}\|^2 > 1/2$. Then, it follows from  $\|\bm{B}_1\bm{b}\|=\|\bm{b}\|\sqrt{1-\|\bm{b}\|^2}$ that the inequality $\|\bm{b}\|^2 > \|\bm{B}_1\bm{b}\|$ holds. Thus $\bm{B}\bm{b}\in H \cap \inte \mathcal{K}$, which contradicts $H\cap\inte\mathcal{K}=\emptyset$. Therefore $\|\bm{b}\|^2 = 1/2$ holds. 

We assume that $\|\bm{b}\|^2 = 1/2$. Then from $\bm{b}\bm{b}^T + \bm{B}_1^T\bm{B}_1 = \bm{I}$, we obtain $\|\bm{B}\bm{b}\| = 1/2$. Thus, choosing $\bm{d} = \bm{B}\bm{b}/\|\bm{b}\|$, we obtain
$\{\alpha \bm{d} : \alpha\ge 0\} \subset H\cap\mathcal{K}$. Suppose that there exists $\bm{f}\in H\cap\bd\mathcal{K}\setminus\{\bm{0}\}$ such that $\bm{f}$ is not parallel to $\bm{d}$. Then $\bm{d}+\bm{f} \in H \cap \inte\mathcal{K}$, and this contradicts that $H\cap \inte\mathcal{K}=\emptyset$. Therefore, we obtain $\{\alpha \bm{d} : \alpha\ge \} = H\cap\mathcal{K}$.

\subsection{Proof of \ref{lemma:P3} of Lemma~\ref{lemma:socp}}%\label{sec:proofOfLemma23}
Define $\bm{u}=(u_0; \bm{u}_1)\in H\setminus\{\bm{u}_*\}$. %, we can write $-\hat{\bm{u}}=(-u_0; \bm{u}_1)$. 
If $-\hat{\bm{u}}\in\mathcal{K}$, then $-u_{*0}-\bm{b}^T\bm{t}\ge 0$ and $(-u_{*0}-\bm{b}^T\bm{t})^2-\|\bm{u}_{*1}+\bm{B}_1\bm{t}\|^2\ge 0$. However, since $\bm{u}_*\in\bd\mathcal{K}$, $\bm{B}\hat{\bm{u}}_*=\bm{0}$ and $\bm{I}_p-2\bm{b}\bm{b}^T\in\mathbb{S}^p_{++}$ from \ref{lemma:P2}, $\bm{t}=\bm{0}$ must hold. This contradicts $\bm{u}\neq\bm{u}_*$.  

\subsection{Proof of \ref{lemma:P6} of Lemma~\ref{lemma:socp}}
We assume $P_{\mathcal{K}}(\bm{u}) = \bm{u}_*$. We define $\bm{u} = \bm{u}_*+\bm{B}\bm{t}$ for some $\bm{t}\neq \bm{0}$.  It follows from Theorem~\ref{thm:pprojsocp} that $P_{\mathcal{K}}(\bm{u}) = \lambda_1\bm{e}_1$, where $\lambda_1$ is the first spectral of $\bm{u}$ and $\bm{e}_1$ is the eigenvector associated with $\lambda_1$. Thus we obtain $\lambda_1\bm{e}_1 = \bm{u}_*$. Then we have
\[
\left\{
\begin{array}{lcl}
u_{*0} -\bm{b}^T\bm{t}&=& \|\bm{u}_{*1} + \bm{B}_1\bm{t}\|,\\
\bm{0}&=& (u_0+\bm{b}^T\bm{t}+\|\bm{u}_{*1} + \bm{B}_1\bm{t}\|)\bm{u}_{*1} + (u_{*0} + \bm{b}^T\bm{t} - \|\bm{u}_{*1} + \bm{B}_1\bm{t}\|)\bm{B}_1\bm{t}.\\
\end{array}
\right.
\]
It follows from these equations that $\bm{0} = (\bm{b}^T\bm{t}) \bm{u}_{*1} + u_0\bm{B}_1\bm{t}$. Then we obtain $\bm{t}=\bm{0}$ by multiplying $\bm{B}_1^T$ from the left and using $\bm{B}_1^T\bm{u}_{*1} = u_{*0}\bm{b}$ and $\bm{b}\bm{b}^T + \bm{B}_1^T\bm{B} = \bm{I}$. This  contradicts $\bm{t}\neq \bm{0}$. 

\subsection{Proof of \ref{lemma:P5} of Lemma~\ref{lemma:socp}}
We remark that $\bm{u}_*=\bm{0}$. Define $\bm{u} = \bm{B}\bm{t}_0$ for some $\bm{t}_0\in\mathbb{R}^p\setminus\{\bm{0}\}$. We assume that $-\hat{\bm{u}}\in\mathcal{K}$. Then we have $-\bm{b}^T\bm{t}_0\ge 0$ and $-\bm{b}^T\bm{t}_0\ge \|\bm{B}_1\bm{t}_0\|$. These imply that $\bm{v}:=-\bm{B}\bm{t}_0\in\mathcal{K}$. Thus, we can write $\bm{v}=\alpha\bm{d}$ for some $\alpha >0$. Consequently, we obtain $\bm{u}=-\alpha \bm{d}$, and which contradicts $\bm{u}\not \in \{\alpha \bm{d} : \alpha \in\mathbb{R}\}$. 

\section{Proof of Proposition~\ref{prop:fr}}\label{sec:proofOfsd}
The faces of the second-order cone are $\{\bm{0}\}$, $C(\bm{d})$ and $\mathcal{K}$, where $\bm{d}\in\bd\mathcal{K}\setminus\{\bm{0}\}$. Among these faces,  we have  a relation $\{\bm{0}\}\subset C(\bm{d})\subset \mathcal{K}$. Furthermore, when $H\cap\mathcal{K}=\{\bm{u}_*\}$, we can find $\bm{y}=\hat{\bm{u}}_*$  in one iteration of facial reduction, and it generates $C(\bm{u}_*)$ as a face of $\mathcal{K}$. Then $H\cap\mathrm{ri}C(\bm{u}_*)\neq\emptyset$ and this means that $C(\bm{u}_*)$ is the minimal face of $\mathcal{K}$. 

Similarly, when  $H\cap\mathcal{K}=C(\bm{d})$, it follows from \ref{lemma:P4} of Lemma~\ref{lemma:socp} that we can choose $\bm{d} = \bm{B}\bm{b}$ and facial reduction generates $C(\bm{d})$ as a face of $\mathcal{K}$, and then $H\cap\mathrm{ri}C(\bm{d})\neq\emptyset$. Therefore, the singularity degree can be greater than 1 only when the intersection of the affine subspace $H$ and the second-order cone $\mathcal{K}$ is at the origin only. That is, the minimal face of $\mathcal{K}$ for $H\cap\mathcal{K}$ is $\{\bm{0}\}$. Therefore, it is enough to prove that the singularity degree is one when $\bm{u}_*=\bm{0}$ and $\|\bm{b}\|^2 < 1/2$. 

We find $\bm{y}\in\inte\mathcal{K}$ that satisfies $\bm{B}^T\bm{y}=\bm{0}$. Indeed, since  $\mathcal{K}^* = \mathcal{K}$ and $\mathcal{K}^\perp = \{\bm{0}\}$, this vector $\bm{y}$ corresponds to $\bm{y}$ in Theorem~\ref{thm:fr}. Moreover, the equality $\mathcal{K}\cap\{\bm{y}\}^\perp =\{\bm{0}\}$ holds, which implies that the singularity degree is one. 
    
    Define the matrix $\bm{C}\in\mathbb{R}^{(n+1)\times (n+1-p)}$ that satisfies $\bm{C}^T\bm{C} = \bm{I}_{n+1-p}$ and $\bm{B}^T\bm{C} = \bm{O}_{p\times (n+1-p)}$. In other words, the matrix $\begin{bmatrix}\bm{B} & \bm{C}\end{bmatrix}$ is orthonormal. In addition, let $\bm{c}$ be the first row of the matrix $\bm{C}$. We partition the matrix $\bm{C}$ as follows:
    \[
\bm{C} = \begin{bmatrix}
\bm{c}^T\\
\bm{C}_1
\end{bmatrix}.
    \]
    Then $\bm{y}:=\bm{C}\bm{c}$ satisfies  $\bm{B}^T\bm{y} = \bm{0}$.  Also, since we have $\|\bm{b}\|^2 + \|\bm{c}\|^2 = 1$, we obtain $\|\bm{c}\|^2 >1/2$. Thus by using $\bm{C}^T\bm{C}=\bm{I}_{n+1-p}$, we have 
    \[
    y_0^2 -\|\bm{y}_1\|^2 = \|\bm{c}\|^4 -\|\bm{C}_1\bm{c}\|^2 = \|\bm{c}\|^2(2\|\bm{c}\|^2 -1) >0, 
    \]
    which implies $\bm{y}=(y_0; \bm{y}_1)\in \inte\mathcal{K}$. 

\section{Proof of Lemma~\ref{lemma1}}\label{sec:proofOfLemma31}
\ref{L2} and \ref{L3} : Since this lemma can be proved in a similar manner of \cite[Corollary 3.2]{Ochiai2021}, we give a sketch of the proof.  

Define $g(x) = (1-(1-Cx^q +x^{q+1}h(x))^q)/(qCx^q)$. Then $g(0) = 1$ and 
\[
qCx_{k+1}^q \le qCx_k^q(1-Cx_k^q + x_{k}^{q+1}h(x_k))^q = qCx_k^q(1-qCx_k^qg(x_k)) \ (k\ge \tilde{k}). 
\]
Since $x_k>0$ and $x_k\to 0$ \ $(k\to\infty)$, there exists $\tilde{k}_0\in\mathbb{N}$ such that $1-qCx_k^qg(x_k)$ is positive for all $k\ge \tilde{k}_0$. 
Thus we have 
\[
\frac{1}{qCx_{k+1}^q}-\frac{1}{qCx_k^q}\ge\frac{g(x_k)}{1-Cx_k^qg(x_k)} 
\]
for all $k\ge \tilde{k}_1:=\max\{\tilde{k}, \tilde{k}_0\}$, which implies
\begin{align*}
\frac{1}{qCkx_k^q} - \frac{1}{qCkx_{K_1}^q} +\frac{1}{k}\sum_{\ell = 0}^{K_1-1} \frac{g(x_{\ell})}{1-Cx_{\ell}^qg(x_{\ell})}  \ge \frac{1}{k}\sum_{\ell = 0}^{k-1} \frac{g(x_{\ell})}{1-Cx_{\ell}^qg(x_k)}. 
\end{align*}
Since the summation in the right-hand side is a Ces{\'a}ro mean and $x_k\to 0$ \ $(k\to\infty)$, we obtain the inequality $\displaystyle\liminf_{k\to\infty}1/(qCkx_k^q)\ge 1$. Therefore, the inequality \eqref{ineqL2} holds. Similarly, we obtain the inequality \eqref{ineqL3}.

\ref{L4} : Define $g(x, w) = ((1-w^qf(x, w))^q-1)/w^q$. Then $g(x_\infty, 0) = -qf(x_\infty, 0)$ and 
\[
\frac{1}{w_{k+1}^q} - \frac{1}{w_k^q} = -\frac{g(x_k, w_k)}{(1-w_k^qf(x_k, w_k))^q} \ (k=0, 1, \ldots)
\]
Thus, we have 
\[
\frac{1}{kw_k^q} - \frac{1}{kw_0^q} = -\frac{1}{k} \sum_{\ell=0}^{k-1}\frac{g(x_\ell, w_\ell)}{(1-w_\ell^q f(x_\ell, w_\ell))^q}. 
\]
Since the summation in the right-hand side is a Ces{\'a}ro mean and $x_k\to x_\infty$ and $w_k\to 0$ \ $(k\to\infty)$, we obtain $\displaystyle\lim_{k\to\infty}1/(kw_k^{q}) = q f(x_\infty, 0)$. 

\ref{L5} : We have $x_\infty - x_k = \displaystyle\sum_{\ell = k}^{\infty} \frac{a_\ell}{\ell^2}$. It should be noted that $\displaystyle\frac{1}{k}\le \sum_{\ell=k}^\infty\frac{1}{\ell^2}\le \frac{1}{k-1}$. Thus, we obtain
\begin{align*}%\label{L4ineq1}
\frac{\inf\{a_i : i\ge k\}}{k}&\le\sum_{\ell=k}^\infty \frac{\inf\{a_i : i\ge k\}}{\ell^2}\le x_\infty -x_k, \\
%\label{L4ineq2}
x_\infty-x_k&\le \sum_{\ell=k}^\infty \frac{\sup\{a_i : i\ge k\}}{\ell^2}\le \frac{\sup\{a_i : i\ge k\}}{k-1}.
\end{align*}
Hence these inequalities imply
\[
\inf\{a_i : i\ge k\}\le k(x_\infty - x_k)\le \sup\{a_i : i\ge k\}\left(\frac{k}{k-1}\right).
\]
By taking the limit for the above inequalities, we obtain
\[
a_\infty = \liminf_{k\to\infty}a_k \le \liminf_{k\to\infty} k(x_\infty - x_k)\le \limsup_{k\to\infty} k(x_\infty - x_k) \le \limsup_{k\to\infty}a_k = a_\infty.
\]
Hence, we can conclude that $\displaystyle\lim_{k\to\infty}k(x_\infty -x_k) = a_\infty$. 

\section{Proof of Lemma~\ref{prop:updaterule}}\label{sec:proofOfprop}

Since $P_H(\bm{v}) = \bm{u}_*+\bm{B}\tilde{\bm{t}}$, the equality $(\bm{v}-\bm{u}_*-\bm{B}\tilde{\bm{t}})^T\bm{B} = \bm{0}$. It follows from the orthonormality of columns of $\bm{B}$ that $\tilde{\bm{t}} = \bm{B}^T(\bm{v}-\bm{u}_*)$. In addition, it follows from Lemma ~\ref{lemma:socp} that $\bm{v} = P_{\mathcal{K}}(\bm{u}) = \lambda_1\bm{e}_1$. 
Thus, we have $\tilde{\bm{t}} = \bm{B}^T(\lambda_1\bm{e}_1 - \bm{u}_*)$.

\section{Proof of Theorem~\ref{thm:example1}}\label{app:convrateEx1}

We analyze one iteration of APM. For this, we assume that APM updates $\tilde{\bm{t}} = (\tilde{x}, \tilde{y})$ from $\bm{t} = (x, y)$ with one iteration. If $y\neq 0$ and $x^2-(3y^2/2 + \sqrt{6}y) > 0$, then $\pm\bm{u}(\bm{t})\not\in \mathcal{K}$ and $\pm\bm{v}(\bm{t})\not\in\mathcal{K}$. The projection to $K$ is $(\lambda_1\bm{e}_1; \mu_1\bm{f}_1)$, where $\lambda_1$ and $\mu_1$ are the first spectral of $\bm{u}(\bm{t})$ and $\bm{v}(\bm{t})$, respectively. Also, $\bm{e}_1$ and $\bm{f}_1$ are the eigenvectors associated with $\lambda_1$ and $\mu_1$, respectively. 

We will prove that the convergence rate of the sequence $\{(X_k, Y_k)\}$ generated by the recurrence formula \eqref{recEx1} is $\Theta(k^{-1/6})$. For this, we define the two functions $S(X)$, $U(X)$ and the set $E(\delta)$ for a given $\delta >0$ as follows:
\begin{align*}
S(X) &:= \frac{X^2}{2} -\frac{3X^4}{8} + \frac{X^6}{2}, U(X) := X - \frac{X^7}{96}, \\
E(\delta) &:= \left\{
(X, Y)\in \mathbb{R}^2 : |Y- S(X)| < X^7, 0\le X\le \delta
\right\}.
\end{align*}

For simplicity, we define $X = x/\sqrt{3}$, $Y = y/\sqrt{6}$, $\tilde{X} =\tilde{x}/\sqrt{3}$ and $\tilde{Y} = \tilde{y}/\sqrt{6}$. If $Y\neq 0$ and $X^2 - (3Y^2+2Y) > 0$, then 
\begin{align}\label{recEx1}
\left\{
\begin{array}{lclcl}
\tilde{X} &=& f(X, Y) &:=& \frac{1}{6}\left(
3X-2 + \frac{2(X+1)^2+Y^2}{P(X, Y)} + \frac{(2Y+1)X}{Q(X, Y)}
\right), \\
\tilde{Y} &=& g(X, Y) &:=& \frac{1}{12}\left(
6Y-3 + \frac{(1+X)Y}{P(X, Y)} + \frac{2X^2+4Y^2+7Y+3}{Q(X, Y)}
\right), 
\end{array}
\right.
\end{align}
where $P(X, Y) = \sqrt{(X+1)^2 + Y^2}$ and $Q(X, Y) = \sqrt{X^2 + (Y+1)^2}$.

We choose $\delta >0$ such that $S(X)-X^7 >0$ for all $(X, Y)\in E(\delta)$. Then, we see that $Y > S(X)-X^7 > 0$. Thus, $\partial f/\partial Y >0$ and $\partial g/\partial Y >0$. Indeed, we have 
\begin{align*}
%\frac{\partial f}{\partial Y} &= \frac{1}{6}\left(
%\frac{Y^3}{((X+1)^2 + Y^2)^{3/2}} + \frac{X(2X^2 + Y+1)}{(X^2+(Y+1)^2)^{3/2}}
%\right), \\
%\frac{\partial g}{\partial Y} &= \frac{1}{12}\left(6+
%\frac{(X+1)^3}{((X+1)^2 + Y^2)^{3/2}} + \frac{X^2(6Y+5) + 4(Y+1)^3}{(X^2+(Y+1)^2)^{3/2}}
%\right).
\frac{\partial f}{\partial Y} &= \frac{1}{6}\left(
\frac{Y^3}{P(X, Y)^3} + \frac{X(2X^2 + Y+1)}{Q(X, Y)^3}
\right), \\
\frac{\partial g}{\partial Y} &= \frac{1}{12}\left(6+
\frac{(X+1)^3}{P(X, Y)^3} + \frac{X^2(6Y+5) + 4(Y+1)^3}{Q(X, Y)^3}
\right).
\end{align*}
Since $X\ge 0$ and  $Y>0$ for all $(X, Y)\in E(\delta)$, $\partial f/\partial Y, \partial g/\partial Y > 0$. Thus, functions $f(X, Y)$ and $g(X, Y)$ are monotonically increasing with respect to $Y$ for a fixed $X$. Moreover, by taking a sufficiently small $\delta>0$, $S(X)$ and $U(X)$ are monotonically non-decreasing for all $0\le X\le \delta$. Indeed, we have $S(0)=U(0) = 0$ and $S^\prime (X) = X + O(X^3)$ and $U^\prime(X) = 1 + O(X^6)$ around $X=0$. 

As in the following lemma, by choosing a more appropriate $\delta>0$, the set $E(\delta)$ satisfies the properties necessary to guarantee the convergence rate of the APM. Indeed, this lemma implies that the sequence $\{(X_k, Y_k)\}$ is generated by the recurrence formula \eqref{recEx1} for all $k$ if $(X_0, Y_0)\in E(\delta)$. 

\begin{lemma}\label{lemma:convrateEx1}
There exists $\delta >0$ such that for all $(X, Y)\in E(\delta)$ and $(\tilde{X}, \tilde{Y})$ generated by \eqref{recEx1}, the following hold:
\begin{align}
\label{ineq1} X^2 - 3Y^2 -2Y &>0, Y>0,  \\
%\label{ineq2} \partial f/\partial Y, \partial g/\partial Y &>0, \\
\label{ineq4} |\tilde{X}-U(X)| &< X^8, \\
\label{ineq5} |\tilde{Y} - S(U(X))| &< X^7, \\
\label{ineq3} (\tilde{X}, \tilde{Y}) &\in E(\delta).
\end{align}
\end{lemma}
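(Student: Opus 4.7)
The plan is to fix a small enough $\delta>0$ so that every claim reduces to a Taylor expansion around $X=0$ along the approximate invariant curve $Y=S(X)$, and then to propagate the bounds one step of \eqref{recEx1} at a time. The key observation is that $S$ and $U$ have been engineered so that, as a function of $X$ alone, $f(X,S(X))=U(X)+O(X^k)$ and $g(X,S(X))=S(U(X))+O(X^k)$ for some $k\ge 8$: one verifies this by substituting the polynomial $S(X)=X^2/2-3X^4/8+X^6/2$ into \eqref{recEx1}, expanding $P(X,S(X))$ and $Q(X,S(X))$ in powers of $X$, and comparing to $U(X)=X-X^7/96$. I would carry out this expansion as the first, and main, computational step; the high order of vanishing of the residuals is what makes the whole argument close.

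Next, I would establish \eqref{ineq1}. Writing $Y=S(X)+\varepsilon$ with $|\varepsilon|<X^7$, the Taylor expansion of $2S(X)+3S(X)^2$ gives $X^2-3S(X)^2-2S(X)=X^6/8+O(X^7)$, and the $\varepsilon$-correction is $O(X^7)$, so for $\delta$ small the quantity $X^2-3Y^2-2Y$ is strictly positive on $E(\delta)$. Similarly, $S(X)\sim X^2/2$ dominates $X^7$, so $Y>S(X)-X^7>0$ on $E(\delta)$ for $\delta$ small. Hence the recurrence formula \eqref{recEx1} is applicable and produces a well-defined $(\tilde X,\tilde Y)$.

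For \eqref{ineq4} and \eqref{ineq5} I would split
\begin{align*}
\tilde X - U(X) &= \bigl(f(X,Y)-f(X,S(X))\bigr) + \bigl(f(X,S(X))-U(X)\bigr), \\
\tilde Y - S(U(X)) &= \bigl(g(X,Y)-g(X,S(X))\bigr) + \bigl(g(X,S(X))-S(U(X))\bigr).
\end{align*}
The second piece in each line is the residual controlled by step one. For the first piece, the explicit formulas already given for $\partial f/\partial Y$ and $\partial g/\partial Y$ show that on $E(\delta)$ we have $\partial f/\partial Y=O(X)$ (the numerator of each summand vanishes at $X=Y=0$) while $\partial g/\partial Y=11/12+O(X)$. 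Combined with $|Y-S(X)|<X^7$, the mean-value estimate then yields $|f(X,Y)-f(X,S(X))|\le CX^8$ and $|g(X,Y)-g(X,S(X))|\le \tfrac{11}{12}X^7+CX^8$. For $\delta$ small the sum with the residual of order $X^k$ (where $k\ge 8$) is strictly bounded by $X^8$ and $X^7$ respectively, giving \eqref{ineq4} and \eqref{ineq5} with strict inequality.

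Finally, for \eqref{ineq3} I would use the triangle inequality
\[
|\tilde Y-S(\tilde X)|\le|\tilde Y-S(U(X))|+|S(U(X))-S(\tilde X)|.
\]
The first term is $<X^7$ by \eqref{ineq5}; the second is at most $\max_{\xi}|S'(\xi)|\cdot|\tilde X-U(X)|=O(X)\cdot O(X^8)=O(X^9)$ since $S'(\xi)=O(\xi)=O(X)$. At the same time $\tilde X^7=U(X)^7+O(X^{13})=X^7+O(X^{13})$. Because the leading contribution on the left is $\tfrac{11}{12}X^7$, strictly less than the $X^7$ leading term on the right, for $\delta$ small enough we obtain $|\tilde Y-S(\tilde X)|<\tilde X^7$. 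That $0\le \tilde X\le\delta$ follows from $\tilde X=U(X)+O(X^8)\le X\le\delta$ (and $\tilde X\ge 0$ for $\delta$ small since $U(X)\sim X$). Thus $(\tilde X,\tilde Y)\in E(\delta)$. The main obstacle throughout is the first step, the algebraic verification that $S$ and $U$ are invariant to high enough order; once that is in hand, the remaining estimates are routine consequences of monotonicity and mean-value bounds.
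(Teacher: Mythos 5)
Your overall strategy is the same as the paper's: establish \eqref{ineq1} by substituting $Y=S(X)\pm X^7$ and expanding in $X$, control the $Y$-dependence of $f$ and $g$ on the strip $E(\delta)$ via the explicit formulas for $\partial f/\partial Y$ and $\partial g/\partial Y$, and propagate membership in $E(\delta)$ by comparing $\tilde X^7$ with $X^7$. The paper organizes the middle step as a monotone sandwich $f(X,S(X)-X^7)\le f(X,Y)\le f(X,S(X)+X^7)$ (and likewise for $g$) and Taylor-expands the two envelope functions directly, whereas you split off the residual at $Y=S(X)$ and apply a mean-value bound in $Y$; these are interchangeable and require the same expansions.

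However, the claim you defer to ``step one'' is stated incorrectly for $g$, and it is precisely the fact the lemma hinges on. The residual $g(X,S(X))-S(U(X))$ is \emph{not} $O(X^k)$ for any $k\ge 8$: the paper's expansions give $g(X,S(X)+X^7)-S(U(X))=\tfrac{89}{96}X^7+O(X^8)$ and $g(X,S(X)-X^7)-S(U(X))=-\tfrac{87}{96}X^7+O(X^8)$, and since the $\pm X^7$ perturbation contributes $\pm\tfrac{11}{12}X^7=\pm\tfrac{88}{96}X^7$ to leading order, the residual is $\tfrac{1}{96}X^7+O(X^8)$ --- genuinely of order $X^7$. Your estimate $|\tilde Y-S(U(X))|\le\bigl(\tfrac{11}{12}+O(X)\bigr)X^7+|g(X,S(X))-S(U(X))|$ therefore closes only because $\tfrac{11}{12}+\tfrac{1}{96}=\tfrac{89}{96}<1$, a numerical margin of the construction that must actually be computed rather than inferred from a (false) high order of vanishing; had the residual coefficient exceeded $\tfrac{1}{12}$, your argument and the lemma would fail. (For $f$ the analogous residual is indeed $\tfrac{1}{384}X^8+O(X^9)$, so your claim is fine there.) The same $\tfrac{89}{96}<1$ margin is what rescues \eqref{ineq3} against $\tilde X^7=X^7+O(X^{13})$, which you do handle correctly. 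With the $g$-residual expanded to order $X^7$ and its coefficient checked, the rest of your argument is sound and matches the paper's.
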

\begin{proof}
Since we have $Y<S(X)+X^7$ for all $(X, Y)\in E(\delta)$,  
\[
X^2 - 3Y^2 -2Y > X^2 -3(S(X)+X^7)^2-2(S(X)+X^7) = \frac{X^6}{8} + O(X^7), 
\]
which implies $X^2-(3Y^2+2Y)>0$ for all $(X, Y)\in E(\delta)$ with a sufficiently small $\delta > 0$.

We define the functions $f_{\mathrm{L}}$, $f_{\mathrm{U}}$, $g_{\mathrm{L}}$ and $g_{\mathrm{U}}$ as follows:
\begin{align*}
f_{\mathrm{L}}(X) &:= f(X, S(X)-X^7), f_{\mathrm{U}}(X) :=f(X, S(X)+X^7), \\
g_{\mathrm{L}}(X) &:= g(X, S(X)-X^7), g_{\mathrm{U}}(X) :=g(X, S(X)+X^7).
\end{align*}
Since both function $f(X, Y)$ and $g(X, Y)$ are monotonically increasing with respect to $Y$ for a fixed $X$, we have 
\begin{align*}
f_{\mathrm{L}}(X)-U(X) &\le f(X, Y) - U(X)\le f_{\mathrm{U}}(X)-U(X), \\
g_{\mathrm{L}}(X)-S(U(X)) &\le g(X, Y) - S(U(X))\le g_{\mathrm{U}}(X)-S(U(X)). 
\end{align*}
Then these upper and lower bounds of $f(X, Y) - U(X)$ and $g(X, Y)-F(U(X))$ are 
\begin{align*}
f_{\mathrm{L}}(X)-U(X) &= -\frac{63X^8}{384} + O(X^9), f_{\mathrm{U}}(X)-U(X) = \frac{65X^8}{384} + O(X^9), \\
g_{\mathrm{L}}(X)-S(U(X)) &= -\frac{87X^7}{96} + O(X^8), g_{\mathrm{U}}(X)-S(U(X)) = \frac{89X^7}{96} + O(X^8). 
\end{align*}
Thus, there exists a sufficiently small $\delta > 0$ such that for all $(X, Y)\in E(\delta)$, 
\[
|f(X, Y) - U(X) | < X^8 \mbox{ and } |g(X, Y) - S(U(X))| < X^7.
\]

We prove that there exists a $\delta >0$ such that $(\tilde{X}, \tilde{Y})\in E(\delta)$ for all $(X, Y)\in E(\delta)$. First of all, since we can choose $\delta >0$ such that $U(X) - X^8 >0$ and $U(X) +X^8 < X$, it follows from the inequality $|f(X, Y)-U(X)|<X^8$ and the definition of $U$ that there exists a small $\delta >0$ such that $0\le \tilde{X}\le \delta$ for all $(X, Y)\in E(\delta)$. 

Since $d S/dX = X +O(X^3)$, $S(X)$ is monotonically increasing for $0\le X\le \delta$ with a sufficiently small $\delta > 0$. In addition, we have already seen that $g_{\mathrm{L}}(X)\le \tilde{Y}\le g_{\mathrm{U}}(X)$ and $U(X) -X^8\le \tilde{X}\le U(X)+X^8$. Thus we have
\begin{align*}
\tilde{Y}-S(\tilde{X}) &> g_{\mathrm{L}}(X) - S(U(X)+X^8) = -\frac{87X^7}{96} + O(X^8), \mbox{ and }\\
\tilde{Y}-S(\tilde{X}) &< g_{\mathrm{U}}(X) - S(U(X)-X^8) = \frac{89X^7}{96} + O(X^8). 
\end{align*}
Hence, there exists a sufficiently small $\delta>0$ such that $|\tilde{Y} - S(\tilde{X})| < X^7$ for all $(X, Y)\in E(\delta)$. 
\end{proof}

Theorem~\ref{thm:example1} follows from the next theorem.

\begin{theorem}\label{thm:ex41}
If $(X_0, Y_0)\in E(\delta)$, then the sequence $\{(X_k, Y_k)\}$ of the APM converges $(0, 0)$ with the rate $\Theta(k^{-1/6})$. 
\end{theorem}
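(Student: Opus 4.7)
My approach is to combine the structural bounds supplied by Lemma~\ref{lemma:convrateEx1} with the general contraction lemmas in Lemma~\ref{lemma1}. That lemma guarantees that $E(\delta)$ is forward-invariant, so by induction $(X_k, Y_k) \in E(\delta)$ for every $k$, and both $|X_{k+1} - U(X_k)| < X_k^8$ and $|Y_k - S(X_k)| < X_k^7$ hold throughout the iteration. This means the two coordinates decouple to leading order: the $X$-component evolves essentially on its own, and $Y_k$ is slaved to $X_k$ via the near-equality $Y_k \approx S(X_k)$.

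First I would analyze the $X$-component. Using $U(X) = X - X^7/96$ together with $|X_{k+1} - U(X_k)| < X_k^8$ yields the two-sided bound
\[
X_k\Bigl(1 - \tfrac{1}{96} X_k^6 - X_k^7\Bigr) < X_{k+1} < X_k\Bigl(1 - \tfrac{1}{96} X_k^6 + X_k^7\Bigr).
\]
Reading these as the hypotheses of parts \ref{L2} and \ref{L3} of Lemma~\ref{lemma1} with $C = 1/96$, $q = 6$, and $h \equiv \pm 1$ (a constant, hence convergent, power series), I obtain at once
\[
\limsup_{k\to\infty} (6/96)^{1/6} k^{1/6} X_k \le 1 \quad\text{and}\quad \liminf_{k\to\infty} (6/96)^{1/6} k^{1/6} X_k \ge 1,
\]
so $X_k = \Theta(k^{-1/6})$ and in particular $X_k \to 0$.

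For the $Y$-component, membership in $E(\delta)$ gives $|Y_k - S(X_k)| < X_k^7$, while the Taylor expansion $S(X) = X^2/2 + O(X^4)$ immediately shows $Y_k = X_k^2/2 + O(X_k^4) \to 0$; in particular $Y_k = \Theta(X_k^2) = \Theta(k^{-1/3})$. Combining both coordinates,
\[
\|(X_k, Y_k)\| = \sqrt{X_k^2 + Y_k^2} = X_k\sqrt{1 + O(X_k^2)} = \Theta(X_k) = \Theta(k^{-1/6}),
\]
which is exactly the stated rate, together with convergence to $(0, 0)$.

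The main obstacle I anticipate is checking the standing hypotheses of Lemma~\ref{lemma1}, namely $X_k > 0$ for all $k$ and $X_k \to 0$. Positivity follows from the lower bound $X_{k+1} > X_k(1 - X_k^6/96 - X_k^7) > 0$, which holds for small enough $\delta$ because $E(\delta) \subset \{X \ge 0\}$ and the preceding lemma can be strengthened to exclude $X_k = 0$ once the initial $X_0$ is positive (if $X_0 = 0$ then $Y_0 = 0$ too by the envelope condition and the iteration is already at the fixed point). The strict decrease $X_{k+1} < X_k$ comes from the matching upper bound, and the only fixed point of the limiting dynamics $X \mapsto X - X^7/96$ in $[0, \delta]$ is $0$, forcing $X_k \to 0$. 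Once these monotonicity checks are in place, the remaining argument is bookkeeping.
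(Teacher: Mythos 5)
Your proof is correct and follows essentially the same route as the paper: it combines the invariance and inequality \eqref{ineq4} from Lemma~\ref{lemma:convrateEx1} with parts \ref{L2} and \ref{L3} of Lemma~\ref{lemma1} (with $q=6$, $C=1/96$) to get $X_k=\Theta(k^{-1/6})$, and then reads off $Y_k=\Theta(k^{-1/3})$ from the defining inequality of $E(\delta)$, exactly as the paper does. The only (harmless) difference is that you derive $X_k\to 0$ directly from the monotone decrease of the recursion, whereas the paper simply invokes the general convergence property of the APM; also note that the strict inequality $|Y-S(X)|<X^7$ already forces $X>0$ on $E(\delta)$, so your worry about the case $X_0=0$ is vacuous.
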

\begin{proof}
It follows from the convergence property of APM that this sequence converges to $(0, 0)$. Moreover, it follows from \eqref{ineq4} that  
\[
\left|X_{k+1} - \left(X_k - \frac{X_k^7}{96}\right)\right| < X_k^8  \ (k=0, 1, \ldots).
\]
From Lemma~\ref{lemma1}, we obtain $X_k=\Theta(k^{-1/6})$. More precisely, we have $X_k \approx \sqrt[3]{4}k^{-1/6}$.  Also, it follows from the definition of $E(\delta)$ that we have 
\[
|Y_k - S(X_k)| < X_k^7 \ (k=0, 1, \ldots),
\]
which implies $Y_k = \Theta(k^{-1/3})$. Therefore the convergence rate of APM with $(X_0, Y_0)\in E(\delta)$ is $\Theta(k^{-1/6})$. 
\end{proof}

\section{Proof of Theorem~\ref{thm:example2}}\label{app:property}

For $(\bm{u}(\bm{t}); \bm{v}(\bm{t}))\in H$, we write the spectral decomposition of $\bm{u}(\bm{t})$ and $\bm{v}(\bm{t})$ as follows:
\[
\bm{u}(\bm{t}) = \lambda_1\bm{e}_1+\lambda_2\bm{e}_2, \bm{v}(\bm{t}) = \mu_1\bm{f}_1+\mu_2\bm{f}_2.
\]
For $\bm{t} = (x, y, z)$, we define $X=ax$, $Y=by$ and $Z=cz$. The projections of $(\bm{u}(\bm{t}); \bm{v}(\bm{t}))\in H$ to $K$ are as follows:
\begin{enumerate}[label=(Pr\arabic*)]
\item\label{Pr1} If $X\ge 0$ and $Z=0$, then $\bm{u}(\bm{t})\in\mathcal{K}$, and $\bm{v}(\bm{t}) \in \mathcal{K}$ or $\bm{v}(\bm{t}) \in -\mathcal{K}$. Thus, the projection is $(\bm{u}(\bm{t}), \bm{v}(\bm{t}))$ or $(\bm{u}(\bm{t}), \bm{0})$. Similarly, if $X<0$ and $Z=0$, then $\bm{u}(\bm{t}) \in -\mathcal{K}$ and, $\bm{v}(\bm{t}) \in \mathcal{K}$ or $\bm{v}(\bm{t}) \in -\mathcal{K}$. Thus, the projection is $(\bm{0}, \bm{v}(\bm{t}))$ or $(\bm{0}, \bm{0})$. Consequently, if $Z=0$, then we can conclude that the APM terminates in at most one iteration. 
\item\label{Pr3} If $Y\ge Z > 0$, then $\bm{v}(\bm{t})\in\mathcal{K}$. Thus the projection is $(\lambda_1\bm{e}_1; \bm{v}(\bm{t}))$.
\item\label{Pr4} If $Y\le Z < 0$, then $-\bm{v}(\bm{t})\in\mathcal{K}$. Thus the projection is $(\lambda_1\bm{e}_1; \bm{0})$.
\item\label{Pr5} Otherwise, the projection is $(\lambda_1\bm{e}_1; \mu_1\bm{f}_1)$.
\end{enumerate}

We analyze one iteration of APM. For this, we assume that APM updates $\tilde{\bm{t}} = (\tilde{x}, \tilde{y}, \tilde{z})$ from $\bm{t} = (x, y, z)$ with one iteration. For simplicity, we define $X = ax$, $Y = ay$, $Z=bz$, $\tilde{X} = a\tilde{x}$, $\tilde{Y} = a\tilde{y}$ and $\tilde{Z} = bz$. Also, we define $P(X, Z) = \sqrt{X^2 + 4Z^2}$ and $Q(Y, Z) = \sqrt{(Y-Z)^2 + 4Z^2}$. 

Then we obtain the following recurrence formulas for \ref{Pr3} to \ref{Pr5}: For all $(X, Y, Z)$, we have 
\begin{align}\label{X}
\tilde{X} &=\displaystyle\frac{1}{4}\left(X+P(X, Y)\right)\left(1+ \frac{X}{P(X, Z)}\right) = \frac{(X+P(X, Z))^2}{4P(X, Z)}.
\end{align}
\begin{description}
%\item[\ref{Pr1}] If $X\ge 0$ and $Y=Z$, then 
%\begin{align}
%\label{Proj1}
%    \left\{
%\begin{array}{cclcclccl}
%\tilde{X} &=&X, &\tilde{Y} &=& %\displaystyle\frac{Y-Z}{3} + \frac{1}{6}\left(Y+2Z + Q(Y, Z)\right)\left(1 + \frac{Y-Z}{Q(Y, Z)}\right), \\
%\displaystyle\frac{Y+|Y|}{2}, &\tilde{Z} &=& %\displaystyle -\frac{Y-Z}{15} + \frac{1}{30}\left(Y+2Z + Q(Y, Z)\right)\left(2 + \frac{10Z-Y}{Q(Y, Z)}\right). %, \\
%\displaystyle \frac{Z+|Z|}{2}.
%\tilde{W} &= &\displaystyle\frac{2W}{5} + \frac{1}{10}\left(
%W+3Z + Q(Y, Z)
%\right)\left(1 + \frac{2W-3Z}{Q(Y, Z)}\right). 
%\end{array}
%    \right.
%\end{align}
%\item[\ref{Pr2}] If $X< 0$ and $Y=Z$, then 
%\begin{align}
%\label{Proj2}
%    \left\{
%\begin{array}{cclcclccl}
%\tilde{X} &=&0,  &\tilde{Y} &=& %\displaystyle\frac{1}{6}\left(Y+2Z + Q(Y, %Z)\right)\left(1 + \frac{Y-Z}{Q(Y, Z)}\right), \\
%\displaystyle\frac{Y+|Y|}{2}, &\tilde{Z} &=& %\displaystyle\frac{1}{30}\left(Y+2Z + Q(Y, Z)\right)\left(2 + \frac{10Z-Y}{Q(Y, Z)}\right). %, \\
%\displaystyle\frac{Z+|Z|}{2}.
%\tilde{W} &=&\displaystyle\frac{1}{10}\left(
%W+3Z + Q(Y, Z)
%\right)\left(1 + \frac{2W-3Z}{Q(Y, Z)}\right).
%\end{array}
%    \right.
%\end{align}
\item[\ref{Pr3}] If $Y\ge Z>0$, then 
\begin{align}
\label{Proj3}
    \left\{
\begin{array}{ccl}
%\tilde{X} &=&\displaystyle\frac{1}{4}\left(X+P(X, Y)\right)\left(1+ \frac{X}{P(X, Z)}\right) = \frac{(X+P(X, Z))^2}{4P(X, Z)},  \\
\tilde{Y} &=& Y, \\ 
\tilde{Z}&=& \displaystyle \frac{3Z}{5} + \frac{1}{5}\left(X+P(X, Z)\right)\frac{Z}{P(X, Z)} = \frac{Z}{5}\left(4 + \frac{X}{P(X, Z)}\right).
\end{array}
    \right.
\end{align}
\item[\ref{Pr4}] If $Y\le Z < 0$, then 
\begin{align}
\label{Proj4}
    \left\{
\begin{array}{ccl}
%\tilde{X} &=&\displaystyle\frac{(X+P(X, Z))^2}{4P(X, Z)},  \\
\tilde{Y} &=& 0, \\
\tilde{Z}&=&\displaystyle\frac{1}{5}\left(X + P(X, Z)\right)\frac{Z}{P(X, Z)} = \frac{Z}{5}\left(1 + \frac{X}{P(X, Z)}\right). 
\end{array}
    \right.
\end{align}
\item[\ref{Pr5}] Otherwise, 
\begin{align}
\label{Proj5}
    \left\{
\begin{array}{ccl}
%\tilde{X} &=&\displaystyle\frac{(X+P(X, Z))^2}{4P(X, Z)},  \\
\tilde{Y} &=&\displaystyle\frac{1}{4}\left(
Y+Z+Q(Y, Z)\right)\left(1+\frac{Y-Z}{Q(Y, Z)}\right) = \frac{Y}{2}\left(1+\frac{Y-Z}{Q(Y, Z)}\right)+\frac{Z^2}{Q(Y, Z)}, \\
\tilde{Z} &=&\displaystyle\frac{1}{5}\left(X+P(X, Z)\right)\frac{Z}{P(X, Z)}+\displaystyle\frac{1}{20}\left(Y+Z +Q(Y, Z)\right)\left(1+\frac{5Z-Y}{Q(Y, Z)}\right)\\
&=&Z \displaystyle\left(\frac{1}{2} + \frac{X}{5P(X, Z)} + \frac{Y+5Z}{10Q(Y, Z)}\right).
\end{array}
    \right.
\end{align}
\end{description}

The transition of the three recurrence formulas can be clarified from the following lemma. %The figures \ref{fig:ex42-1} and \ref{fig:ex42-2} show the transition. 
\begin{lemma}\label{lemma:property}
The following hold:
\begin{enumerate}[label=(L\arabic*)]
\item \label{L0X} $\tilde{X}\ge X$ and $\tilde{X}\ge 0$. 
\item\label{L2Pr3} If $Y\ge Z > 0$, then $Z > \tilde{Z} > 0$ and $\tilde{Y}=Y>0$. 
\item\label{L3Pr4} If $Y\le Z < 0$, then $\tilde{Z} < 0$ and $\tilde{Y} = 0$. 
\item\label{L5Pr5} If $Y < Z$ and $Z> 0$, then $\tilde{Z} > 0$, $\tilde{Y} > 0$ and $\tilde{Y}> Y$. 
\item\label{L4Pr5} If $Y>Z$ and $Z< 0$,  then $\tilde{Y}>0>\tilde{Z}$ and $\tilde{Y} >Y$. \end{enumerate}
\end{lemma}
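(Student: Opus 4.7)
The plan is to verify each item by direct analysis of the closed-form expressions \eqref{X}, \eqref{Proj3}, \eqref{Proj4}, and \eqref{Proj5}. Throughout I will use the elementary bounds $P(X,Z)\ge|X|$ and $Q(Y,Z)\ge\max\{|Y-Z|,\,2|Z|\}$, both strict whenever $Z\neq 0$.

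The three easy items I would dispatch first. For \ref{L0X} I rewrite $\tilde{X}-X=\frac{(X+P)^{2}-4XP}{4P}=\frac{(X-P)^{2}}{4P}\ge 0$, and $\tilde{X}\ge 0$ follows from $X+P\ge 0$ (when $P=0$ one has $X=Z=0$, so $\tilde{X}=0=X$). For \ref{L2Pr3}, $\tilde{Y}=Y>0$ is immediate, and since $Z>0$ forces $|X/P|<1$, the bracket $4+X/P$ lies strictly in $(3,5)$, so $0<\tilde{Z}<Z$. For \ref{L3Pr4}, $\tilde{Y}=0$ is immediate, and $|X/P|<1$ gives $1+X/P>0$, so multiplying by $Z/5<0$ yields $\tilde{Z}<0$.

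The substantive content is in \ref{L5Pr5} and \ref{L4Pr5}. For $\tilde{Y}>0$, I would rearrange the formula in \eqref{Proj5} as $\tilde{Y}=\frac{(Y+Z+Q)(Y-Z+Q)}{4Q}$ and use the identities $Q^{2}-(Y+Z)^{2}=4Z(Z-Y)$ and $Q^{2}-(Y-Z)^{2}=4Z^{2}$ to show $Q>|Y\pm Z|$ (in \ref{L5Pr5}: $Z>0$ and $Z-Y>0$; in \ref{L4Pr5}: $Z<0$ and $Z-Y<0$, so the product $Z(Z-Y)$ is positive in both cases). Hence both numerator factors are strictly positive. To establish $\tilde{Y}>Y$, I compute $\tilde{Y}-Y=\frac{Y^{2}-YZ+2Z^{2}-YQ}{2Q}$; since $Y^{2}-YZ+2Z^{2}=(Y-Z/2)^{2}+7Z^{2}/4>0$, the case $Y\le 0$ is immediate, and for $Y>0$ I square and apply the factorization $(Y^{2}-YZ+2Z^{2})^{2}-Y^{2}Q^{2}=4Z^{3}(Z-Y)$, which is positive in both cases because $Z$ and $Z-Y$ share a sign (in \ref{L4Pr5}, $Y>0$ combined with $Z<0$ forces $Z-Y<0$, matching $Z<0$).

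The main obstacle is controlling the sign of $\tilde{Z}$ in Pr5. I would bound $\tilde{Z}/Z=\tfrac{1}{2}+\tfrac{X}{5P}+\tfrac{Y+5Z}{10Q}$ using $|X/P|<1$ (placing the middle term strictly above $-\tfrac{1}{5}$) together with the key identity $9Q^{2}-(Y+5Z)^{2}=4(2Y-5Z)(Y-Z)$. In \ref{L5Pr5}, both factors $2Y-5Z$ and $Y-Z$ are negative (the first because $Y<Z$ gives $2Y-5Z<2Z-5Z=-3Z<0$), so the product is positive, yielding $|Y+5Z|<3Q$ and placing the last term strictly above $-\tfrac{3}{10}$. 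In \ref{L4Pr5} both factors are positive ($Y>Z$ and $Z<0$ force $2Y-5Z>2Z-5Z=-3Z>0$), giving the same bound. In either case the total is strictly greater than $\tfrac{1}{2}-\tfrac{1}{5}-\tfrac{3}{10}=0$, so $\tilde{Z}$ shares the sign of $Z$. Locating the factorizations $9Q^{2}-(Y+5Z)^{2}=4(2Y-5Z)(Y-Z)$ and $(Y^{2}-YZ+2Z^{2})^{2}-Y^{2}Q^{2}=4Z^{3}(Z-Y)$ is the only non-routine step; once they are in hand, the rest is organized bookkeeping.
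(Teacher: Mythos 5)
Your proof is correct and follows essentially the same route as the paper: items (L1)--(L3) are handled identically, and for (L4)--(L5) both arguments come down to bounding $X/P$ and $(Y+5Z)/Q$ so that the bracket multiplying $Z$ is strictly positive, and to showing $Q>|Y\pm Z|$ from the sign of $Z(Z-Y)$. The only real divergence is cosmetic: where the paper factors $\tilde{Y}-Y$ into two manifestly positive factors and simply asserts the bounds $|Y+5Z|<3Q$, you rationalize and verify the polynomial identities $(Y^2-YZ+2Z^2)^2-Y^2Q^2=4Z^3(Z-Y)$ and $9Q^2-(Y+5Z)^2=4(2Y-5Z)(Y-Z)$ explicitly (both of which check out), so your write-up actually supplies justification for steps the paper leaves to the reader.
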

\begin{proof}
We use in this proof the following inequalities: for all $A, B\in\mathbb{R}$, 
\begin{align*}
|A/\sqrt{A^2+B^2}|&\le 1, \\
A+B + \sqrt{A^2+B^2} &\left\{
\begin{array}{ll}
> 0 & (A, B\ge 0 \mbox{ except for } A=B=0, \mbox{ or } AB < 0), \\
<0 & (A, B < 0). 
\end{array}
\right.
\end{align*}

\ref{L0X} : We consider the recurrence formula on $X$ in \eqref{X}. It is clear that $\tilde{X}\ge 0$. The inequality $\tilde{X}\ge X$ follows from 
\begin{align*}
%\tilde{X} &= \frac{1}{4}\left(2X + P(X, Z) + \frac{X^2}{P(X, Z)}\right)= \frac{1}{4}\frac{(X + P(X, Z))^2}{P(X, Z)} \ge 0, \\
\tilde{X}-X &= \frac{1}{4}\left(-2X + P(X, Z) + \frac{X^2}{P(X, Z)}\right)= \frac{(X - P(X, Z))^2}{4P(X, Z)}\ge 0. 
\end{align*}

\ref{L2Pr3} : Since $Z>0$, it is clear that $\tilde{Z} > 0$. In addition, the inequality $Z> \tilde{Z}$ follows from the inequality 
%\begin{align*}
%\tilde{Z} &= \frac{Z}{5}\left(4 + \frac{X}{P(X, Z)}\right) \ge 0, 
$Z - \tilde{Z}=\frac{Z}{5}\left(
1 - \frac{X}{P(X, Z)}
\right)\ge 0$. 
%\end{align*}

\ref{L3Pr4} : Since $Z<0$, we  can conclude $\tilde{Z} < 0$.

\ref{L5Pr5} : We have $Y+Z+Q(Y, Z) = Y -Z + 2Z + Q(Y, Z)$. Since $(Y-Z) Z < 0$, we have $Y+Z+Q(Y, Z) > 0$, which concludes $\tilde{Y}> 0$. In addition, since $-Y-Z + Q(Y, Z) =-(Y-Z)-2Z+Q(Y, Z)>0$ and $1-(Y-Z)/Q(Y, Z) > 0$, we have 
\[
\tilde{Y} - Y = \frac{1}{2}\left(
-Y-Z + Q(Y, Z)\right)\left(1 - \frac{Y-Z}{Q(Y, Z)}\right)> 0, 
\]
which implies $\tilde{Y}>Y$. In addition, using $Y-Z <0$ and $Z>0$, we obtain $|X/P(X, Z)| \le 1$ and $-1< (Y+5Z)/Q(Y, Z) < 3$. Consequently, we obtain
\[
\frac{Z}{5} = Z\left(\frac{1}{2}-\frac{1}{5}-\frac{1}{10}\right)< \tilde{Z} < Z\left(\frac{1}{2} + \frac{1}{5} + \frac{3}{10}\right) = Z, 
\]
which imply $\tilde{Z} > 0$. 

\ref{L4Pr5} :  Since $Z<0$ and $(Y-Z)Z< 0$, the inequalities $\tilde{Y} > Y$, $\tilde{Y}>0$ follows from the same proof in \ref{L5Pr5}. In addition, using $Y-Z >0$ and $Z<0$, we obtain $|X/P(X, Z)| \le 1$ and $-3< (Y+5Z)/Q(Y, Z) < 1$. Consequently, we obtain
\begin{align}\label{eqZ}
\frac{4Z}{5}=Z\left(\frac{1}{2} + \frac{1}{5} + \frac{1}{10}\right)< \tilde{Z} <  Z\left(\frac{1}{2}-\frac{1}{5}-\frac{3}{10}\right) = 0, 
\end{align}
which imply $\tilde{Z} < 0$. 
\end{proof}

We consider the sequence $\{(X_k, Y_k, Z_k)\}$ generated by these recurrence formulas \eqref{Proj3} to \eqref{Proj5}. It follows from the convergence property of APM that this sequence convergences $(X_\infty, Y_\infty, Z_\infty)$, where $X_\infty, Y_\infty\ge 0$, $Z_\infty= 0$. Lemma~\ref{lemma:property} implies that  it is sufficient to consider the sequence $\{(X_k, Y_k, Z_k)\}$ generated by \eqref{Proj3} or \eqref{Proj5}. Indeed, we observe following from Lemma \ref{lemma:property}: 
\begin{itemize}
\item \ref{L0X} implies that $\{X_k\}$ is the monotonically non-decreasing sequence and $X_k\ge 0$ for $k=1, 2, 3, \ldots$. In particular, if $Z_0\neq 0$, then $X_1 >0$. Since $\{X_k\}$ is the monotonically non-decreasing sequence,  we have $X_k>0$ for $k=1, 2, \ldots$. If $Z_0=0$, then APM finds a point in the intersection of $H$ and $K$ with at most one iteration. Furthermore, we see from \ref{L2Pr3} to \ref{L4Pr5} that $\tilde{Y}>0$ and $\tilde{Y} > Y$ hold for all cases. In other words, the sequence $\{Y_k\}$ generated by APM is monotonically increasing and $Y_k\ge 0$ for all $k=1, 2, \ldots$. 

\item \ref{L2Pr3} implies that if $Y_0\ge Z_0 > 0$, then the recurrence formula \eqref{Proj3} is applied for $(X_k, Y_k, Z_k)$ with $k=1, 2, \ldots$. \ref{L5Pr5} implies that if $Y_0 < Z_0$ and $Z_0 > 0$, then $Y_1 > 0$. Thus,  $Y_k > 0$ for all $k$ follows from \ref{L2Pr3} and \ref{L5Pr5}. In addition, $\{(X_k, Y_k, Z_k)\}$ converges $(X_\infty, Y_\infty, Z_\infty)$ with $X_\infty, Y_\infty\ge 0$, $Z_\infty=0$. In particular, $Y_\infty > 0$, and thus there exists $\tilde{k}\in\mathbb{N}$ such that $Y_k \ge  Z_k$ for all $k\ge \tilde{k}$. This implies that the sequence $\{(X_k, Y_k, Z_k)\}$ is generated by \eqref{Proj3} for all $k\ge \tilde{k}$. 
 
To prove the inequality $Y_\infty > 0$, we suppose that $Y_k < Z_k$ for all $k$. Then the sequence $\{(X_k, Y_k, Z_k)\}$ is generated by \eqref{Proj5} when $Y_0 < Z_0$ and $Z_0 > 0$. It follows from \ref{L5Pr5} of Lemma ~\ref{lemma:property} that the sequence $\{Y_k\}$ satisfies $Y_{k+1}> Y_k\ge  Y_1>0$. This implies that $Y_\infty >0$. In contrast, $Y_\infty \le 0$ follows from the assumption, which is a contradiction. Hence $Y_{\tilde{k}} \ge Z_{\tilde{k}}$ holds at some $\tilde{k}$. Consequently, it follows from \ref{L2Pr3} of Lemma~\ref{lemma:property} that  the sequence $\{(X_k, Y_k, Z_k)\}$ is generated by \eqref{Proj3} for all $k\ge \tilde{k}$.

\item \ref{L3Pr4} implies if $Y_0 \le Z_0 < 0$, then $Z_1<0$ and $Y_1=0$, and thus $(X_2, Y_2, Z_2)$ is generated by the recurrence formula \eqref{Proj5}. \ref{L4Pr5} implies that if $Y_0 > Z_0$ and $Z_0 < 0$, then the recurrence formula \eqref{Proj5} is applied for $(X_k, Y_k, Z_k)$ with $k=1, 2, \ldots$. 
It follows from \ref{L3Pr4} and \ref{L4Pr5} that if $Z_0 <0$, then $Z_k < 0 \le Y_k$ holds for all $k$ and that the recurrence formula \eqref{Proj5} is applied for $(X_k, Y_k, Z_k)$ with $k=2, 3,\ldots$. 

\end{itemize}

Figures \ref{fig:ex42-1} and \ref{fig:ex42-2} summarize the observations of Lemma~\ref{lemma:property}. Figure ~\ref{fig:ex42-1} displays that if the recurrence formula \eqref{Proj4} is applied for $(X_0, Y_0, Z_0)$, then the recurrence formula \eqref{Proj5} is applied for $(X_1, Y_1, Z_1)$. In addition, if $Y_0\ge Z_0 > 0$ and $Z_0 \le 0$, then the recurrence formula \eqref{Proj3} is applied after some iterations. 

Figure~\ref{fig:ex42-2} displays the recurrence formulas applied in the APM by using $(Y_0, Z_0)$. If $Y_0\ge  Z_0 > 0$, then the sequence of APM is generated by \eqref{Proj3} and converges to a point on the half-line $Y\ge 0$ and $Z = 0$. If $Y_0\le Z_0 < 0$, then the sequence is generated by \eqref{Proj5} after one iteration and converges to a point on the half-line. If $Y_0 < Z_0$ and $Z_0 <0$, then \eqref{Proj5} is applied for the first few iterations, and \eqref{Proj3} is applied after the iterations. If $Y_0\le Z_0 < 0$, then the sequence is generated by \eqref{Proj5} after one iteration and converges to a point on the half-line. 

\begin{figure}[ht]
\centering
%\begin{minipage}[b]{0.45\linewidth}
    \centering
    \begin{tikzpicture}[->,auto,node distance=2.5cm,thick, scale=0.50]
  % Define nodes
  %\node[circle,draw] (1) {\eqref{Proj1}};
  %\node[circle,draw,right of=1] (2) {\eqref{Proj2}};
  \node[circle,draw] (1) {(15A)};
  \node[circle,draw, right of=1] (2) {\eqref{Proj3}};
  \node[circle,draw, right of=2] (3) {\eqref{Proj4}};
  \node[circle,draw, right of=3] (4) {(15B)};

  % Add self-loops
  \path[every node/.style={sloped,anchor=south,auto=false}]
    (2) edge [loop above] node {} (2)
    (4) edge [loop above] node {} (4);

  % Add edges
  \path
    %(1) edge [bend left] node {} (5)
    %(2) edge [bend left] node {} (5)
    %(3) edge [bend left] node {} (4)
    (3) edge [bend left] node {} (4)
    (1) edge [bend left, dotted] node {} (2);

\end{tikzpicture}
\caption{Transition diagram of the recurrence formulas \eqref{Proj3}, \eqref{Proj4} and \eqref{Proj5}: (15A) means the recurrence formula \eqref{Proj5} with $Y_0 < Z_0$ and $Z_0>0$, and (15B) means \eqref{Proj5} with $Y_0 > Z_0$ and $Z_0<0$. The solid line from \eqref{Proj4} to (15B) means that the recurrence formula changes in one iteration. A dotted line  means that the recurrence formula changes in a few iterations. }
\label{fig:ex42-1}
  %\end{minipage}
  %\begin{minipage}[b]{0.45\linewidth}
\centering
\begin{tikzpicture}[scale=2.0]
  \filldraw[red!30] (1.2,1.2) -- (0,0) -- (1.2,0) -- (1.2, 1.2) --cycle;
  \filldraw[blue!30] (-1.2,-1.2) -- (0, 0) -- (-1.2, 0) -- (-1.2, -1.2) -- cycle;
  \filldraw[green!30] (1.2,1.2) -- (-1.2,1.2) -- (-1.2,0) -- (0, 0) -- cycle;
  \filldraw[green!30] (-1.2,-1.2) -- (0,0) -- (1.2,0) -- (1.2,-1.2) -- cycle;
  \node at (0.9, 0.3){\eqref{Proj3}};
  \node at (-0.9, -0.3){\eqref{Proj4}};
  \node at (-0.6, 0.6){(15A)};
  \node at (0.6, -0.6){(15B)};
  \node at (0.1, -0.1){$O$};
  \draw[->] (0, -1.2,0) -- (0, 1.2) node[above] {$Z$};
  \draw[ultra thick, ->] (0,0) -- (1.2, 0) node[right] {$Y$};
  \draw[very thick,dotted, -] (-1.2, 0) -- (0,0) node[above] {};
  %\draw[dashed] (-1.2,1.2) -- (1.2,-1.2);
\end{tikzpicture}
\caption{If the initial point $(Y_0, Z_0)$ is at the red area, then the recurrence formula \eqref{Proj3} is applied. If $(Y_0, Z_0)$ is at the blue area, then the recurrence formula \eqref{Proj4} is applied. If $(Y_0, Z_0)$ is on the dotted line, the next point is $(0, 0)$. Otherwise, the recurrence formula \eqref{Proj5} is applied. The definitions of (15A) and (15B) are provided in the caption of Figure~\ref{fig:ex42-1}.}
\label{fig:ex42-2}
  %\end{minipage}
\end{figure}

The following theorem shows the convergence property of APM for this example. Theorem~\ref{thm:example2} follows from this theorem. 
\begin{theorem}\label{thm:example2F}
Consider the sequence $\{(X_k, Y_k, Z_k)\}$ generated by  applying APM. Let $(X_\infty, Y_\infty, Z_\infty)$ be the convergence point of the sequence. Then, the following hold. 
\begin{enumerate}[label=(C\arabic*)]
\item\label{CC0} If $Z_0=0$, then $(X_0, Y_0, Z_0)\in H\cap K$ or $(X_1, Y_1, Z_1)\in H\cap K$.  
%\item\label{CC1} If $Y_0 >Z_0$ (i.e., $W_0 >0$), then there exists $\tilde{k}\in\mathbb{N}$ such that the sequence $\{(X_k, Y_k, Z_k)\}$ is generated by \eqref{Proj3} for all $k\ge \tilde{k}$, and thus we have $X_k-X_\infty = \Theta(k^{-1})$, $Y_k-Y_\infty = \Theta(k^{-1/2})$ and $Z_\infty - Z_k=\Theta(k^{-1/2})$. 
\item\label{CC1} If $Z_0 >0$, then there exists $\tilde{k}\in\mathbb{N}$ such that the sequence $\{(X_k, Y_k, Z_k)\}$ is generated by \eqref{Proj3} for all $k\ge \tilde{k}$, and thus we have $X_k-X_\infty = \Theta(k^{-1})$, $|Z_k| = \Theta(k^{-1/2})$ and  $Y_k$ is constant. 
%\item\label{CC2} If If $Y_0 < Z_0$ (i.e., $W_0 <0$), then there exists $\tilde{k}\in\mathbb{N}$ such that  the sequence $\{(X_k, Y_k, Z_k)\}$ is generated by \eqref{Proj5} for all $k\ge \tilde{k}$, and thus we have $X_k-X_\infty = O((4/5)^{4k})$, $Y_\infty-Y_k = O((4/5)^k)$ and $Z_k - Z_\infty=O((4/5)^k)$. 
\item\label{CC2} If $Z_0 <0$, then there exists $\tilde{k}\in\mathbb{N}$ such that  the sequence $\{(X_k, Y_k, Z_k)\}$ is generated by \eqref{Proj5} for all $k\ge \tilde{k}$, and thus we have $X_k-X_\infty = \Theta((4/5)^{4k})$, $|Z_k| = \Theta((4/5)^{k})$ and $Y_\infty -Y_k=\Theta((4/5)^{3k})$. 
\end{enumerate}
\end{theorem}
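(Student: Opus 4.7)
The plan is to exploit the case analysis already assembled from Lemma~\ref{lemma:property} and the discussion immediately preceding the theorem, which identifies a finite index $\tilde k$ after which exactly one of the recurrences \eqref{Proj3} (when $Z_0>0$) or \eqref{Proj5} (when $Z_0<0$) governs the iterates. Case \ref{CC0} is immediate from observation \ref{Pr1}. For the remaining cases I first verify that the limits $X_\infty$ and $Y_\infty$ entering the analysis are strictly positive: the identity $X_1 = (X_0+P(X_0,Z_0))^2/(4P(X_0,Z_0))>0$ whenever $Z_0\ne 0$ combined with the monotonicity $X_k\le X_{k+1}$ from \ref{L0X} gives $X_\infty \ge X_1 > 0$, and a parallel argument using \ref{L5Pr5}--\ref{L4Pr5} yields $Y_\infty \ge Y_1 > 0$ in case \ref{CC2}.

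For \ref{CC1}, the $Z$-update in \eqref{Proj3} rewrites, via $X/P(X,Z) = 1 - 4Z^2/(P(P+X))$, as
$$Z_{k+1} = Z_k\Bigl(1 - Z_k^{\,2}\cdot \tfrac{4}{5\,P_k(P_k+X_k)}\Bigr),$$
where $P_k := P(X_k,Z_k)\to X_\infty$. The coefficient $4/(5P_k(P_k+X_k))$ is continuous at the limit with value $2/(5X_\infty^2)>0$, so Lemma~\ref{lemma1}\ref{L4} with $q=2$ gives $kZ_k^2 \to 5X_\infty^2/4$, hence $|Z_k|=\Theta(k^{-1/2})$. Substituting into $X_{k+1}-X_k = (P_k-X_k)^2/(4P_k) = 4Z_k^{\,4}/(P_k(P_k+X_k)^2)$ writes $X_{k+1}-X_k = a_k/k^2$ with $a_k \to 25X_\infty/16 >0$, and Lemma~\ref{lemma1}\ref{L5} delivers $X_\infty - X_k = \Theta(k^{-1})$. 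Since $Y_k$ is constant for $k\ge \tilde k$ under \eqref{Proj3}, this closes \ref{CC1}.

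For \ref{CC2}, inspection of \eqref{Proj5} shows that as $Z_k\to 0^-$ with $X_k\to X_\infty>0$ and $Y_k\to Y_\infty>0$ one has $X/P(X,Z)\to 1$ and $(Y+5Z)/Q(Y,Z)\to 1$, so $Z_{k+1}/Z_k \to 1/2+1/5+1/10 = 4/5$, which upgrades to $|Z_k|=\Theta((4/5)^k)$ by a standard squeeze against ratios $4/5\pm\varepsilon$. A Taylor expansion $Q(Y,Z) = Y - Z + 2Z^2/Y + O(Z^3)$ substituted into the $Y$-update of \eqref{Proj5} gives
$$Y_{k+1} - Y_k = -\frac{Z_k^{\,3}}{Y_k^{\,2}} + O(Z_k^{\,4}),$$
which is positive because $Z_k<0$; summing the geometric tail with ratio $(4/5)^3$ produces $Y_\infty - Y_k = \Theta((4/5)^{3k})$. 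The analogous expansion $X_{k+1} - X_k = 4Z_k^{\,4}/(P_k(P_k+X_k)^2) \sim Z_k^{\,4}/X_\infty^3$ then gives $X_\infty - X_k = \Theta((4/5)^{4k})$ by summation. The main technical obstacle throughout is converting the pointwise Taylor asymptotics into uniform two-sided bounds along the orbit; this relies crucially on the positivity of $X_\infty$ and $Y_\infty$ established at the outset, which keeps $P_k$, $Q(Y_k,Z_k)$, and the leading coefficients $1/X_\infty^3$ and $1/Y_\infty^2$ bounded away from zero and ensures that both the geometric-ratio argument for $Z_k$ and the error terms in the $Y$- and $X$-expansions are well-controlled.
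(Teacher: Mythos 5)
Your treatment of \ref{CC0} and \ref{CC1} matches the paper's proof essentially step for step: the same rewriting $Z_{k+1}=Z_k\bigl(1-Z_k^2\cdot\tfrac{4}{5P_k(P_k+X_k)}\bigr)$, the same application of \ref{L4} of Lemma~\ref{lemma1} with $q=2$, and the same sequence $a_k=4k^2Z_k^4/(P_k(P_k+X_k)^2)$ fed into \ref{L5}. Those parts are correct.

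The gap is in \ref{CC2}, at the step ``$Z_{k+1}/Z_k\to 4/5$, which upgrades to $|Z_k|=\Theta((4/5)^k)$ by a standard squeeze against ratios $4/5\pm\varepsilon$.'' This inference is false: a ratio tending to $4/5$ only yields $|Z_k|=O((4/5+\varepsilon)^k)$ and $|Z_k|=\Omega((4/5-\varepsilon)^k)$ for every $\varepsilon>0$, which is strictly weaker than $\Theta((4/5)^k)$ (consider $Z_k=-(4/5)^k/k$, whose consecutive ratios tend to $4/5$). Since the claimed rates for $Y_\infty-Y_k$ and $X_\infty-X_k$ are obtained by summing powers of $|Z_k|$, the error propagates to all three estimates in \ref{CC2}. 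What is actually needed, and what the paper supplies, is convergence of the infinite product $\prod_{k}a_k$ where $Z_{k+1}=\tfrac{4}{5}a_kZ_k$: one first uses the exact inequality $0<a_k<1$ (from \eqref{eqZ}) to get the one-sided bound $|Z_k|\le(4/5)^k|Z_0|$, then writes $a_k=1-|Z_k|b_k$ with $\{b_k\}$ bounded, so that $\sum_k|Z_k|b_k<\infty$ and hence $\prod_k a_k$ converges to a positive limit, giving $Z_k(5/4)^k\to$ a nonzero constant. A similar refinement (the paper's sequences $u_\ell$, $v_\ell$ and the explicit evaluation of the geometric tails) is then required to turn the summation arguments for $Y$ and $X$ into genuine two-sided $\Theta$ bounds; positivity of $X_\infty$ and $Y_\infty$ alone, which you invoke as the key control, does not close this gap.
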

\begin{proof}%[Proof of Theorem~\ref{thm:example2F}] 
\ref{CC0}: We have already mentioned in \ref{Pr1} that this statement is true. 

\ref{CC1}: We have already mentioned that if $Z_0 >0$, then  there exists $\tilde{k}\in\mathbb{N}$ such that  $Y_k \ge Z_k > 0$ holds for all $k \ge \tilde{k}$, and thus $(X_k, Y_k, Z_k)$ is generated by the recurrence formula \eqref{Proj3} for all $k\ge \tilde{k}$. Consequently, we have $Y_{k+1} = Y_k$ for all $k\ge \tilde{k}$, and thus $Y_k = Y_{\tilde{k}}$.

For simplicity, we assume $\tilde{k}=0$. %We obtain the following recurrence formula on $Z_k$ from \eqref{Proj3}:
%\begin{align}\label{Wk}
%Z_{k+1} &=\frac{Z_k}{5}\left(4 + \frac{X_k}{P(X_k, Z_k)}\right) \ (k=0, 1, 2, \ldots).
%\end{align}

Define $f(x, z) := \displaystyle\frac{4}{5P(x, z)\left(x+P(x, z)\right)}$ and $P_k = P(X_k, Z_k)$. Then the recurrence formula on $Z_k$ in \eqref{Proj3} can be rewritten as 
\begin{align*}
Z_{k+1} &= Z_k\left(1-\frac{1}{5} + \frac{X_k}{5P_k}\right) = Z_k\left(1-\frac{P_k-X_k}{5P_k}\right) =Z_k\left(1-Z_k^2 f(X_k, Z_k)\right).
\end{align*}
It follows from \ref{L4} of Lemma~\ref{lemma1} that $\lim_{k\to\infty}1/(kZ_k^2) = 2f(X_\infty, 0) = 4/(5X_\infty^2)$, and thus $Z_k = \Theta(k^{-1/2})$. 

To see the convergence rate of $\{X_k\}$, we define the sequence $\{a_k\}$ with 
\[
a_k = \frac{4k^2 Z_k^4}{P_k (P_k +X_k)^2} \ (k=0, 1, \ldots). 
\]
Then we have 
\begin{align*}
X_{k+1} - X_k &= \frac{(P_k -X_k)^2}{4P_k}= \frac{4Z_k^4}{P_k (P_k +X_k)^2} = \frac{a_k}{k^2}. 
\end{align*}
Moreover, we have $\displaystyle\lim_{k\to\infty}a_k = 25X_\infty/16$. Thus, it follows from \ref{L5} of Lemma~\ref{lemma1} that $\displaystyle\lim_{k\to\infty}k(X_\infty-X_k) = 25X_\infty/16$ which implies $X_\infty - X_k = \Theta(k^{-1})$. 

\ref{CC2}: As we have already mentioned, if $Y_0\le Z_0 < 0$, then the sequence $\{(X_k, Y_k, Z_k)\}$ with $k=2, 3, \ldots$ is generated by the recurrence formula \eqref{Proj5}. Thus, we assume $Y_0 > Z_0$ and $Y_0< 0$ for simplicity.  

We define $Q_k = Q(Y_k, Z_k)$ and $a_k = \frac{5}{4}\left(\frac{1}{2} + \frac{X_k}{5P_k} + \frac{Y_k+5Z_k}{10Q_k}\right)$. Then, we have 
\begin{align}
\label{Wk0}Z_{k+1} &=%\frac{Z_k}{5}
%\left(1+\frac{X_k}{P_k}\right) + \frac{Z_k}{10}\left(
%3 + \frac{5Z_k+Y_k}{Q_k}
%\right) =
%Z_k\left(
%\frac{1}{2} + \frac{X_k}{5P_k} + \frac{5Z_k+Y_k}{10Q_k}
%\right) = 
\frac{4}{5}a_kZ_k. 
\end{align}
In addition, the inequality $0 < a_k <  1$ follows from  \eqref{eqZ}, $Y_k > Z_k$ and $Z_k < 0$. Hence, we obtain  $|Z_{k+1}|\le (4/5)|Z_k|$ for all $k=0, 1, \ldots$, and thus %$Z_k-Y_k = -W_k = O((4/5)^k)$. In particular,
$|Z_k| \le (4/5)^k|Z_0|$ for all $k=0, 1, \ldots$. 

Since we obtain $\displaystyle Z_k \left(\frac{5}{4}\right)^k = Z_0\prod_{\ell=0}^{k-1}a_{\ell}$ from \eqref{Wk0},  
%\[
%W_k \left(\frac{5}{4}\right)^k = W_0\prod_{\ell=0}^{k-1}a_{\ell}. 
%\]
we will prove the infinite product $\displaystyle\prod_{\ell=0}^{\infty}a_{\ell}$ converges. Then, we can conclude $|Z_k| = \Theta((4/5)^k)$. To this end, we consider the sequence $\{a_k\}$. It should be noted that we have $P_k-X_k = \frac{4Z_k^2}{P_k+X_k}$ and $Q_k-(Y_k-Z_k) = \frac{4Z_k^2}{Q_k+Y_k-Z_k}$. We define 
\[
b_k = \frac{|Z_k|}{P_k(P_k+X_k)} + \frac{|Z_k|}{2Q_k(Q_k+Y_k-Z_k)} + \frac{3}{4Q_k}.
\]
It should be noted that $b_k > 0$ for all $k$, and $\{b_k\}$ is bounded because the limit of $\{b_k\}$ is $3/(4Y_\infty)$. We can rewrite $a_k$ by $b_k$ and $Z_k$ as follows:
\begin{align*}
a_k &= \frac{5}{4}\left(
\frac{1}{2} + \frac{1}{5} -\frac{P_k-X_k}{5P_k} + \frac{1}{10} -\frac{Q_k-Y_k+Z_k}{10Q_k} + \frac{6Z_k}{10Q_k}
\right)\\
&=1 - \frac{P_k-X_k}{4P_k} - \frac{Q_k-Y_k+Z_k}{8Q_k} +\frac{3Z_k}{4Q_k}=1-|Z_k|b_k. 
\end{align*}
Since the infinite sum $\sum_{k=0}^\infty |Z_k|$ converges and $\{b_k\}$ is bounded, $\sum_{k=0}^\infty|Z_k|b_k$ converges, and thus $\prod_{k=0}^\infty a_k$ also converges. This implies $|Z_k| = \Theta((4/5)^{k})$.

We focus on $Y_k$. Using equations $Q_k-Y_k+Z_k = \frac{4Z_k^2}{Q_k+Y_k-Z_k}$,  we obtain
\begin{align*}
\nonumber Y_{k+1} %&= \frac{Y_k}{2} + \frac{Y_k^2-Y_kZ_k+2Z_k^2}{2Q_k} = \frac{Y_k}{2}\left(1 + \frac{Y_k-Z_k}{Q_k}\right) + \frac{Z_k^2}{Q_k}\\
&= \frac{Y_k}{2}\left(2 - \frac{Q_k-Y_k+Z_k}{Q_k}\right)+ \frac{Z_k^2}{Q_k} = Y_k + \frac{Z_k^2}{Q_k(Q_k+Y_k-Z_k)}\left(Q_k-Y_k-Z_k\right)\\
\nonumber&= Y_k - \frac{2Z_k^3}{Q_k(Q_k+Y_k-Z_k)} + \frac{4Z_k^4}{Q_k(Q_k+Y_k-Z_k)^2}. 
\end{align*}
From the last equality, we obtain
\[
Y_\infty - Y_k = \sum_{\ell = k}^\infty\frac{-2Z_\ell^3}{Q_\ell(Q_\ell+Y_\ell-Z_\ell)} + \sum_{\ell = k}^\infty\frac{4Z_\ell^4}{Q_\ell(Q_\ell+Y_\ell-Z_\ell)^2}. 
\]
We prove that the sequence $\{\displaystyle (5/4)^{3k}(Y_\infty-Y_k)\}$ converges to a nonzero constant. This implies $Y_\infty-Y_k = \Theta((4/5)^{3k})$. For this, we define $\displaystyle u_{\ell} = \frac{(5/4)^{3\ell}(-2Z_{\ell}^3)}{Q_\ell(Q_\ell + Y_\ell-Z_\ell)}$ and $\displaystyle v_{\ell} = \frac{4(5/4)^{4\ell}Z_\ell^{4}}{Q_\ell(Q_\ell+Y_\ell-Z_\ell)^2}$ for all $\ell$. We remark that the sequences $\{u_{\ell}\}$ and $\{v_{\ell}\}$ are convergent because we have already proved $|Z_k| = \Theta((4/5)^k)$. Let $u_\infty$ and $v_\infty$ be the limits of $\{u_\ell\}$ and $\{v_\ell\}$, respectively. In particular, $u_{\infty}\neq 0$. Then we have 
\begin{align*}
\left(\frac{5}{4}\right)^{3k}\left(Y_\infty-Y_k\right) &= \sum_{\ell = k}^\infty \left(\frac{4}{5}\right)^{3\ell-3k}u_{\ell} + \sum_{\ell = k}^\infty \left(\frac{4}{5}\right)^{4\ell-3k}v_{\ell}\\
&= \sum_{\ell=k}^\infty \left(\frac{4}{5}\right)^{3\ell-3k}u_\infty + \sum_{\ell=k}^\infty \left(\frac{4}{5}\right)^{3\ell-3k}(u_\ell - u_\infty) + \left(\frac{4}{5}\right)^{k}\sum_{\ell=k}^\infty\left(\frac{4}{5}\right)^{4\ell-4k}v_{\ell}\\
&=\frac{125}{61}u_\infty + \sum_{\ell=k}^\infty \left(\frac{4}{5}\right)^{3\ell-3k}(u_\ell - u_\infty) + \left(\frac{4}{5}\right)^{k}\sum_{\ell=k}^\infty\left(\frac{4}{5}\right)^{4\ell-4k}v_{\ell}.
\end{align*}
Since we have $u_\ell \to u_\infty$ and $v_\ell \to v_\infty$ $(\ell\to\infty)$, then the second and third terms in the above equation converge to $0$ as $k\to\infty$. Thus, we obtain the desired result on $Y_\infty-Y_k$. 

Finally, we can prove $X_\infty-X_k = \Theta((4/5)^{4k})$. Indeed, we have 
\[
X_{k+1}-X_k = \frac{Z_k^4}{4 P_k(P_k+X_K)^2}. 
\]
Hence we obtain
\begin{align*}
\left(\frac{5}{4}\right)^{4k}(X_\infty - X_k) &= \sum_{\ell=k}^\infty\frac{(5/4)^{4k}Z_\ell^4}{4 P_\ell(P_\ell+X_\ell)^2}= \sum_{\ell=k}^\infty\left(\frac{4}{5}\right)^{4\ell-4k}\frac{(5/4)^{4\ell}Z_\ell^{4}}{4P_\ell(P_\ell+X_\ell)^2}. 
\end{align*}
By applying a similar manner to the proof of the convergence of the sequence $\{(5/4)^{3k}(Y_\infty-Y_k)\}$, we can prove that the sequence $\{(5/4)^{4k}(X_\infty-X_k)\}$ converges to a nonzero constant. Therefore, we obtain $X_\infty-X_k = \Theta((4/5)^{4k})$
%Indeed,  we have
%\begin{align*}
%\frac{X_{k+1}}{X_k}&\le 1 + \frac{1}{16}\left(\frac{W_k}{X_k}\right)^4\le \exp\left(\frac{1}{16}\left(\frac{W_k}{X_k}\right)^4\right)\\
%& \le \exp\left(\frac{1}{16}\left(\frac{W_k}{X_2}\right)^4\right)\le %\exp\left(c\left(\frac{4}{5}\right)^{4k}\right) \ (k=2, 3, \ldots), 
%\end{align*}
%where $c = (Z_0-Y_0)^4/(16X^4_2)$. Thus we obtain
%\[
%X_k \le X_2\exp\left(c\sum_{j=2}^{k-1}\left(
%\frac{4}{5}
%\right)^{4j}\right) < X_2\hat{c}, 
%\]
%where $\hat{c} = \exp(625c/369)$. This inequality implies that $\{X_k\}$ has a convergent point $X_\infty \ge 0$. For $\ell \ge k$, we have 
%\[
%X_{\ell}\le X_k\exp\left(c\sum_{j=k}^{\ell-1} \left(\frac{4}{5}\right)^{4j}\right)\le X_k\exp\left(\frac{625c}{369}\left(\frac{4}{5}\right)^{4k}\right).
%\]
%Therefore we obtain
%\[
%X_\ell - X_k\le X_k\left(\exp\left(\frac{625c}{369}\left(\frac{4}{5}\right)^{4k}\right)-1\right)\le X_2 \hat{c}\left(\exp\left(\frac{625c}{369}\left(\frac{4}{5}\right)^{4k}\right)-1\right), 
%\]
%which implies $X_\infty - X_k = O((4/5)^{4k})$. 
\end{proof}
\begin{remark}
In the proof of \ref{CC1} in Theorem ~\ref{thm:example2F}, we obtained
\begin{align*}
\lim_{k\to\infty} k\left(X_\infty -X_k\right) &= \frac{25}{16}X_\infty \mbox{ and }
\lim_{k\to\infty} \sqrt{k}Z_k =\sqrt{\frac{5}{4}}X_\infty,  
\end{align*}
which provide more precise convergence rates of $\{X_k\}$ and $\{Z_k\}$. Indeed, we obtain $X_\infty-X_k\approx (25X_\infty/16)k^{-1}$ and $Z_k\approx\sqrt{5/4}X_\infty k^{-1/2}$. 

Similarly, we obtain more precise convergence rates of $\{X_k\}$, $\{Y_k\}$ and $\{Z_k\}$ in \ref{CC2} of  Theorem~\ref{thm:example2F}. Indeed, we define $C := -\lim_{k\to\infty}Z_k(5/4)^{k}$. Then, we have $u_\infty = C^3/Y_\infty^2$, and thus
\begin{align*}
\lim_{k\to\infty}\left(Y_\infty-Y_k\right)\left(\frac{5}{4}\right)^{3k}&= \frac{125C^3}{61Y_\infty^2} \mbox{ and }\lim_{k\to\infty}\left(X_\infty-X_k\right)\left(\frac{5}{4}\right)^{4k} = \frac{625C^4}{5904 X_\infty^3}. 
\end{align*}
Consequently, we obtain 
\[
X_\infty -X_k\approx\left(\frac{4}{5}\right)^{4k}\frac{625C^4}{5904 X_\infty^3}, Y_\infty -Y_k\approx \left(\frac{4}{5}\right)^{3k}\frac{125C^3}{61Y_\infty^2} \mbox{ and } Z_k \approx-C\left(\frac{4}{5}\right)^k. 
\]
\end{remark}

\end{document}